\newtheorem{theo}{Theorem}[section]
\newtheorem{lemma}[theo]{Lemma}
\newtheorem{definition}[theo]{Definition}
\newtheorem{prop}[theo]{Proposition}
\theoremstyle{definition}
\newtheorem{remark}[theo]{Remark}
\newtheorem*{example}{Example}
\newcommand{\N}{\mathbb{N}}
\newcommand{\R}{\mathbb{R}}
\newcommand{\eps}{\varepsilon}
\newcommand{\weakly}{\rightharpoonup}
\newcommand{\weakstar}{\stackrel{\ast}{\rightharpoonup}}
\newcommand{\dist}{{\rm dist}}
\def\hom{\mathop{\rm hom}}
\def\qc{\mathop{\rm qc}}
\def\sm{\mathop{\rm sym}}
\def\skw{\mathop{\rm skw}}
\def\lsc{\mathop{\rm lsc}}
\def\swlsc{\mathop{\rm swlsc}}
\def\i {\infty}
\def\sm{\mathop{\rm sym}}
\newcommand{\M}[2]{\R^{#1 \times #2}}
\def\Xint#1{\mathchoice
   {\XXint\displaystyle\textstyle{#1}}%
   {\XXint\textstyle\scriptstyle{#1}}%
   {\XXint\scriptstyle\scriptscriptstyle{#1}}%
   {\XXint\scriptscriptstyle\scriptscriptstyle{#1}}%
   \!\int}
\def\XXint#1#2#3{{\setbox0=\hbox{$#1{#2#3}{\int}$}
     \vcenter{\hbox{$#2#3$}}\kern-.5\wd0}}
\def\dashint{\Xint-}
\begin{document}

\begin{center}
\begin{Large}
Closure and commutability results for Gamma-limits and the geometric linearization and homogenization of multi-well energy functionals
\end{Large}
\end{center}

\begin{center}
\begin{large}
Martin Jesenko and Bernd Schmidt\\[0.2cm]
\end{large}
\begin{small}
Institut f{\"u}r Mathematik,\\ 
Universit{\"a}t Augsburg\\ 
86135 Augsburg, Germany\\ 
{\tt martin.jesenko@math.uni-augsburg.de}\\
{\tt bernd.schmidt@math.uni-augsburg.de}
\end{small}
\end{center}

\begin{abstract}
Under a suitable notion of equivalence of integral densities we prove a $\Gamma$-closure theorem for integral functionals: The limit of a sequence of $\Gamma$-convergent families of such functionals is again a $\Gamma$-convergent family. Its $\Gamma$-limit is the limit of the $\Gamma$-limits of the original problems. This result not only provides a common basic principle for a number of linearization and homogenization results in elasticity theory. It also allows for new applications as we exemplify by proving that geometric linearization and homogenization of multi-well energy functionals commute. 
\end{abstract}

\tableofcontents

\section{Introduction}

Over the last decades there has been an ever growing interest in devising effective theories for complex systems in the natural sciences and engineering. In many situations it is a major mathematical challenge to derive such a reduced theory in a rigorous way as a limiting theory of basic physical principles. In this paper we are interested in variational models and in particular in applications to elasticity theory. A classical and elementary example is the derivation of linearized elasticity from non-linear elasticity in the limit of small displacements through a simple Taylor expansion of the stored energy function. Yet, the mathematically rigorous derivation, which moreover guarantees that energy minimizers of the non-linear theory converge to energy minimizers in the linearized regime, is non-trivial and has only been solved comparatively recently by Dal Maso, Negri and Percivale \cite{DalMasoNegriPercivale}. With a view to the applications of our main abstract result, we also mention here the problem of finding geometrically linearized theories for energy functionals with multiple wells as studied by the second author in \cite{Schmidt}. An example of a complex multiscale system is a composite material with highly oscillatory material properties. Here an effective theory is provided by a homogenization procedure of the stored energy function of the material, rigorously derived by Braides \cite{Braides:85} and M{\"u}ller \cite{Mueller}. We will discuss such materials in detail below and in particular revisit the question if homogenization and linearization commute which has been addressed by M{\"u}ller and Neukamm \cite{MuellerNeukamm} and then by Gloria and Neukamm \cite{GloriaNeukamm}. 

The main aim of this paper is to prove an abstract `$\Gamma$-closure result' for sequences of $\Gamma$-convergent families: We show that under a suitable notion of equivalence of integral densities the limit of a $\Gamma$-convergent family of functionals is again a $\Gamma$-convergent family and the $\Gamma$-limit is given as a limit of the sequence of $\Gamma$-limits of the original problems. In this sense we in fact prove a stability result for $\Gamma$-convergent families. Our motivation comes from a homogenization closure theorem proved by Braides in \cite{Braides:86} (see also \cite{BraidesDefranceschi}), which enabled him to extend his homogenizability results to a class of almost periodic functions. Also the aforementioned theorem of Gloria and Neukamm \cite{GloriaNeukamm} is in fact a closure theorem in the special case of elastic energy functionals with quadratic expansion at the identity for small displacements.

The interest in such a $\Gamma$-closure theorem is twofold. Firstly, it sheds a new light to a number of problems that have been studied over the last years. In particular, we will see that the results on linearization in \cite{DalMasoNegriPercivale}, on geometric linearization in \cite{Schmidt} and on commutability in \cite{MuellerNeukamm,GloriaNeukamm} are direct consequences of our abstract theorem. Secondly, our general method also allows for new applications. As an example we focus here on composite materials with multiple energy wells in the limit of highly oscillatory mixtures and small displacements and show that homogenization and geometric linearization commute. However, while all these examples concern the behavior of elastic materials at small displacements, our general scheme is not restricted to applications in elasticity theory. In contrast, we presume that our results are of interest in a variety of different problems. Our $\Gamma$-closure theorems allow to analyze the effective properties of multiscale problems with two limiting parameters: They provide general criteria that guarantee commutability of the limits and stability results for simultaneous limits of the parameters. We believe that such results may be of particular interest, e.g., in numerical schemes for multiscale problems, where a thorough understanding of the interplay of the small model parameter and the length scale of the numerical discretization is crucial. 

The paper is organized as follows. In Section \ref{section:Gamma-closure} we prove our main abstract $\Gamma$-closure results for integral functionals first under standard growth assumptions on the densities in Theorems \ref{theo:Gamma-closure-single} and \ref{theo:Gamma-closure-variable}. These theorems generalize a homogenization closure result of Braides for quasiconvex integrands with standard growth, cf.\ \cite{Braides:86,BraidesDefranceschi}, to general integrands inducing $\Gamma$-convergent families. A key ingredient of our proof is an equiintegrability theorem by Fonseca, M{\"u}ller and Pedregal \cite{FonsecaMuellerPedregal}. Next we extend these results in our main Theorems \ref{theo:Gamma-closure-single-Garding} and \ref{theo:Gamma-closure-variable-Garding} to functionals satisfying a `$p$-G{\r a}rding type inequality' (see Definition \ref{definition:Garding}). This generalization is motivated by our applications to elasticity theory, where realistic models in the small strain limit are incompatible with the assumption of standard $p$-growth assumptions from below. In Theorem \ref{theo:Gamma-closure-boundary-values} we also consider these functionals with pre-assigned boundary values. 

In Section \ref{section:General-applications} we collect a number of immediate consequences of our $\Gamma$-closure theorems. Specializing to one parameter families, in Theorems \ref{theo:perturbation} and \ref{theo:relaxation} we first obtain a perturbation result for $\Gamma$-convergent functionals and a relaxation result for sequences of integral functionals. We then note in Theorems \ref{theo:commutability} and \ref{theo:simultaneous-limits} that under a suitable equivalence assumption on the densities the $\Gamma$-limits of a doubly indexed family of functionals commute and that in fact every diagonal sequence produces the same $\Gamma$-limit. Our findings are finally specialized to the problem of homogenizing integral functionals of G{\r a}rding type in Theorems \ref{theo:homogenization-closure} and \ref{theo:homogenization-commutability}. 

The last Section \ref{section:Elasticity} is devoted to applications in elasticity and in particular to the simultaneous homogenization and geometric linearization of multiwell energies. The geometric rigidity result of Friesecke, James and M{\"u}ller in \cite{FrieseckeJamesMueller} implies that stored energy functions in realistic models induce G{\r a}rding type energy functionals to which our theory developed in the previous sections applies. We thus see that indeed homogenization and geometric linearization commute. We formulate Theorem \ref{theo:multiwells} similarly as in \cite{Braides:86,GloriaNeukamm} in terms of general `homogenizable' densities and note that this includes, in particular, the case of periodic and the case of ergodic stochastic material mixtures. Moreover, we remark that all $\Gamma$-convergence statements can be complemented by observing compactness for finite energy sequences with pre-assigned boundary values, see Remark \ref{remark:muwe},\ref{rem:boundary-values-compactness}. Finally, we also note here that the (geometric) linearization and commutability results in \cite{DalMasoNegriPercivale,Schmidt,MuellerNeukamm,GloriaNeukamm} are a direct consequence of Theorem \ref{theo:multiwells}.

\section{A $\Gamma$-closure result}\label{section:Gamma-closure}

Let $\Omega$ be an open subset of $\R^{n}$. For a (doubly indexed) Borel function $f^{(j)}_{\eps} : \Omega \times \M{m}{n} \to \R$ bounded from below and $U \subset \Omega$ bounded and open we define the integral functional $F^{(j)}_{\eps}$ by 
\begin{align*}\label{eq:F-defi}
  F^{(j)}_{\eps}(u, U) 
  := \int_{ U } f^{(j)}_{\eps} (x, \nabla u(x)) \, dx 
\end{align*}
for $u \in W^{1, p}(\Omega; \R^m)$ and taking the value $+ \infty$ otherwise. If $\Omega$ itself is bounded we simply write $F^{(j)}_{\eps}(\cdot, \Omega) = F^{(j)}_{\eps}$. In view of our applications and also for notational clarity we choose the indices $j \in \N \cup \{\infty\}$ and $\eps$ as the elements of a positive null sequence or $0$. 

Our main aim is to provide a rather general set of conditions which in applications are easy to check that allow for a $\Gamma$-closure result of the following type: If the functionals $F^{(j)}_{\eps}$ $\Gamma$-converge as $\eps \to 0$ for each $j \in \N$ and the densities $f^{(j)}_{\eps}$ are close to $f^{(\infty)}_{\eps}$ for $\eps > 0$ and large $j$ in a suitable sense, then also the $\Gamma$-limit for $j = \infty$ exists and is given as a limit as $j \to \infty$ of the $\Gamma$-limits for finite $j$. 

\subsection{$\Gamma$-closure under standard growth assumptions}

As a first step, in this section we consider densities of standard $p$-growth. More precisely, we state the following 
\begin{definition}
\begin{itemize}
\item[(i)] We say that the families $((f^{(j)}_{\eps})_{\eps > 0})_{j \in \N}$ and $(f^{(\infty)}_{\eps})_{\eps > 0}$ are equivalent on $U \subset \Omega$ open, if  
\[ \lim_{j \to \infty} \limsup_{\eps \to 0} \int_{ U } 
   \sup_{|X| \le R} | f^{(j)}_{\eps}(x, X) - f^{(\infty)}_{\eps}(x , X) | \, dx = 0 \]
for every $R \ge 0$.

\item[(ii)] Recall that a family $f^{(j)}_{\eps} : \Omega \times \M{m}{n} \to \R$ of functions is said to uniformly fulfil a standard $p$-growth condition, $1 < p < \infty$, if there are $ \alpha, \beta > 0 $ independent of $j$ and $\eps$ such that
\begin{align*}
  \alpha |X|^{p} - \beta \le f^{(j)}_{\eps}(x,X) \le \beta( |X|^{p} + 1 ) 
\end{align*}
for almost all $x \in \Omega $ and all $X \in \M{m}{n}$. 
\end{itemize}
\end{definition}

\begin{theo}[$\Gamma$-closure on a single domain]\label{theo:Gamma-closure-single}
Let $\Omega \subset \R^n$ be bounded and open. Suppose that the family of Borel functions $f^{(j)}_{\eps} : \Omega \times \M{m}{n} \to \R$, $ j \in \N \cup \{ \infty \}$, $ \eps > 0$, uniformly fulfils a standard $p$-growth condition. Assume that
\begin{itemize} 
\item[(i)] for each $j < \infty$ the $\Gamma$-limit $\Gamma(L^p)\text{-}\lim_{\eps \to 0} F^{(j)}_{\eps} =: F^{(j)}_0$ exists and  

\item[(ii)] the families $((f^{(j)}_{\eps})_{\eps > 0})_{j \in \N}$ and $(f^{(\infty)}_{\eps})_{\eps > 0}$ are equivalent on $\Omega$. 
\end{itemize}

Then also $\Gamma(L^p)\text{-}\lim_{\eps \to 0} F^{(\infty)}_{\eps} =: F^{(\infty)}_0$ exists and is the pointwise and the $\Gamma$-limit of $F^{(j)}_0$ as $j \to \infty$: 
\[ F^{(\infty)}_0 
   = \lim_{j \to \infty} F^{(j)}_0 
   = \Gamma(L^p)\text{-}\lim_{j \to \infty} F^{(j)}_0. \] 
\end{theo}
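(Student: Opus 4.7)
The plan is to combine $\Gamma$-compactness with the Fonseca--M\"uller--Pedregal (FMP) equiintegrability theorem cited in the paper. By the uniform $p$-growth the family $(F^{(\infty)}_\eps)_{\eps>0}$ is equi-coercive in $L^p$, so every sequence $\eps_k\to 0$ admits a further $\Gamma$-convergent sub-subsequence. By Urysohn's principle it suffices to show that any such subsequential $\Gamma$-limit $G$ coincides pointwise with $\lim_j F^{(j)}_0$; note that along the chosen sub-subsequence the convergence $F^{(j)}_\eps\to_\Gamma F^{(j)}_0$ persists for every finite $j$.

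The crux is the following consequence of (ii) together with the two-sided $p$-growth: for any $w_\eps\to u$ in $L^p$ whose gradients $|\nabla w_\eps|^p$ are equiintegrable in $\eps$,
\[
  \lim_{j\to\infty}\limsup_{\eps\to 0}\bigl|F^{(j)}_\eps(w_\eps)-F^{(\infty)}_\eps(w_\eps)\bigr|=0.
\]
One splits the integrand $|f^{(j)}_\eps(\cdot,\nabla w_\eps)-f^{(\infty)}_\eps(\cdot,\nabla w_\eps)|$ on $\{|\nabla w_\eps|\le R\}$ (controlled by $\int_\Omega\sup_{|X|\le R}|f^{(j)}_\eps-f^{(\infty)}_\eps|$, whose $\limsup_\eps$ vanishes as $j\to\infty$ by (ii)) and on its complement (bounded by $2\beta\int_{\{|\nabla w_\eps|>R\}}(|\nabla w_\eps|^p+1)$, made small uniformly in $\eps$ by taking $R$ large, using equiintegrability and the $L^p$-bound on $\nabla w_\eps$). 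With this estimate at hand, the $\Gamma$-liminf half proceeds as follows: for $u_\eps\to u$ in $L^p$ with $\liminf_\eps F^{(\infty)}_\eps(u_\eps)<\infty$, the lower $p$-growth bounds $\nabla u_\eps$ in $L^p$, and FMP furnishes $v_\eps\to u$ with equiintegrable gradients and $F^{(\infty)}_\eps(v_\eps)\le F^{(\infty)}_\eps(u_\eps)+o(1)$ (using the upper growth together with $|\{u_\eps\ne v_\eps\}|\to 0$); combining the key estimate with the $\Gamma$-liminf inequality for $F^{(j)}$ and letting $j\to\infty$ yields $G(u)\ge\limsup_j F^{(j)}_0(u)$. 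For the $\Gamma$-limsup half, for each $j$ take a recovery sequence for $F^{(j)}_\eps$ at $u$ and apply FMP to obtain $v^{(j)}_\eps\to u$ with equiintegrable gradients and $\limsup_\eps F^{(j)}_\eps(v^{(j)}_\eps)\le F^{(j)}_0(u)$; the key estimate gives $\limsup_\eps F^{(\infty)}_\eps(v^{(j)}_\eps)\le F^{(j)}_0(u)+\omega(j)$, and a diagonal extraction produces $w_\eps\to u$ with $\limsup_\eps F^{(\infty)}_\eps(w_\eps)\le\liminf_j F^{(j)}_0(u)$, so $G(u)\le\liminf_j F^{(j)}_0(u)$. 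Chaining $\limsup_j F^{(j)}_0(u)\le G(u)\le\liminf_j F^{(j)}_0(u)$ forces all three to coincide with $F^{(\infty)}_0(u):=\lim_j F^{(j)}_0(u)$. The identity $F^{(\infty)}_0=\Gamma(L^p)\text{-}\lim_j F^{(j)}_0$ then follows from the constant sequence $u_j\equiv u$ (for the $\Gamma$-limsup) and a repetition of the FMP/key-estimate scheme with $j$ playing the role of $\eps$ (for the $\Gamma$-liminf).

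The main technical obstacle is applying the key estimate in the $\Gamma$-limsup step, where $v^{(j)}_\eps$ depends on $j$ and its equiintegrability modulus is not a priori uniform in $j$. The truncation level $R=R_j$ must grow to infinity while simultaneously $\limsup_\eps\int_\Omega\sup_{|X|\le R_j}|f^{(j)}_\eps-f^{(\infty)}_\eps|$ still tends to zero as $j\to\infty$. Since for each fixed $R$ this quantity vanishes by (ii), a standard diagonal construction (possibly after passing to a further subsequence in $\eps$) produces $R_j\to\infty$ along which both the equivalence modulus at $R_j$ and the equiintegrability tail on $\{|\nabla v^{(j)}_\eps|>R_j\}$ vanish in the limit.
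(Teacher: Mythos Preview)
Your overall strategy---Urysohn reduction to a subsequential $\Gamma$-limit, FMP equiintegrable modifications, and a truncation-plus-equivalence comparison---is exactly the paper's, and your ``$\Gamma$-liminf half'' is the paper's Step~1. The difference is in the other direction. The paper does \emph{not} take, for each $j$ separately, a recovery sequence for $F^{(j)}_\eps$ and then FMP-modify it. Instead it diagonalizes in $j$ first: it selects $j_k\to\infty$ so that $\limsup_\eps\int_\Omega\sup_{|X|\le k}|f^{(j_k)}_\eps-f^{(\infty)}_\eps|<1/k$, then chooses $\eps_{j_k}$ small and near-recovery points $w_{j_k}$ so that both the recovery error and the equivalence integral at level $k$ are $\le 2/k$, and only \emph{then} applies FMP once to the single sequence $(w_{j_k})_k$. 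This yields one equiintegrable family, the truncation level can be taken as any $M\le k_i$, and the comparison with $F^{(\infty)}_{\eps_{j_{k_i}}}$ goes through directly. Moreover, since the argument is run for an arbitrary sequence $u_j\to u$, the paper's Step~2 simultaneously establishes $\liminf_j F^{(j)}_0(u_j)\ge F^{(\infty)}_0(u)$, i.e.\ the $\Gamma$-liminf for $(F^{(j)}_0)_j$; your separate sketch for that part becomes unnecessary.

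Your organization creates the obstacle you flag, and the proposed resolution is not sound as stated. Having applied FMP for each fixed $j$, the equiintegrability modulus $\delta^{(j)}(R):=\sup_\eps\int_{\{|\nabla v^{(j)}_\eps|>R\}}(|\nabla v^{(j)}_\eps|^p+1)$ depends on $j$ in an uncontrolled way. A diagonal $R_j\to\infty$ with $\eta_j(R_j):=\limsup_\eps\int_\Omega\sup_{|X|\le R_j}|f^{(j)}_\eps-f^{(\infty)}_\eps|\to 0$ exists by (ii), but nothing forces $\delta^{(j)}(R_j)\to 0$ along the same choice: one can have, consistently with a uniform $L^p$ bound on the gradients, $\delta^{(j)}(R)\approx 1$ for $R<j$, so that any $R_j$ slow enough for the equivalence is too small for the tail. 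Passing to a further subsequence in $\eps$ does not help, since $\delta^{(j)}(R)$ is already a supremum over $\eps$. The clean fix is precisely the paper's: diagonalize in $j$ first, then FMP once. This also makes your final sentence superfluous---and note that ``repeating the key estimate with $j$ in the role of $\eps$'' cannot be taken literally, since no equivalence of the limiting densities $f^{(j)}_0$ is available; what is needed there is again the paper's Step~2 argument.
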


\begin{remark} 
\begin{enumerate} 
\item Note that by Theorem \ref{theo:standard-growth-Gamma-compactness} also the functionals $F^{(j)}_0$, $j \in \N \cup \{\infty\}$, are integral functionals with Carath\'{e}odory densities $f^{(j)}_0$ of standard $p$-growth. In fact, being densities of $\Gamma$-limits, the $f^{(j)}_0$ are quasiconvex in the second argument. 

\item Our definition of equivalence of the sequences $((f^{(j)}_{\eps})_{\eps > 0})_{j \in \N}$ and $(f^{(\infty)}_{\eps})_{\eps > 0}$ is motivated by the notion of equivalence in the homogenization closure theorem \ref{theo:homogenization-closure}, which has been introduced in \cite{Braides:86}. 
\end{enumerate}
\end{remark}

The previous result can be extended to functionals on variable domains in a straightforward manner: For $\Omega \subset \R^n$ open (not necessarily bounded), denote by ${\cal A}(\Omega)$ the set of bounded open subsets of $\Omega$ with Lipschitz boundary. 

\begin{theo}[$\Gamma$-closure on variable domains]\label{theo:Gamma-closure-variable}
Let $\Omega \subset \R^n$ be open. Suppose that the family of Borel functions $f^{(j)}_{\eps} : \Omega \times \M{m}{n} \to \R$, $ j \in \N \cup \{ \infty \}$, $ \eps > 0$, uniformly fulfils a standard $p$-growth condition. Assume that
\begin{itemize} 
\item[(i)] For each $j < \infty$ and $U \in {\cal A}(\Omega)$ the $\Gamma$-limit $\Gamma(L^p)\text{-}\lim_{\eps \to 0} F^{(j)}_{\eps}(\cdot, U)$ exists. 

\item[(ii)] The families $((f^{(j)}_{\eps})_{\eps > 0})_{j \in \N}$ and $(f^{(\infty)}_{\eps})_{\eps > 0}$ are equivalent on every $U \in {\cal A}(\Omega)$. 
\end{itemize}

Then for each $ j \in \N \cup \{ \i \} $ there exists a Carath{\'e}odory function $ f^{(j)}_{0} : \Omega \times \M{m}{n} \to \R $, uniquely determined a.e.\ on $\Omega$, 
such that for the corresponding integral functional $F^{(j)}_{0}$ it holds 
\begin{align*}
  \Gamma(L^p)\text{-}\lim_{\eps \to 0} F^{(j)}_{\eps}(\cdot, U)
   = F^{(j)}_0(\cdot, U)
\end{align*}
for all $U \in {\cal A}(\Omega)$. Moreover, for every $ u \in L^{p}( \Omega ; \R^{m} ) $
\[ F^{(\infty)}_0(u, U) 
   = \lim_{j \to \infty} F^{(j)}_0(u, U) 
   = \Gamma(L^p)\text{-}\lim_{j \to \infty} F^{(j)}_0(u, U) \] 
and the limiting densities $ f^{(j)}_{0} $ for all $X \in \M{m}{n}$ satisfy
\[ f^{(j)}_0(\cdot, X) \weakstar f^{(\infty)}_0 (\cdot, X) \quad \mbox{in } L^{\infty}(\Omega). \] 
\end{theo}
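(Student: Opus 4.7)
The plan is to localize Theorem \ref{theo:Gamma-closure-single} to each admissible subdomain, then paste the resulting densities into a single global object, and finally upgrade the integral convergence on subdomains to weak-$\ast$ convergence of densities. For any fixed $U \in {\cal A}(\Omega)$ the uniform $p$-growth and the equivalence hypothesis are immediately inherited on $U$, and assumption (i) provides $\Gamma(L^p)\text{-}\lim_{\eps \to 0} F^{(j)}_{\eps}(\cdot, U)$ for every $j < \infty$. Applying Theorem \ref{theo:Gamma-closure-single} with $\Omega$ replaced by $U$ yields the existence of $\Gamma(L^p)\text{-}\lim_{\eps \to 0} F^{(\infty)}_{\eps}(\cdot, U) =: F^{(\infty)}_0(\cdot, U)$, together with the identity
\[
  F^{(\infty)}_0(u, U) = \lim_{j \to \infty} F^{(j)}_0(u, U) = \Gamma(L^p)\text{-}\lim_{j \to \infty} F^{(j)}_0(u, U)
\]
for all $u \in L^p(U; \R^m)$ and every $U \in {\cal A}(\Omega)$.

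By the standard integral representation for $\Gamma$-limits of integrals with $p$-growth (Theorem \ref{theo:standard-growth-Gamma-compactness}), each $F^{(j)}_0(\cdot, U)$ is represented by a Carath\'eodory density $f^{(j)}_{0, U}: U \times \M{m}{n} \to \R$, quasiconvex in the gradient and of the same standard $p$-growth. Testing against the affine map $u_X(y) := Xy$ and invoking the Lebesgue differentiation theorem gives
\[
  f^{(j)}_{0, U}(x, X) = \lim_{r \to 0^+} \frac{F^{(j)}_0(u_X, B_r(x))}{|B_r(x)|}
\]
for a.e.\ $x \in U$ and every $X \in \M{m}{n}$. For $x$ in the interior of two subdomains $U, V \in {\cal A}(\Omega)$, the ball $B_r(x)$ lies in both for small $r$ and itself belongs to ${\cal A}(\Omega)$, so the right-hand side is intrinsic to $x$. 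Consequently $f^{(j)}_{0, U}$ and $f^{(j)}_{0, V}$ agree almost everywhere on overlaps, and exhausting $\Omega$ by an increasing sequence in ${\cal A}(\Omega)$ produces a single Carath\'eodory density $f^{(j)}_0 : \Omega \times \M{m}{n} \to \R$ (unique up to a nullset) that represents $F^{(j)}_0(\cdot, U)$ for every $U \in {\cal A}(\Omega)$.

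The inherited growth bound $|f^{(j)}_0(\cdot, X)| \le \beta(|X|^p + 1)$ a.e.\ shows $\{f^{(j)}_0(\cdot, X)\}_j$ is bounded in $L^{\infty}(\Omega)$. Specialising the convergence from the first paragraph to $u = u_X$ yields
\[
  \int_U f^{(j)}_0(x, X)\, dx = F^{(j)}_0(u_X, U) \longrightarrow F^{(\infty)}_0(u_X, U) = \int_U f^{(\infty)}_0(x, X)\, dx
\]
for every $U \in {\cal A}(\Omega)$. The linear span of $\{\chi_U : U \in {\cal A}(\Omega)\}$ is dense in $L^1(\Omega)$ by outer regularity of Lebesgue measure (any measurable set of finite measure is approximated in measure by bounded Lipschitz open subsets of $\Omega$), so combining this test-integral convergence with the uniform $L^{\infty}$-bound yields $f^{(j)}_0(\cdot, X) \weakstar f^{(\infty)}_0(\cdot, X)$ in $L^{\infty}(\Omega)$. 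The main subtlety I anticipate is the gluing step: the integral representation theorem only identifies densities modulo a nullset on each individual subdomain, so a canonical representative, provided here by the Lebesgue-point formula, is needed to ensure global consistency across $\Omega$.
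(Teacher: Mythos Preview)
Your proof is correct and follows essentially the same strategy as the paper: localize Theorem \ref{theo:Gamma-closure-single} to each $U \in {\cal A}(\Omega)$, represent the $\Gamma$-limits by integral densities, and then test against affine maps to pass from integral convergence on all $U$ (together with the uniform $L^\infty$-bound) to weak-$\ast$ convergence of the densities. The one place where you diverge slightly is the construction of the global density $f^{(j)}_0$: you obtain a density $f^{(j)}_{0,U}$ on each $U$ separately and then patch them via the Lebesgue-point formula, whereas the paper invokes Theorem \ref{theo:standard-growth-Gamma-compactness} directly on $\Omega$ to extract a subsequence along which the $\Gamma$-limit is an integral functional with a \emph{single} density $f^{(j)}_0$ valid for all $U \in {\cal A}(\Omega)$ at once; since the full $\Gamma$-limit is already known to exist on each $U$, this subsequential density represents it globally. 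The paper's route is shorter and sidesteps the gluing entirely, while your approach has the modest advantage of being more self-contained, as it only uses the integral representation locally and makes the consistency across subdomains explicit.
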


\begin{remark}
It is worth noting that for any family $((f^{(j)}_{\eps})_{ \eps>0 })_{j \in \N}$ there is always a subsequence $\eps_k$ of $\eps$ such that $\Gamma(L^p)\text{-}\lim_{ \eps \to 0 } F^{(j)}_{\eps}(\cdot, U)$ exists for all $j \in \N$ and $U \in {\cal A}( \Omega )$. This follows from Theorem \ref{theo:standard-growth-Gamma-compactness} in combination with a standard diagonal sequence argument. 
\end{remark}
\begin{figure}[h!]
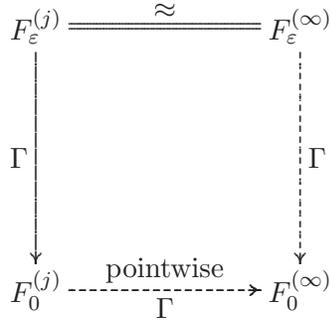

\null\hspace{5.3cm}
\begindc{\commdiag}[500]
\obj(0,2)[aa]{$ F^{(j)}_{ \eps } $}
\obj(2,2)[bb]{$ F^{(\infty)}_{ \eps } $}
\obj(0,0)[cc]{$ F^{(j)}_{0} $}
\obj(2,0)[dd]{$ F^{(\infty)}_{0} $}
\mor{aa}{bb}{$ \approx $}[\atleft, \equalline ]
\mor{bb}{dd}{$ \Gamma $}[\atleft,1]
\mor{aa}{cc}{$ \Gamma $}[\atright,0]
\mor{cc}{dd}{$ \Gamma $}[\atright,1]
\mor{cc}{dd}{pointwise}[\atleft,1]
\enddc
\caption{\label{fig:Gamma-closure}Schematically, our theorems can be summarized by the above diagram. The assumptions are given by solid lines, where in particular the equivalence of densities is indicated by a double line. The consequences of the theorems are given by dashed arrows. }
\end{figure}
\begin{proof}[Proof of Theorem \ref{theo:Gamma-closure-single}]
The proof is divided into three steps. First we assume that 
\[ \Gamma(L^p)\text{-}\lim_{\eps \to 0} F^{(\infty)}_{\eps} 
   =: F^{(\i)}_0 \]
exists and that $ F^{(\i)}_{0}(u) < \i $ if and only if $ u \in W^{1, p}( U ; \R^m ) $. This will be justified in Step 3.
\item
Step 1: Upper bound. For $u \in L^p(\Omega; \R^m)$ we claim that 
\begin{align}\label{eq:G-upperbound}
  \limsup_{j \to \infty} F^{(j)}_{0}(u) 
  \le F^{(\i)}_{0}(u). 
\end{align}
\item
This is obvious if $u \in L^p( \Omega ; \R^m) \setminus W^{1, p}( \Omega ; \R^m)$. 
If $ u \in W^{1,p}( \Omega ; \R^m)$ we choose a recovery sequence $ (u_{\eps})_{\eps} $ for $u$ with $F^{(\infty)}_{\eps}(u_{\eps}) \to F^{(\i)}_{0}(u)$. 
By equicoercivity it follows that $ ( u_{\eps} )_{\eps} $ is bounded in $W^{1, p}(\Omega; \R^m)$. Hence, by Lemma \ref{lemma:equi-int} there exists a subsequence $ ( u_{\eps_k} )_{ k \in \N } $ and functions $ v_{k} \in W^{1, p}(\Omega; \R^m) $ 
such that $ ( |\nabla v_{k}|^p )_{k \in \N} $ is equiintegrable on $\Omega$ and
\begin{align*} 
  |A_k| 
  \to 0 
  \mbox{ for } 
  A_k := \{ x \in \Omega : \nabla u_{\eps_k}(x) \ne \nabla v_{k}(x) \} \quad \mbox{and} \quad
  v_{k} 
  \weakly u \mbox{ in } W^{1,p}(\Omega; \R^m) 
\end{align*}
as $k \to \infty$. Setting $E^M_k := \{x \in \Omega : |\nabla v_{k}(x)| \ge M \}$ we obtain that 
\begin{align*}
  F^{(\infty)}_{\eps_k}(u_{\eps_k}) 
  &\ge \int_{\Omega \setminus (A_k \cup E^M_k)} 
    f^{(\infty)}_{\eps_k} (x, \nabla v_k(x)) \, dx 
    - \beta |A_k \cup E^M_k| \\ 
  &\ge \int_{\Omega \setminus E^M_k} 
    f^{(\infty)}_{\eps_k} (x, \nabla v_k(x)) \, dx 
    - |A_k \setminus E^M_k| \beta(M^p + 1) - \beta |A_k \cup E^M_k| \\ 
  &\ge \int_{\Omega \setminus E^M_k} f^{(j)}_{\eps_k} (x, \nabla v_k(x)) \, dx 
    - \int_{\Omega} \sup_{|X| \le M} |f^{(j)}_{\eps_k} (x, X) - f^{(\infty)}_{\eps_k} (x, X)| \, dx \\ 
  &\qquad - |A_k | \beta(M^p + 2) - \beta |E^M_k| \\
  &\ge F^{(j)}_{\eps_k}(v_k) 
    - \int_{E^M_k} \beta(|\nabla v_k(x)|^p + 1) \, dx 
   - \int_{\Omega} \sup_{|X| \le M} |f^{(j)}_{\eps_k} (x, X) - f^{(\infty)}_{\eps_k} (x, X)| \, dx \\ 
  &\qquad - |A_k | \beta(M^p + 2) - \beta |E^M_k|. 
\end{align*}
As $ ( |\nabla v_k|^p )_{ k \in \N } $ is equiintegrable, we may for given $\eta > 0$ choose $M$ so large that $\beta \int_{E^M_k} (|\nabla v_k|^p + 2) \, dx \le \eta$ for all $k$. 
Letting first $k \to \infty$ we find that 
\begin{align*}
  F^{(\i)}_{0}(u) 
  &= \lim_{k \to \infty} F^{(\infty)}_{\eps_k}( u_{\eps_k} ) \\ 
  &\ge \limsup_{k \to \infty} F^{(j)}_{\eps_k}(v_k) - \eta 
  - \limsup_{k \to \infty} \int_{\Omega} \sup_{|X| \le M} |f^{(j)}_{\eps_k} (x, X) - f^{(\infty)}_{\eps_k} (x, X)| \, dx. 
\end{align*}
Now let $j \to \infty$. Due to our equivalence assumption and since $\eta > 0$ was arbitrary, we indeed arrive at 
\begin{align*}
  F^{(\i)}_{0}(u) 
  \ge \limsup_{j \to \infty} \limsup_{k \to \infty} F^{(j)}_{\eps_k}(v_k) 
  \ge \limsup_{j \to \infty} F^{(j)}_{0}(u) 
\end{align*}
by the $\liminf$-inequality for $\Gamma(L^p)\text{-}\lim_{k \to \infty} F^{(j)}_{\eps_k} = F^{(j)}_{0}$.  
\item
Step 2: Lower bound. We claim that 
\begin{align}\label{eq:G-lowerbound}
  \liminf_{j \to \infty} F^{(j)}_{0}(u_j) 
  \ge F^{(\i)}_{0}(u) 
\end{align}
whenever $u_j \to u$ in $L^p(\Omega; \R^m)$. 
\item
To prove this, we pass to a subsequence $ (j_k)_{k \in \N} $ such that 
\[ \lim_{k \to \infty} F^{(j_k)}_{0}( u_{j_k} ) = \liminf_{j \to \infty} F^{(j)}_{0}(u_j) \]
and may without loss of generality assume that $ ( F^{(j_k)}_{0}( u_{j_k} ) )_{ k \in \N } $ is bounded and that $j_k$ is chosen so large that 
\[ \limsup_{\eps \to 0} 
   \int_{\Omega} \sup_{|X| \le k} |f^{(j_k)}_{\eps}(x, X) - f^{(\infty)}_{\eps}(x, X)| \, dx 
   < \tfrac{1}{k} .\] 
Then we choose $\eps_{j_k}$ (with $\eps_{j_k} \searrow 0$) so small that there is a $w_{j_k} \in L^p(\Omega; \R^m)$ with 
\[ \| w_{j_k} - u_{j_k} \|_{L^p} \le \tfrac{1}{j_k}
	\quad \mbox{and} \quad
   	F^{(j_k)}_{0}(u_{j_k}) + \tfrac{1}{j_k} 
   	\ge F^{(j_k)}_{\eps_{j_k}} (w_{j_k}) \]
as well as 
\begin{align*}
  \int_{\Omega} \sup_{|X| \le k} |f^{(j_k)}_{\eps_{j_k}} (x, X) - f^{(\infty)}_{\eps_{j_k}} (x, X)| \, dx
  &\le \tfrac{1}{k} + \limsup_{\eps \to 0} \int_{\Omega} 
  \sup_{|X| \le k} |f^{(j_k)}_{\eps} (x, X) - f^{(\infty)}_{\eps} (x, X)| \, dx 
  \le \tfrac{2}{k}.
\end{align*} 
By equicoercivity we may choose a further subsequence $ (j_{k_i} )_{ i \in \N } $ 
such that by Lemma \ref{lemma:equi-int} there are $ v_{i} \in W^{1, p}(\Omega; \R^m) $ 
with $|A_i| \to 0$ for $A_i := \{x \in \Omega : \nabla w_{j_{k_i}}(x) \ne \nabla v_{i}(x) \}$ and $v_{i} \weakly u$ in $W^{1, p}(\Omega; \R^m)$ such that $(| \nabla v_i|^p)_{ i \in \N } $ is equiintegrable. 
Then for $E^M_i := \{x \in \Omega : |\nabla v_i(x)| \ge M \}$ 
\begin{align*}
  F^{(j_{k_i})}_{\eps_{j_{k_i}}} (w_{j_{k_i}}) 
  &= \int_{\Omega} f^{(j_{k_i})}_{\eps_{j_{k_i}}} (x, \nabla w_{j_{k_i}}) \\ 
  &\ge \int_{\Omega \setminus (A_i \cup E^M_i)} f^{(j_{k_i})}_{\eps_{j_{k_i}}} (x, \nabla v_{i}(x)) \ dx 
   - \beta |A_i \cup E^M_i| \\ 
  &\ge \int_{\Omega \setminus E^M_i} f^{(j_{k_i})}_{\eps_{j_{k_i}}} (x, \nabla v_{i}(x)) \ dx 
   - |A_i \setminus E^M_i| \beta(M^p + 1) - \beta |A_i \cup E^M_i| \\ 
  &\ge \int_{\Omega \setminus E^M_i} f^{(\infty)}_{\eps_{j_{k_i}}} (x, \nabla v_{i}(x)) \ dx 
   - \int_{\Omega} \sup_{|X| \le M} |f^{(j_{k_i})}_{\eps_{j_{k_i}}} (x, X) - f^{(\infty)}_{\eps_{j_{k_i}}} (x, X)| \ dx \\ 
  &\qquad - |A_i | \beta(M^p + 2) - \beta |E^M_i| \\ 
  &\ge F^{(\infty)}_{\eps_{j_{k_i}}}(v_{i}) - \int_{E^M_i} \beta( | \nabla v_{i} |^p + 1 ) \ dx  
  - \tfrac{2}{k_i} - |A_i | \beta(M^p + 2) - \beta |E^M_i|, 
\end{align*}
if $k_i \ge M$. For given $\eta > 0$ we find as before by choosing $M$ large and then letting $i \to \infty$ 
\begin{align*}
  \lim_{i \to \infty} F^{(j_{k_i})}_{0}( u_{j_{k_i}} ) 
  \ge \limsup_{i \to \infty} F^{(j_{k_i})}_{\eps_{j_{k_{i}}}}( w_{j_{k_i}} ) 
  &\ge \limsup_{i \to \infty} F^{(\infty)}_{\eps_{j_{k_{i}}}}(v_{i}) - \eta 
\end{align*}
and so, since $v_i$ converges to $u$ in $ L^{p}( \Omega; \R^{m} ) $ and $\eta$ was arbitrary,  
\begin{align*} 
  \liminf_{j \to \infty} F^{(j)}_{0}( u_j ) 
  = \lim_{i \to \infty} F^{(j_{k_i})}_{0}( u_{j_{k_i}} ) 
  \ge F^{(\i)}_{0}(u)
\end{align*}
as claimed. 
\item
Combining \eqref{eq:G-upperbound} and \eqref{eq:G-lowerbound} with $u_j = u$ for all $j$ we arrive at 
\begin{align}\label{eq:GlimFj} 
  \lim_{j \to \infty} F^{(j)}_{0}(u) 
  = F^{(\i)}_{0}(u). 
\end{align}
\item
Step 3: Justification of our assumption.
\item
If we do not assume a priori that $ F^{(\infty)}_{\eps} $ $\Gamma$-converges to $ F^{(\i)}_{0} $, by Theorem \ref{theo:standard-growth-Gamma-compactness} for every subsequence $ ( \eps_k )_{k \in \N} $ there exists a further subsequence $ ( \eps_{k_i} )_{ i \in \N }$ such that 
\[ \Gamma(L^p)\text{-}\lim_{i \to \infty} F^{(\infty)}_{\eps_{k_i}}
   =: F^{(\infty)}_{0} \] 
exists with finite values precisely on $W^{1, p}(\Omega; \R^m)$. Proceeding as above we infer from \eqref{eq:GlimFj} that $F^{(\infty)}_{0}$ does not depend on the particular subsequence $ ( \eps_{k_i} )_{i \in \N} $. Employing the Urysohn property for $\Gamma$-limits we thus find that indeed $\Gamma(L^p)\text{-}\lim_{\eps \to 0} F^{(\infty)}_{\eps}
   = F^{(\infty)}_{0}.$
\end{proof}

\begin{proof}[Proof of Theorem \ref{theo:Gamma-closure-variable}]
Applying Theorem \ref{theo:Gamma-closure-single} to the functionals $F^{(j)}_{\eps}(\cdot, U)$ with $U \in {\cal A}(\Omega)$ fixed we find for each $j \in \N \cup\{\infty\}$ a uniquely determined functional $\Gamma(L^p)\text{-}\lim_{\eps \to 0} F^{(j)}_{\eps}(\cdot, U) =: G^{(j)}_U$. On the other hand, by Theorem \ref{theo:standard-growth-Gamma-compactness} for each $j$ there exists a subsequence $(\eps_k)_{k \in \N}$ such that  
\[ G^{(j)}_U 
   = \Gamma(L^p)\text{-}\lim_{k \to \infty} F^{(j)}_{\eps_k}(\cdot, U) 
   = F^{(j)}_0(\cdot, U) \quad \forall\, U \in {\cal A}(\Omega) \]
for an integral functional $F^{(j)}_0 : L^p(\Omega; \R^m) \times {\cal A}(\Omega) \to \R \cup \{\infty\}$ with  Carath\'{e}odory density $f^{(j)}_0$, whence $G^{(j)}_U = F^{(j)}_0(\cdot, U)$ for all $U$. By  Theorem \ref{theo:Gamma-closure-single} we indeed have 
\[ F^{(\infty)}_0(u, U) 
   = \lim_{j \to \infty} F^{(j)}_0(u, U) 
   = \Gamma(L^p)\text{-}\lim_{j \to \infty} F^{(j)}_0(u, U) \] 
for $ u \in L^{p}( \Omega ; \R^{m} )$ and $U \in {\cal A}(\Omega)$. 

Being continuous in the second argument, the densities $ f^{(j)}_0$ are uniquely determined almost everywhere on $\Omega$ and everywhere on $ \M{m}{n} $ by
\[ \int_U f^{(j)}_0(x, X) \, dx 
   = F^{(j)}_0(\ell_X, U) \quad \mbox{for every } U \in {\cal A}(\Omega), \] 
where $ \ell_X(x) := Xx$ with $X \in \M{m}{n}$. The pointwise convergence also yields for every $X \in \M{m}{n}$ 
\begin{align*}
  \lim_{j \to \infty} \int_U f^{(j)}_0(x, X) \, dx 
  = \lim_{j \to \infty} F^{(j)}_0( \ell_X , U) 
  = F^{(\infty)}_0( \ell_X , U) 
  = \int_{U} f^{(\infty)}_0(x, X) \, dx 
\end{align*}
for all $U \in {\cal A}(\Omega)$. Since $ ( f^{(j)}_0 (\cdot, X) )_{ j \in \N }$ is uniformly bounded 
it follows $f^{(j)}_0 (\cdot, X) \weakstar f^{(\infty)}_0(\cdot, X)$ in $L^{\infty}(\Omega)$. 
\end{proof}

\subsection{$\Gamma$-closure for G{\r a}rding type functionals}

For many interesting applications as, e.g., the ones to be discussed in Section \ref{section:Elasticity}, a two-sided $p$-growth assumption is too restrictive. In this section we generalize our $\Gamma$-closure theorem to integral functionals of `G{\r a}rding type'. More precisely, while imposing $p$-growth assumptions from above as before, the integral densities will only assumed to be lower bounded by some constant. Yet the functionals are still supposed to satisfy a weak coercivity assumption, which we impose by requiring a G{\r a}rding type inequality to hold. 

\begin{definition}\label{definition:Garding} 
We say that the family of integral functionals $F^{(j)}_{\eps}$, $j \in \N \cup \{\infty\}$, $\eps > 0$, with densities $f^{(j)}_{\eps} : \Omega \times \M{m}{n} \to \R$ is of uniform $p$-G{\r a}rding type on $U \subset \Omega$ open, $1 < p < \infty$, if there are  $ \alpha, \beta > 0 $ independent of $j$ and $\eps$ such that the $f^{(j)}_{\eps}$ satisfy 
\begin{align*}
  - \beta \le f^{(j)}_{\eps}(x,X) \le \beta( |X|^{p} + 1 ) 
\end{align*}
for almost all $x \in \Omega $ and all $X \in \M{m}{n}$ and moreover there are $\alpha_U, \gamma_U$ such that 
\[ F^{(j)}_{\eps}(u) 
   \ge \alpha_U \int_{U} |\nabla u(x)|^p \, dx - \gamma_U \int_{U} |u(x)|^p \, dx \]
for all $u \in W^{1, p}(U; \R^m)$. 
\end{definition}

\begin{theo}[$\Gamma$-closure on a single domain]\label{theo:Gamma-closure-single-Garding}
Let $\Omega \subset \R^n$ be bounded and open. Suppose that the family of functionals $F^{(j)}_{\eps}$, $ j \in \N \cup \{ \infty \}$, $ \eps > 0$, 
with densities $f^{(j)}_{\eps} : \Omega \times \M{m}{n} \to \R$ is of uniform $p$-G{\r a}rding type on $\Omega$. Assume that
\begin{itemize} 
\item[(i)] For each $j < \infty$ the $\Gamma$-limit $\Gamma(L^p)\text{-}\lim_{\eps \to 0} F^{(j)}_{\eps} =: F^{(j)}_0$ exists. 

\item[(ii)] The families $((f^{(j)}_{\eps})_{\eps > 0})_{j \in \N}$ and $(f^{(\infty)}_{\eps})_{\eps > 0}$ are equivalent on $\Omega$. 
\end{itemize}

Then also $\Gamma(L^p)\text{-}\lim_{\eps \to 0} F^{(\infty)}_{\eps} =: F^{(\infty)}_0$ exists and is the pointwise and the $\Gamma$-limit of $F^{(j)}_0$ as $j \to \infty$: 
\[ F^{(\infty)}_0 
   = \lim_{j \to \infty} F^{(j)}_0 
   = \Gamma(L^p)\text{-}\lim_{j \to \infty} F^{(j)}_0. \] 
\end{theo}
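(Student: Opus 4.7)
The plan is to follow the three-step structure of the proof of Theorem \ref{theo:Gamma-closure-single} almost verbatim, with changes confined to the points where the two-sided growth bound was previously exploited. The key observation is that in the inequality chains of Steps 1 and 2 only the pointwise bounds $-\beta \le f^{(j)}_\eps(x,X) \le \beta(|X|^p+1)$ actually enter: the lower bound $-\beta$ is used to discard integrals over small sets $A_k \cup E^M_k$, and the upper bound is used to estimate the truncation tail on $E^M_k$ and the swap tail on $A_k$. Both are available under the G\r{a}rding hypothesis, so the truncation/swapping computations transfer line by line. What genuinely changes is the way we obtain boundedness of the relevant gradient sequences in $W^{1,p}$, which in the standard growth case was immediate from the energy bound alone.

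For Step 1, given $u \in W^{1,p}(\Omega;\R^m)$, pick a recovery sequence $u_\eps \to u$ in $L^p$ with $F^{(\infty)}_\eps(u_\eps) \to F^{(\infty)}_0(u)$. The G\r{a}rding inequality applied on $\Omega$ yields
\[ \alpha_\Omega \int_\Omega |\nabla u_\eps|^p \, dx \le F^{(\infty)}_\eps(u_\eps) + \gamma_\Omega \int_\Omega |u_\eps|^p \, dx, \]
and the right-hand side is bounded because $u_\eps \to u$ in $L^p$, so $(u_\eps)$ is bounded in $W^{1,p}$. Lemma \ref{lemma:equi-int} then supplies the equiintegrable modifications $v_k$, and the chain of inequalities from the standard-growth proof, comparing $F^{(\infty)}_{\eps_k}(u_{\eps_k})$ with $F^{(j)}_{\eps_k}(v_k)$ via truncation at level $M$ and the equivalence of the densities, is reproduced unchanged. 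Sending $k \to \infty$, then $M \to \infty$ via equiintegrability, and finally $j \to \infty$ via the equivalence assumption yields $\limsup_j F^{(j)}_0(u) \le F^{(\infty)}_0(u)$.

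Step 2 proceeds symmetrically: given $u_j \to u$ in $L^p$, choose a subsequence $j_k$ realizing $\liminf_j F^{(j)}_0(u_j)$, with the extra property $\limsup_\eps \int_\Omega \sup_{|X|\le k}|f^{(j_k)}_\eps - f^{(\infty)}_\eps|\,dx < 1/k$, and almost-optimal $w_{j_k}$ at some $\eps_{j_k} \searrow 0$ with $\|w_{j_k} - u_{j_k}\|_{L^p} \le 1/j_k$. G\r{a}rding applied to $F^{(j_k)}_{\eps_{j_k}}(w_{j_k})$, together with $L^p$-boundedness of $(u_{j_k})$ and hence of $(w_{j_k})$, yields $W^{1,p}$-boundedness; the equiintegrable modification $v_i$ is produced as before, and the truncation-and-swap chain, now running from $f^{(j_{k_i})}_{\eps_{j_{k_i}}}$ to $f^{(\infty)}_{\eps_{j_{k_i}}}$, gives $\liminf_j F^{(j)}_0(u_j) \ge F^{(\infty)}_0(u)$ after sending $i \to \infty$ and $M \to \infty$.

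Step 3 justifies the existence of $F^{(\infty)}_0$ via the G\r{a}rding analogue of Theorem \ref{theo:standard-growth-Gamma-compactness}: any subsequence $\eps_k$ has a further subsequence along which $F^{(\infty)}_\eps$ $\Gamma$-converges to some limit finite precisely on $W^{1,p}(\Omega;\R^m)$; combining Steps 1 and 2 with $u_j \equiv u$ identifies the limit pointwise, hence independently of the subsequence, and the Urysohn property upgrades this to convergence of the whole family. The main obstacle I anticipate is ensuring that the G\r{a}rding inequality can be used effectively during the equiintegrable replacement: when $\nabla u_\eps$ is swapped for $\nabla v_k$ on a small set $A_k$, the local density can become very negative, but because the lower bound $-\beta$ is pointwise (not merely integrated) the term $-\beta|A_k \cup E^M_k|$ in the estimate still controls everything, which is exactly why the argument survives the relaxation from standard growth to G\r{a}rding type.
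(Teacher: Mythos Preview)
Your approach is correct but differs from the paper's. The paper does not rerun the proof of Theorem~\ref{theo:Gamma-closure-single}; instead it reduces to the standard-growth case via the ``addition trick'' of Proposition~\ref{prop:addition-trick}: one perturbs each density by $\frac{1}{k}|X|^p$, applies Theorem~\ref{theo:Gamma-closure-single} to the resulting standard-growth functionals $F^{(k,j)}_\eps$, and then sends $k\to\infty$ using the sandwich estimate~\eqref{eq:addition-trick-1}--\eqref{eq:addition-trick-2} together with monotonicity in $k$. Your direct adaptation works because, as you correctly identify, the inequality chains in the original Steps~1 and~2 use only the pointwise bounds $-\beta \le f \le \beta(|X|^p+1)$, and the G{\r a}rding inequality supplies the $W^{1,p}$-boundedness that standard growth previously gave. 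The paper's route is more modular (Theorem~\ref{theo:Gamma-closure-single} is reused as a black box, and the same trick drives the variable-domain version), while yours makes transparent exactly which ingredient of standard growth is dispensable. One small point: in your Step~3 you invoke a ``G{\r a}rding analogue'' of the $\Gamma$-compactness theorem that is not stated in the paper; you should spell out that general $\Gamma$-compactness on the separable space $L^p$ gives a $\Gamma$-convergent subsequence, and that the limit is finite precisely on $W^{1,p}$ by the upper growth bound (take the constant sequence) together with the G{\r a}rding inequality (a bounded-energy, $L^p$-convergent sequence is $W^{1,p}$-bounded, hence its limit lies in $W^{1,p}$).
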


\begin{remark}\label{remark:no-upper}
In fact, the assumption that the $f^{(j)}_{\eps}$ be bounded from above can be dropped. In order to see this, it suffices to combine Lemma \ref{lemma:equi-int} with \cite[Proposition A.1]{FrieseckeJamesMueller} so as to obtain approximations with uniformly bounded gradients in the proof of Theorem \ref{theo:Gamma-closure-single}. In general one then only has that $F^{(\infty)}_0 = \lim_{j \to \infty} F^{(j)}_0$ on $W^{1, \infty}(\Omega; \R^m)$. By density, however, this is enough to obtain that still $F^{(\infty)}_0 = \Gamma(L^p)\text{-}\lim_{j \to \infty} F^{(j)}_0$. Although of interest in models of elasticity theory, we do not pursue this line of thought here as in our main application to homogenization theory in Section \ref{section:Elasticity} the assumptions are only known to be satisfied under a standard $p$-growth assumption from above, cf.\ Remark \ref{remark:muwe}. 
\end{remark}

Again we also state a version of this result on variable domains as a corollary. Note that here the constants in the G{\r a}rding inequality are allowed to explicitly depend upon the domain $U \subset \Omega$. 
\begin{theo}[$\Gamma$-closure on variable domains]\label{theo:Gamma-closure-variable-Garding}
Let $\Omega \subset \R^n$ be open. Suppose that the family of functionals $F^{(j)}_{\eps}$, $ j \in \N \cup \{ \infty \}$, $ \eps > 0$, 
with densities $f^{(j)}_{\eps} : \Omega \times \M{m}{n} \to \R$ is of uniform $p$-G{\r a}rding type on every $U \in {\cal A}(\Omega)$. Assume that
\begin{itemize} 
\item[(i)] For each $j < \infty$ and $U \in {\cal A}(\Omega)$ the $\Gamma$-limit $\Gamma(L^p)\text{-}\lim_{\eps \to 0} F^{(j)}_{\eps}(\cdot, U)$ exists. 

\item[(ii)] The families $((f^{(j)}_{\eps})_{\eps > 0})_{j \in \N}$ and $(f^{(\infty)}_{\eps})_{\eps > 0}$ are equivalent on every $U \in {\cal A}(\Omega)$. 
\end{itemize}

Then for each $ j \in \N \cup \{ \i \} $ there exists a Carath{\'e}odory function $ f^{(j)}_{0} : \Omega \times \M{m}{n} \to \R $, uniquely determined a.e.\ on $\Omega$, 
such that for the corresponding integral functional $F^{(j)}_{0}$ it holds 
\begin{align*}
  \Gamma(L^p)\text{-}\lim_{\eps \to 0} F^{(j)}_{\eps}(\cdot, U)
   = F^{(j)}_0(\cdot, U)
\end{align*}
for all $U \in {\cal A}(\Omega)$. Moreover, for $ u \in L^{p}( \Omega ; \R^{m} ) $
\[ F^{(\infty)}_0(u, U) 
   = \lim_{j \to \infty} F^{(j)}_0(u, U) 
   = \Gamma(L^p)\text{-}\lim_{j \to \infty} F^{(j)}_0(u, U) \] 
and the limiting densities $ f^{(j)}_{0} $ for all $X \in \M{m}{n}$ satisfy
\[ f^{(j)}_0(\cdot, X) \weakstar f^{(\infty)}_0 (\cdot, X) \quad \mbox{in } L^{\infty}(\Omega). \] 
\end{theo}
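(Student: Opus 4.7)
The plan is to mimic the proof of Theorem~\ref{theo:Gamma-closure-variable} almost verbatim, with the role of Theorem~\ref{theo:Gamma-closure-single} taken over by its G{\r a}rding-type counterpart Theorem~\ref{theo:Gamma-closure-single-Garding}. First, for each fixed $U \in {\cal A}(\Omega)$ I would apply Theorem~\ref{theo:Gamma-closure-single-Garding} to the family $F^{(j)}_{\eps}(\cdot, U)$: on $U$ the uniform $p$-G{\r a}rding bounds, the existence of the $\Gamma$-limits for $j < \infty$, and the equivalence of densities all hold by assumption. This immediately yields the existence of $G^{(j)}_U := \Gamma(L^p)\text{-}\lim_{\eps \to 0} F^{(j)}_{\eps}(\cdot, U)$ for every $j \in \N \cup \{\infty\}$, together with the pointwise and $\Gamma$-limit identities $G^{(\infty)}_U = \lim_j G^{(j)}_U = \Gamma(L^p)\text{-}\lim_j G^{(j)}_U$ on each such $U$.

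Next I would upgrade this family of $\Gamma$-limits to integral functionals sharing a single Carath\'{e}odory density defined on all of $\Omega$. For each $j$, I would invoke a $\Gamma$-compactness and integral-representation result for G{\r a}rding-type functionals---the natural analogue of Theorem~\ref{theo:standard-growth-Gamma-compactness} in the present setting---combined with a diagonal argument over a countable dense subfamily of ${\cal A}(\Omega)$. This produces a subsequence $(\eps_k)$ along which $F^{(j)}_{\eps_k}(\cdot, U)$ simultaneously $\Gamma$-converges on every $U \in {\cal A}(\Omega)$ to $F^{(j)}_0(\cdot, U) = \int_U f^{(j)}_0(x, \nabla u(x))\,dx$ for a single Carath\'{e}odory integrand $f^{(j)}_0$. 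Since the full $\Gamma$-limit $G^{(j)}_U$ is already uniquely determined by Step~1, we must have $G^{(j)}_U = F^{(j)}_0(\cdot, U)$ for every $U$, independently of the chosen subsequence.

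The asserted identities for $F^{(\infty)}_0$ on each $U$ are then just a restatement of Step~1. For the weak-$*$ convergence of densities, I would argue as in the proof of Theorem~\ref{theo:Gamma-closure-variable}: being Carath\'{e}odory, $f^{(j)}_0$ is determined a.e.\ in $x$ and everywhere in $X$ by the values
\[ \int_U f^{(j)}_0(x, X)\,dx = F^{(j)}_0(\ell_X, U), \qquad U \in {\cal A}(\Omega), \ X \in \M{m}{n}, \]
with $\ell_X(x) = Xx$; the pointwise convergence from Step~1 then yields $\int_U f^{(j)}_0(\cdot, X)\,dx \to \int_U f^{(\infty)}_0(\cdot, X)\,dx$ for every $U \in {\cal A}(\Omega)$. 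The bounds $-\beta \le f^{(j)}_{\eps}(x, X) \le \beta(|X|^p+1)$ pass to the $\Gamma$-limit densities (by testing $F^{(j)}_0(\ell_X, U)$ against the corresponding upper and lower estimates and shrinking $U$ around a Lebesgue point), so $(f^{(j)}_0(\cdot, X))_j$ is uniformly bounded in $L^{\infty}(\Omega)$, and the convergence of integrals on the generating family ${\cal A}(\Omega)$ upgrades to weak-$*$ convergence in $L^{\infty}(\Omega)$.

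The main obstacle is Step~2, i.e.\ establishing the required $\Gamma$-compactness and integral-representation statement in the G{\r a}rding setting, where the integrand only has a constant lower bound rather than the standard $p$-coercive one. The uniform G{\r a}rding inequality combined with the upper $p$-growth bound does, however, give enough control of finite-energy sequences on each $U \in {\cal A}(\Omega)$ to run the usual localization and integral-representation machinery; once this compactness ingredient is in place, the remainder of the argument reduces to clean bookkeeping based on the single-domain theorem.
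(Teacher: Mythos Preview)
Your outline is sound in spirit, and Step~1 (applying Theorem~\ref{theo:Gamma-closure-single-Garding} on each fixed $U$) and the final density-convergence argument are fine. The genuine gap is your Step~2: you invoke ``a $\Gamma$-compactness and integral-representation result for G{\r a}rding-type functionals---the natural analogue of Theorem~\ref{theo:standard-growth-Gamma-compactness}'' and then say the usual localization machinery should go through. It does not, at least not directly: the standard compactness/representation theorem (and the fundamental estimate underlying it) uses the pointwise lower bound $\alpha|X|^p - \beta$ in an essential way, and a G{\r a}rding inequality is too weak a substitute to run that argument unchanged. You correctly flag this as the main obstacle, but you do not actually overcome it.

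The paper fills precisely this gap with a concrete device rather than an abstract analogue: it perturbs the densities to $f^{(k,j)}_{\eps}(x,X) := f^{(j)}_{\eps}(x,X) + \tfrac{1}{k}|X|^p$, which restores standard $p$-growth for each fixed $k$. Then Theorem~\ref{theo:Gamma-closure-variable} applies to the $(k,j)$-family, giving Carath\'{e}odory limit densities $f^{(k,j)}_0$ on all of $\Omega$, and Proposition~\ref{prop:addition-trick} shows that the unperturbed G{\r a}rding $\Gamma$-limit exists with Carath\'{e}odory density $f^{(j)}_0 = \inf_k f^{(k,j)}_0 = \lim_k f^{(k,j)}_0$. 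The same proposition supplies two-sided estimates (its displays \eqref{eq:addition-trick-1} and \eqref{eq:addition-trick-2}) that sandwich $F^{(j)}_0$ between $(1-\tfrac{1}{\alpha_U k})F^{(k,j)}_0 - \tfrac{\gamma_U}{\alpha_U k}\|u\|_{L^p}^p$ and $F^{(k,j)}_0$; passing $j\to\infty$ inside these and then $k\to\infty$ yields the pointwise and $\Gamma$-convergence of $F^{(j)}_0$ to $F^{(\infty)}_0$ directly, without ever needing a free-standing G{\r a}rding compactness theorem. Once you plug in this perturbation trick for your Step~2, your argument collapses into the paper's; conversely, without it (or a genuine proof of the G{\r a}rding-type representation theorem you postulate), your Step~2 is incomplete.
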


The following proposition not only is the first step towards the proofs of Theorems \ref{theo:Gamma-closure-single-Garding} and \ref{theo:Gamma-closure-variable-Garding}. It also provides a criterion for the $\Gamma$-convergence of G{\r a}rding type functionals and might thus be used to verify the assumption (i) in Theorems \ref{theo:Gamma-closure-single-Garding} and \ref{theo:Gamma-closure-variable-Garding} in particular situations. In particular, it shows that the assumptions of these theorems are always satisfied for suitable subsequences. 

\begin{prop}
\label{prop:addition-trick}
Let $\Omega \subset \R^n$ be open. Suppose that the family of functionals $F_{\eps}$, $ \eps > 0$, with densities $f_{\eps} : \Omega \times \M{m}{n} \to \R$ is of uniform $p$-G{\r a}rding type on every $U \in {\cal A}(\Omega)$. 
Let us take some null sequence $ \lambda_{k} \searrow 0 $, define 
\[ f_{\eps}^{ (k) }( x , X ) := f_{\eps}( x , X ) + \lambda_{k} |X|^{p} \] 
and by $F^{(k)}_{\eps}$ denote the corresponding integral functional. 
Assume that for every $k \in \N$ and all $U \in {\cal A}(\Omega)$ the $\Gamma$-limit $\Gamma(L^p)\text{-}\lim_{\eps \to 0} F^{(k)}_{\eps}(\cdot, U) =: F^{(k)}_0(\cdot, U)$ exists. 

Then also $\Gamma(L^p)\text{-}\lim_{\eps \to 0} F_{\eps}(\cdot, U) =: F_0(\cdot, U)$ exists and is given by 
\[ F_{0}(\cdot, U)
   = \inf_{ k \in \N } F_{0}^{(k)}(\cdot, U) 
   = \lim_{ k \to \infty } F_{0}^{(k)}(\cdot, U)
   = \Gamma(L^p)\text{-}\lim_{ k \to \infty } F_{0}^{(k)}(\cdot, U). \] 
Moreover, $F_0$ and $F^{(k)}_0$, $j \in \N$, are given in terms of Carath\'{e}odory integral densities $f_0$ and $f^{(k)}_0$, respectively, 
such that for a.e.\ $x \in \Omega$ and all $X \in \M{m}{n}$ 
\[ f_0(x, X) = \inf_{k \in \N} f_0^{(k)}(x, X) = \lim_{k \in \N} f_0^{(k)}(x, X). \] 
\end{prop}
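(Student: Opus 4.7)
The plan is to reduce the proposition to Theorem \ref{theo:Gamma-closure-variable-Garding} by taking the finite-index family there to be $F_\eps^{(j)} := F_\eps^{(k)}$ (for $j = k \in \N$) and the $\infty$-family to be $F_\eps^{(\infty)} := F_\eps$. Once the hypotheses are verified, that theorem produces $F_0$ as an integral functional with a Carath\'{e}odory density $f_0$, delivers the pointwise and $\Gamma(L^p)$-convergence $F_0^{(k)}(\cdot, U) \to F_0(\cdot, U)$, and yields $f_0^{(k)}(\cdot, X) \weakstar f_0(\cdot, X)$ in $L^\infty(\Omega)$ for every $X$. It then only remains to upgrade these limits to infima at both the functional and the density level.

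To verify the hypotheses, we may assume without loss of generality $\lambda_k \le \lambda_1$. Since $\lambda_k |X|^p \ge 0$ we have
\[ -\beta \le f_\eps^{(k)}(x, X) \le (\beta + \lambda_1)(|X|^p + 1) \]
uniformly in $k$ and $\eps$, and the $p$-G{\r a}rding coercivity on each $U \in {\cal A}(\Omega)$ is inherited from $F_\eps$ since $F_\eps^{(k)} \ge F_\eps$. Equivalence on any $U \in {\cal A}(\Omega)$ is immediate from
\[ \int_U \sup_{|X| \le R} | f_\eps^{(k)}(x, X) - f_\eps(x, X) | \, dx = \lambda_k R^p |U| \to 0 \quad \text{as } k \to \infty, \]
uniformly in $\eps$, and condition (i) of Theorem \ref{theo:Gamma-closure-variable-Garding} is precisely the standing hypothesis on the $F_\eps^{(k)}$.

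For the infimum identification, the monotonicity $\lambda_{k+1} \le \lambda_k$ gives $F_\eps^{(k+1)} \le F_\eps^{(k)}$ pointwise; inserting a recovery sequence $u_\eps \to u$ for $F_\eps^{(k)}(\cdot, U) \to F_0^{(k)}(u, U)$ into the $\Gamma$-$\limsup$ inequality for $F_\eps^{(k+1)}(\cdot, U)$ then gives $F_0^{(k+1)}(u, U) \le F_0^{(k)}(u, U)$, so $\inf_k F_0^{(k)} = \lim_k F_0^{(k)} = F_0$. At the density level, specializing to $u = \ell_X$ shows that $\int_U f_0^{(k)}(x, X) \, dx$ is decreasing in $k$ for every fixed $X$ and every $U \in {\cal A}(\Omega)$; Lebesgue differentiation then yields the a.e.\ monotonicity $f_0^{(k+1)}(\cdot, X) \le f_0^{(k)}(\cdot, X)$ first for $X$ in a countable dense subset of $\M{m}{n}$, and by continuity of Carath\'{e}odory densities in the second variable for every $X$. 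The uniformly bounded monotone sequence $f_0^{(k)}(\cdot, X)$ therefore converges in $L^1_{\rm loc}(\Omega)$ to $\inf_k f_0^{(k)}(\cdot, X)$, which by uniqueness of limits must agree a.e.\ with the weak-$*$ limit $f_0(\cdot, X)$ provided by Theorem \ref{theo:Gamma-closure-variable-Garding}.

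The bulk of the argument is routine bookkeeping; the only genuinely delicate step is bridging the weak-$*$ convergence (a statement tested against $L^1$ functions) with the desired pointwise a.e.\ identity $f_0 = \inf_k f_0^{(k)}$, which is handled by the two-step extraction of a.e.\ monotonicity from the integrated monotonicity of the $F_0^{(k)}$.
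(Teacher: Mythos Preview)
Your reduction to Theorem \ref{theo:Gamma-closure-variable-Garding} is circular in the logical structure of the paper: Proposition \ref{prop:addition-trick} is explicitly introduced as ``the first step towards the proofs of Theorems \ref{theo:Gamma-closure-single-Garding} and \ref{theo:Gamma-closure-variable-Garding}'', and the paper's proof of Theorem \ref{theo:Gamma-closure-variable-Garding} invokes Proposition \ref{prop:addition-trick} to pass from the G{\r a}rding setting to the standard-growth setting (where Theorem \ref{theo:Gamma-closure-variable} applies). So you cannot appeal to Theorem \ref{theo:Gamma-closure-variable-Garding} here without assuming what you are trying to prove.

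Setting aside the circularity, your verification of the hypotheses and your handling of the infima are correct. But the paper proceeds by a short direct argument that avoids any closure theorem: from the G{\r a}rding inequality one has $\|\nabla u\|_{L^p(U)}^p \le \alpha_U^{-1}\big(F_\eps(u,U) + \gamma_U\|u\|_{L^p(U)}^p\big)$, which yields the two-sided sandwich
\[
  \Big(1 - \tfrac{\lambda_k}{\alpha_U}\Big) F_\eps^{(k)}(u,U) - \tfrac{\gamma_U \lambda_k}{\alpha_U}\|u\|_{L^p(U)}^p \;\le\; F_\eps(u,U) \;\le\; F_\eps^{(k)}(u,U).
\]
Since $\Gamma$-$\liminf$ and $\Gamma$-$\limsup$ are stable under continuous additive perturbations, this sandwich passes to the $\Gamma$-lower and upper limits of $F_\eps$, and sending $k\to\infty$ (using monotonicity in $k$) gives $\Gamma\text{-}\lim_\eps F_\eps = \inf_k F_0^{(k)} = \lim_k F_0^{(k)}$. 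The $\Gamma$-convergence $F_0^{(k)} \to F_0$ then follows because $F_0^{(k)}$ is decreasing and $F_0$ is lower semicontinuous. The density statement is obtained from the standard-growth integral representation of each $F_0^{(k)}$ (Theorem \ref{theo:standard-growth-Gamma-compactness}) together with monotone convergence, essentially as you do. This direct route is what makes the proposition available as input for the G{\r a}rding closure theorems rather than a consequence of them.
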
 

\begin{proof}
Since $\| \nabla u \|_{L^{p}(U)}^{p} \le \alpha_U^{-1} (F_{\eps}(u) + \gamma_U \|u\|_{L^{p}(U)}^{p})$ for $u \in W^{1, p}(U; \R^m)$, we obtain 
\[ \Big( 1 - \frac{ \lambda_{k} }{ \alpha_{U} } \Big) 
   F_{\eps}^{ (k) }(u) - \frac{ \gamma_{U} \lambda_{k} }{ \alpha_{U} } \| u \|_{ L^{p}(U)}^{p} 
   \le F_{\eps}(u) 
   \le F_{\eps}^{ (k) }(u) \]
for all $u \in L^{p}(U; \R^{m})$. As the $\Gamma$-$\liminf$ and the $\Gamma$-$\limsup$ are stable under continuous perturbations, this implies that 
\begin{align}
\label{eq:addition-trick-1}
  \Big( 1 - \frac{\lambda_{k}}{\alpha_{U}} \Big) 
  F_{0}^{(k)}(u) - \frac{\gamma_{U} \lambda_{k}}{\alpha_{U} } \|u\|_{L^{p}(U)}^{p} 
  \le \Gamma(L^p)\text{-}\liminf_{\eps \to 0} F_{\eps}(u) 
  \le \Gamma(L^p)\text{-}\limsup_{\eps \to 0} F_{\eps}(u) 
  \le F_{0}^{ (k) }(u) 
\end{align}
and so, due to monotonicity in $k$, 
\begin{equation}
\label{eq:addition-trick-2}
  \Gamma(L^p)\text{-}\lim_{\eps \to 0} F_{\eps}(u) 
  = \inf_{k \in \N} F_{0}^{ (k) }(u) 
  = \lim_{k \to \infty} F_{0}^{ (k) }(u) 
  = \Gamma(L^p)\text{-}\lim_{k \to \i} F_{0}^{ (k) }, 
\end{equation}
where last equality follows from the fact that $ F_{0}^{ (k) } $ is decreasing in $k$ and that $ F_{0} $ is lower semicontinuous.  

In order to prove the statement on the densities, we first note that by Theorem \ref{theo:standard-growth-Gamma-compactness} there exist Carath\'{e}odory functions $f^{(k)}_0$ such that $F^{(k)}_0(u, U) = \int_U f^{(k)}_0(x, \nabla u(x)) \, dx$ for $u \in W^{1, p}(U; \R^m)$ and $+ \infty$ otherwise. By monotonicity in $k$ 
\[ \int_U f^{(k')}_0(x, X) \, dx 
   \le \int_U f^{(k)}_0(x, X) \, dx \] 
for all $X \in \M{m}{n}$ and $U \in {\cal A}(\Omega)$ if $k' \ge k$. Because of continuity in $X$ it follows for a.e.\ $x \in \Omega$ 
\[ f^{(k')}_0(x, X) 
   \le f^{(k)}_0(x, X) \quad \forall \, X \in \M{m}{n}. \] 
By monotone convergence it thus follows that 
\[ F_0(u, U) 
   = \lim_{k \to \infty} F^{(k)}_0(u, U) 
   = \int_U \lim_{k \to \infty} f^{(k)}_0 (x, \nabla u(x)) \, dx \]
for $u \in W^{1, p}(U; \R^m)$ and $F_0(u, U) = + \infty$ otherwise. As $f^{(k)}_0(x, \cdot)$ is quasiconvex for almost every $x$, so is $f_0(x, \cdot)$, which shows that $f_0$ is Carath{\'e}odory. 
\end{proof}

\begin{remark}
The first part of this proof shows that requiring the uniform $p$-G{\r a}rding assumption and the existence of the $\Gamma$-limits only on a single bounded and open region $\Omega$, one still has that 
\[ \Gamma(L^p)\text{-}\lim_{\eps \to 0} F_{\eps} 
   = \inf_{ j \in \N } F_{0}^{(j)} 
   = \lim_{ j \to \infty } F_{0}^{(j)}. \] 
\end{remark}

We now prove Theorem \ref{theo:Gamma-closure-variable-Garding} by reducing with the help of Proposition \ref{prop:addition-trick} to standard growth conditions. The proof of Theorem \ref{theo:Gamma-closure-single-Garding} will then be a straightforward adaption of the first part of the following proof. 

\begin{proof}[Proof of Theorem \ref{theo:Gamma-closure-variable-Garding}] 
Let for every $ k \in \N $, $ j \in \N \cup \{\i\} $ and $ \eps > 0 $
\[ f^{(k,j)}_{\eps}(x, X) := f^{(j)}_{\eps}(x, X) + \frac{1}{k} |X|^p 
\quad \mbox{and}
\quad
F^{(k,j)}_{\eps}(u, U) := F^{(j)}_{\eps}(u, U) + \frac{1}{k} \int_U |\nabla u(x)|^p \, dx. \]
Clearly, for each (fixed) $ k \in \N $ 
the $f^{(k, j)}_{\eps}$ uniformly satisfy standard $p$-growth assumptions and $((f^{(k, j)}_{\eps})_{\eps > 0})_{j \in \N}$ is equivalent to $(f^{(k, \infty)}_{\eps})_{\eps > 0}$. 

Assume that for every $k \in \N $, $j \in \N $, and all $U \in {\cal A}(\Omega)$ the $\Gamma$-limit of $F^{(k, j)}_{\eps}$ exists and is given by 
\[ \Gamma(L^p)\text{-}\lim_{\eps \to 0} F^{(k, j)}_{\eps}(\cdot, U) 
   = F^{(k,j)}_0(\cdot, U), \]
where $F^{(k,j)}_0(\cdot, U)$ is an integral functional with density $f^{(k,j)}_0$. Then by Theorem \ref{theo:Gamma-closure-variable}
there exist Borel functions $f^{(k, \infty)}_{0} : \Omega \times \M{m}{n} \to \R$ and corresponding integral functionals $F^{(k, \infty)}_{0}$ such that 
\begin{align*}
  \Gamma(L^p)\text{-}\lim_{\eps \to 0} F^{(k, \infty)}_{\eps}(\cdot, U)
   = F^{(k, \infty)}_0(\cdot, U)
\end{align*}
for all $U \in {\cal A}(\Omega)$ and 
\begin{align*}
  \Gamma(L^p)\text{-}\lim_{j \to \infty} F^{(k, j)}_{0}(u, U)
  = \lim_{j \to \infty} F^{(k, j)}_{0}(u, U)
  = F^{(k, \infty)}_0(u, U)
\end{align*}
for all $u \in L^p(\Omega; \R^m)$ and $U \in {\cal A}(\Omega)$. 

From Proposition~\ref{prop:addition-trick} it follows immediately that for every $ j \in \N \cup \{ \infty \} $
\begin{align*}
  \Gamma(L^p)\text{-}\lim_{\eps \to 0} F^{(j)}_{\eps}(\cdot, U)
   = F^{(j)}_0(\cdot, U)
   := \inf_{ k \in \N } F^{(k, j)}_0(\cdot, U) 
\end{align*}
and it remains to prove that 
\[ F^{(\infty)}_0(\cdot, U) = \lim_{j \to \i} F^{(j)}_0(\cdot, U) = \Gamma(L^p)\text{-}\lim_{j \to \i} F^{(j)}_0(\cdot, U). \]
To this end, we first infer from \eqref{eq:addition-trick-2} that 
\[
  \limsup_{ j \to \i } F^{(j)}_{0}(u, U)
  \le \inf_{ k \in \N } \limsup_{ j \to \i } F^{(k, j)}_{0}(u, U) 
  = \inf_{ k \in \N } F^{(k, \i)}_{0}(u, U)  
  = F^{(\i)}_{0}(u, U).
\]
By \eqref{eq:addition-trick-1} on the other hand, for any $ k \in \N $,
\begin{align*}
  \liminf_{ j \to \i } F^{(j)}_{0}(u, U)
  &\ge \liminf_{ j \to \i } \left( 1 - \frac{ 1 }{ \alpha_{U} k } \right) F_{0}^{( k , j )}(u,U) 
	- \frac{ \gamma_{U} }{ \alpha_{U} k } \| u \|_{ L^{p}( U ) }^{p} \\
  &= \left( 1 - \frac{ 1 }{ \alpha_{U} k } \right) F_{0}^{( k , \i )}(u,U) 
	- \frac{ \gamma_{U} }{ \alpha_{U} k } \| u \|_{ L^{p}( U ) }^{p} \\
  &\ge \left( 1 - \frac{ 1 }{ \alpha_{U} k } \right) F_{0}^{(\i)}(u,U) 
	- \frac{ \gamma_{U} }{ \alpha_{U} k } \| u \|_{ L^{p}( U ) }^{p}
\end{align*}
and so
\[ \liminf_{ j \to \i } F^{(j)}_{0}(u, U) \ge F_{0}^{(\i)}(u,U). \]
Finally we note that exactly the same may be done for the $ \Gamma\text{-}\liminf $ and $ \Gamma\text{-}\limsup $ in place of $\liminf$, respectively, $\limsup$. 

Now if we do not a priori assume that the $\Gamma$-limits of $F^{(k, j)}_{\eps}$ exist, a diagonal sequence argument shows that for any subsequence $\eps_i$ there is a further subsequence $\eps_{i_l}$ such that 
\[ \Gamma(L^p)\text{-}\lim_{l \to \infty} F^{(k, j)}_{\eps_{i_l}}(\cdot, U) 
   = F^{(k,j)}_0(\cdot, U) \]
for all $ k \in \N $, $j \in \N \cup\{\infty\}$ and $U \in {\cal A}(\Omega)$, where $F^{(k,j)}_0(\cdot, U)$ is an integral functional with density $f^{(k,j)}_0$. Then, as shown above, $F^{(\infty)}_0(u, U)$ is independent of the subsequence chosen, so that in fact 
\begin{align*}
  \Gamma(L^p)\text{-}\lim_{\eps \to 0} F^{(\infty)}_\eps(\cdot, U) 
  = F^{(\infty)}_0(\cdot, U) 
\end{align*}
for $F^{(\infty)}_0(u, U) = \lim_{j \to 0} F^{(j)}_{0}(u, U)$. 

Finally, the convergence of the densities now follows precisely as in the proof of Theorem \ref{theo:Gamma-closure-variable}. 
\end{proof}

\begin{proof}[Proof of Theorem \ref{theo:Gamma-closure-single-Garding}] This is follows exactly along the lines of the first part of the proof of Theorem \ref{theo:Gamma-closure-variable-Garding} taking into account the remark after Proposition \ref{prop:addition-trick}. 
\end{proof}

\subsection{Boundary values and compactness}

In this section we will first prove that the $\Gamma$-closure theorem for G{\r a}rding type functionals remains true for functionals with prescribed boundary values. On the other hand, G{\r a}rding type functionals may lack coercivity so that bounded energy sequences do not necessarily admit convergent subsequences. We will see, however, that this lack of compactness may be circumvented on suitable domains by imposing boundary values. 

Let us for this subsection fix a bounded and open set $\Omega \subset \R^n$ and a function $u_0 \in W^{1, p}(\Omega; \R^m)$. For an integral functional $F$ we denote by 
\[ \bar{F}(u) 
   := \begin{cases} 
        F(u), &\text{if } u \in u_0 + W^{1, p}_0(\Omega; \R^m), \\ 
        + \infty &\text{otherwise}
     \end{cases} \]
its restriction to prescribed boundary values $u_0$ on $\partial \Omega$. 

\begin{theo}[$\Gamma$-closure with boundary values]\label{theo:Gamma-closure-boundary-values}
Suppose that the family of functionals $F^{(j)}_{\eps}$, $ j \in \N \cup \{ \infty \}$, $ \eps > 0$, with densities $f^{(j)}_{\eps} : \Omega \times \M{m}{n} \to \R$ is of uniform $p$-G{\r a}rding type on $\Omega$. Assume that
\begin{itemize} 
\item[(i)] For each $j < \infty$, $\Gamma(L^p)\text{-}\lim_{\eps \to 0} F^{(j)}_{\eps} =: F^{(j)}_0$ exists. 

\item[(ii)] The families $((f^{(j)}_{\eps})_{\eps > 0})_{j \in \N}$ and $(f^{(\infty)}_{\eps})_{\eps > 0}$ are equivalent on $\Omega$. 
\end{itemize}

Then also $\Gamma(L^p)\text{-}\lim_{\eps \to 0} F^{(\infty)}_{\eps} =: F^{(\infty)}_0$ exists, 
\[ \Gamma(L^p)\text{-}\lim_{\eps \to 0} \bar{F}^{(j)}_{\eps} =: \bar{F}^{(j)}_0 
   \quad \forall j \in \N \cup \{+\infty\} 
   \quad\text{and}\quad 
   \bar{F}^{(\infty)}_0 
   = \lim_{j \to \infty} \bar{F}^{(j)}_0 
   = \Gamma(L^p)\text{-}\lim_{j \to \infty} \bar{F}^{(j)}_0. \] 
\end{theo}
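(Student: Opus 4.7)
The plan is to reduce to the unrestricted result Theorem \ref{theo:Gamma-closure-single-Garding} via a classical De Giorgi cut-off (or fundamental estimate) argument. The proof naturally splits into three tasks: (A) existence of $F^{(\infty)}_0$, which is immediate from Theorem \ref{theo:Gamma-closure-single-Garding}; (B) for each $j \in \N \cup \{\infty\}$, identifying $\Gamma(L^p)\text{-}\lim_{\eps \to 0} \bar F^{(j)}_\eps$ with the overbar-restriction of $F^{(j)}_0$ to $u_0 + W^{1,p}_0(\Omega;\R^m)$; and (C) lifting the unrestricted commutability in Theorem \ref{theo:Gamma-closure-single-Garding} to the restricted one. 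For task (B), the $\Gamma$-$\liminf$ direction is straightforward: if $u_\eps \to u$ in $L^p$ with $u_\eps \in u_0 + W^{1,p}_0(\Omega;\R^m)$ and $\liminf_\eps \bar F^{(j)}_\eps(u_\eps) < \infty$, the $p$-G\r arding inequality combined with $L^p$-boundedness yields $W^{1,p}$-boundedness of $(u_\eps)$, hence weak $W^{1,p}$-convergence along a subsequence; the limit then lies in the weakly closed affine subspace $u_0 + W^{1,p}_0$, and the claim reduces to the unrestricted $\Gamma$-$\liminf$ from Theorem \ref{theo:Gamma-closure-single-Garding}.

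The $\Gamma$-$\limsup$ direction is the main technical step and requires boundary-matching modification of the recovery sequence. For $u \in u_0 + W^{1,p}_0$, take a recovery sequence $v_\eps \to u$ for the unrestricted functional $F^{(j)}_\eps$, which by Lemma \ref{lemma:equi-int} may be chosen so that $(|\nabla v_\eps|^p)$ is equiintegrable. For a collection of nested shells $\Omega_i \Subset \Omega_{i+1} \Subset \Omega$ with cut-offs $\varphi_i \in C^\infty_c(\Omega)$ equal to $1$ on $\Omega_i$ and vanishing off $\Omega_{i+1}$, set $w^{(i)}_\eps := \varphi_i v_\eps + (1-\varphi_i) u$; since $\varphi_i(v_\eps-u)$ has compact support in $\Omega$ and $u-u_0 \in W^{1,p}_0$, one has $w^{(i)}_\eps \in u_0 + W^{1,p}_0$. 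The upper $p$-growth bound yields an estimate of the form
\[ F^{(j)}_\eps(w^{(i)}_\eps) \le F^{(j)}_\eps(v_\eps) + C\! \int_{\Omega_{i+1}\setminus\Omega_i} \!\! \bigl( |\nabla v_\eps|^p + |\nabla u|^p + |\nabla\varphi_i|^p |v_\eps - u|^p + 1 \bigr) dx + C|\Omega \setminus \Omega_i|. \]
A pigeonhole argument over $N$ disjoint shells picks one whose transition contribution is $O(1/N)$, and equiintegrability together with $L^p$-convergence of $v_\eps - u$ render the remaining terms arbitrarily small; a diagonal choice $i = i(\eps)$ then produces a recovery sequence respecting the boundary data.

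For task (C), the pointwise identity $\bar F^{(\infty)}_0 = \lim_j \bar F^{(j)}_0$ is immediate: on $u_0 + W^{1,p}_0$ the overbar-restrictions coincide with the $F^{(j)}_0$ to which Theorem \ref{theo:Gamma-closure-single-Garding} applies, while elsewhere every $\bar F^{(j)}_0$ equals $+\infty$. The $\Gamma$-$\limsup$ in $j$ follows via the constant sequence and this pointwise limit. For the $\Gamma$-$\liminf$, given $u_j \to u$ in $L^p$ with $\liminf_j \bar F^{(j)}_0(u_j) < \infty$, the G\r arding inequality once more lifts the convergence to weak $W^{1,p}$-convergence along a subsequence, forces $u \in u_0 + W^{1,p}_0$, and reduces the claim to the unrestricted $\Gamma$-$\liminf$ statement already proved. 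The main obstacle is the cut-off construction in task (B), where one must balance the thickness of the boundary layer against the transition energy it introduces; this is standard but technically delicate and crucially relies on the equiintegrability provided by Lemma \ref{lemma:equi-int}.
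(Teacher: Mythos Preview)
Your proposal is correct, and its overall architecture matches the paper's: first invoke Theorem~\ref{theo:Gamma-closure-single-Garding} for the unrestricted functionals, then transfer $\Gamma$-convergence to the boundary-restricted versions, and finally repeat for the $j$-limit. The difference lies in how task~(B), the $\limsup$-inequality, is executed.

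You run a classical De Giorgi cut-off/pigeonhole construction on top of an equiintegrable modification of the recovery sequence. The paper instead exploits the full strength of Lemma~\ref{lemma:equi-int}: that lemma already allows the modified sequence $v_k$ to be chosen with $v_k = u$ on $\partial\Omega$ (hence $v_k \in u_0 + W^{1,p}_0$) while agreeing with the original recovery sequence off a set of vanishing measure. One then simply compares $F^{(j)}_{\eps_k}(v_k)$ with $F^{(j)}_{\eps_k}(u_{\eps_k})$ on that small set using the upper growth bound and equiintegrability---no cut-offs, no layers, no pigeonhole. This is shorter and avoids the delicate balancing of shell thickness against transition energy that you flag as the main obstacle. (Incidentally, for your version the De Giorgi argument already works with the \emph{original} recovery sequence, which is $W^{1,p}$-bounded by G\r{a}rding and $L^p$-convergent; the preliminary pass through Lemma~\ref{lemma:equi-int} is not really needed there.)

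For task~(C), one point deserves explicit mention: when you write ``the G\r{a}rding inequality once more lifts the convergence'', you are applying G\r{a}rding to the \emph{limiting} functionals $F^{(j)}_0$. That these are again of uniform $p$-G\r{a}rding type is not part of the hypotheses and needs a one-line check (take a recovery sequence for $F^{(j)}_0(u)$, apply G\r{a}rding at level~$\eps$, and pass to the limit using weak lower semicontinuity of the $L^p$-norm of the gradient). The paper does this explicitly and then simply reapplies the boundary-value lemma to the sequence $(F^{(j)}_0)_{j}$, which is tidier than redoing the $\liminf$/$\limsup$ argument by hand.
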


We first show that prescribing boundary conditions is compatible with taking $\Gamma$-limits for a single functional satisfying a G{\r a}rding type inequality.
\begin{lemma}[Boundary values for G{\r a}rding type functionals]\label{lemma:boundary-values}
Let $\Omega$ have a Lipschitz boundary. Suppose $f_{\eps} : \Omega \times \M{m}{n} \to \R$ are Borel functions such that for some $\beta > 0$ and $1 < p < \infty$
\[ - \beta \le f_{\eps}(x, X) \le \beta( |X|^{p} + 1 ) \]
for all $x \in \Omega$, $X \in \M{m}{n}$ and $\eps > 0$ and there are $\alpha, \gamma > 0$ such that 
\[ F_{\eps}(u) 
   \ge \alpha \int_{ \Omega } |\nabla u(x)|^p \, dx - \gamma \int_{ \Omega } |u(x)|^p \, dx \]
for all $ u \in W^{1, p}( \Omega ; \R^m ) $, $\eps > 0$. Then $\Gamma(L^p)\text{-}\lim_{\eps \to 0} F_{\eps} = F_{0}$ implies $\Gamma(L^p)\text{-}\lim_{\eps \to 0} \bar{F}_{\eps} = \bar{F}_{0}$. 
\end{lemma}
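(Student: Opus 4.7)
The plan is to verify the two $\Gamma$-convergence inequalities for the restricted functionals $\bar F_\eps$. The $\Gamma\text{-}\liminf$ direction is essentially automatic once the G\aa rding inequality is combined with the weak closedness of the affine subspace $u_0 + W^{1,p}_0(\Omega;\R^m)$. The $\Gamma\text{-}\limsup$ direction is the substantial part: given an arbitrary recovery sequence for the unrestricted $F_\eps$, we must modify it so that it attains the prescribed boundary values without spoiling the energy bound. This will be done by a De Giorgi--Franzoni style cutoff argument combined with a pigeonhole choice of a thin boundary layer in which to interpolate.

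\textbf{Liminf.} Let $u_\eps \to u$ in $L^p(\Omega;\R^m)$. If $\liminf_\eps \bar F_\eps(u_\eps) = +\infty$ the inequality is trivial, so assume it is finite and pass to a subsequence realising it. Then $u_\eps - u_0 \in W^{1,p}_0(\Omega;\R^m)$ and $F_\eps(u_\eps) = \bar F_\eps(u_\eps)$ is bounded. Since $u_\eps$ is bounded in $L^p$, the G\aa rding estimate yields that $\nabla u_\eps$ is bounded in $L^p$, hence $u_\eps \weakly u$ in $W^{1,p}(\Omega;\R^m)$ along a further subsequence. The set $u_0 + W^{1,p}_0(\Omega;\R^m)$ is weakly closed, so $u$ belongs to it and
\[ \bar F_0(u) = F_0(u) \le \liminf_\eps F_\eps(u_\eps) = \liminf_\eps \bar F_\eps(u_\eps). \]

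\textbf{Limsup.} Fix $u \in u_0 + W^{1,p}_0(\Omega;\R^m)$ and a recovery sequence $v_\eps \to u$ in $L^p$ with $F_\eps(v_\eps) \to F_0(u)$; by G\aa rding, $\nabla v_\eps$ is bounded in $L^p$. For $\delta > 0$ and $N \in \N$ partition the inner $\delta$-tube of $\partial\Omega$ into $N$ layers $A_i^\delta = \{x \in \Omega : (i-1)\delta/N < \dist(x, \partial\Omega) < i\delta/N\}$. A pigeonhole argument selects, for each $\eps$, an index $i_\eps$ such that $\int_{A_{i_\eps}^\delta}(|\nabla v_\eps|^p + |\nabla u|^p + 1)\,dx \le C/N$, uniformly in $\eps$. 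Pick a smooth cutoff $\phi_\eps^\delta$ equal to $1$ on the inner side of $A_{i_\eps}^\delta$, equal to $0$ on the strip between $A_{i_\eps}^\delta$ and $\partial\Omega$, and with $|\nabla \phi_\eps^\delta| \le CN/\delta$. Define $w_\eps^\delta := \phi_\eps^\delta v_\eps + (1 - \phi_\eps^\delta) u$. Since $w_\eps^\delta = u$ in a neighbourhood of $\partial\Omega$, we have $w_\eps^\delta - u_0 \in W^{1,p}_0(\Omega;\R^m)$, and $w_\eps^\delta - u = \phi_\eps^\delta(v_\eps - u) \to 0$ in $L^p$. Splitting the integral according to where $\phi_\eps^\delta = 1$, $\phi_\eps^\delta \in (0,1)$ and $\phi_\eps^\delta = 0$, and using $f_\eps \ge -\beta$ on the interior piece together with the upper bound on the other two, we obtain
\[ F_\eps(w_\eps^\delta) \le F_\eps(v_\eps) + \frac{C N^p}{\delta^p}\int_\Omega |v_\eps - u|^p\,dx + \frac{C}{N} + C\int_{\{\dist(\cdot,\partial\Omega) < \delta\}}(|\nabla u|^p + 1)\,dx + C\delta. \]
Letting $\eps \to 0$ kills the $L^p$ term, then $N \to \infty$ kills the $C/N$ contribution, and finally $\delta \to 0$ kills the boundary-strip term by absolute continuity. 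A standard diagonal extraction then produces a sequence $w_\eps \in u_0 + W^{1,p}_0(\Omega;\R^m)$ with $w_\eps \to u$ in $L^p$ and $\limsup_\eps F_\eps(w_\eps) \le F_0(u) = \bar F_0(u)$.

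\textbf{Main obstacle.} The delicate step is controlling the transition-layer energy: the cutoff carries a gradient of size $N/\delta$, so the combined upper bound $\beta(|\nabla w_\eps^\delta|^p + 1)$ generates a factor $(N/\delta)^p$ multiplying $|v_\eps - u|^p$; this can only be absorbed because $v_\eps - u \to 0$ in $L^p$ and because the order of limits is $\eps \to 0$ first, then $N \to \infty$, then $\delta \to 0$. The pigeonhole choice of $i_\eps$ is precisely what makes the residual layer contribution of order $1/N$ rather than $1$. Once this is in place the remainder of the proof is routine bookkeeping.
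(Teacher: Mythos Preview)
Your proof is correct, but the limsup argument follows a genuinely different route from the paper's.

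For the $\liminf$ inequality your argument coincides with the paper's: both use the G{\r a}rding estimate to extract a weakly convergent subsequence in $W^{1,p}$ and then invoke weak closedness of $u_0 + W^{1,p}_0(\Omega;\R^m)$.

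For the $\limsup$ inequality the paper proceeds differently. Instead of a De Giorgi--Franzoni cutoff with a pigeonhole choice of the transition layer, the paper takes a recovery sequence $(u_\eps)$ for $F_0$, notes it is bounded in $W^{1,p}$ by G{\r a}rding, and then applies the Fonseca--M{\"u}ller--Pedregal equiintegrability lemma (their Lemma~\ref{lemma:equi-int}). This produces, along a subsequence, modifications $v_k \in u_0 + W^{1,p}_0(\Omega;\R^m)$ that agree with $u_{\eps_{j_k}}$ outside a set $A_k$ of vanishing measure and have $(|\nabla v_k|^p)$ equiintegrable. The error term is then simply $\int_{A_k} \beta(1 + |\nabla v_k|^p)\,dx$, which tends to zero by equiintegrability. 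Because the FMP lemma only yields a subsequence, the paper closes the argument via a pointwise Urysohn property for $\Gamma$-limits.

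Your De Giorgi-type construction is more elementary and self-contained: it avoids both the equiintegrability lemma and the Urysohn step, producing a recovery sequence for the full family via diagonal extraction in $(N,\delta)$. The paper's approach, on the other hand, reuses the same FMP-based machinery that drives their main closure theorems, so it is methodologically uniform with the rest of the paper. Both approaches rely in the same essential way on the G{\r a}rding inequality (to bound $\|\nabla v_\eps\|_{L^p}$), on the upper growth bound (to control the modified energy on the transition/error region), and on the lower bound $f_\eps \ge -\beta$ (to discard the original integrand there).
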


\begin{proof}
1. $\liminf$-inequality. Suppose $u_{\eps} \to u$ in $L^{p}(\Omega ; \R^{m})$. If $\bar{F}_{0}(u) = \infty$, then indeed $\liminf_{\eps \to 0} \bar{F}_{\eps}(u_{\eps}) = \infty$ for otherwise we may by G\r{a}rding's inequality find a subsequence $(u_{\eps_{k}})_{k \in \N} \subset u_{0} + W_{0}^{1,p}(\Omega; \R^{m})$ with $ u_{\eps_{k}} \weakly u$ in $ W^{1,p}(\Omega ; \R^{m})$. This contradicts $\bar{F}_{0}(u) = \infty$. If, on the other hand, $\bar{F}_{0}(u) < \infty$, then 
\[ \liminf_{\eps \to 0} \bar{F}_{\eps}(u_{\eps}) 
   \ge \liminf_{\eps \to 0} F_{\eps}(u_{\eps}, \Omega)
   \ge F_{0}(u, \Omega) = \bar{F}_{0}(u). \]

2. Consider an arbitrary subsequence $\eps_{j} \searrow 0$. Referring to the pointwise Urysohn property from Theorem \ref{theo:Urysohn}, for given $u \in L^p(\Omega; \R^m)$ we only have to provide a recovery sequence $v_k \to u$ in $L^p(\Omega; \R^m)$ along a suitable subsequence $(\eps_{j_k})_{k \in \N}$ of $(\eps_{j})_{j \in \N}$. 

In order to do so, we start with a recovery sequence $u_{\eps}$ for the original functional $F_{0}$ such that $u_{\eps} \to u$ in $L^p(\Omega; \R^m)$ and $\lim_{\eps \to 0} F_{\eps}(u_{\eps}) = F_0(u)$. If $F_0(u) = \infty$, then also $\lim_{\eps \to 0} \bar{F}_{\eps}(u_{\eps}) = \bar{F}_0(u)$ and the claim follows. If $F_0(u) < \infty$, then by G{\r a}rding's inequality $(u_{\eps})_{\eps > 0}$ is bounded in $W^{1, p}(\Omega; \R^m)$. Referring to Lemma \ref{lemma:equi-int} we find a subsequence $(\eps_{j_k})_{k \in \N}$ and $v_k \in u_{0} + W^{1,p}_{0}(\Omega; \R^{m})$ such that $(|\nabla v_{k}|^{p})_{k \in \N}$ is equiintegrable in $\Omega$, $|A_k| \to 0$ for 
\[ A_{k} := \{ u_{ \eps_{j_{k}} } \ne v_{k} \mbox{ or } \nabla u_{ \eps_{j_{k}} } \ne \nabla v_{k} \} \] 
and $v_k \weakly u$ in $W^{1, p}(\Omega; \R^m)$. But then
\begin{align*} 
  \bar{F}_{\eps_{j_{k}}}(v_{k})
  &=  F_{\eps_{j_{k}}}(v_{k}) \\
  &= \int_{\Omega \setminus A_{k}} f_{\eps_{j_{k}}}(x, \nabla u_{\eps_{j_{k}}}(x)) \, dx 
     + \int_{A_{k}} f_{\eps_{j_{k}}}(x, \nabla v_{k}(x)) \, dx \\
  &\le F_{ \eps_{ j_{k} } }( u_{ \eps_{ l_{k} } }) + \int_{ A_{k} } \beta( 1 + | \nabla v_{k}(x) |^{p} ) \ dx 
\end{align*}
and thus 
\[ \limsup_{k \to \infty} \bar{F}_{\eps_{j_{k}}}(v_{k}) 
   \le \limsup_{k \to \infty} F_{\eps_{j_{k}}}(u_{\eps_{j_{k}}}) 
   \le F_{0}(u) = \bar{F}_{0}(u). \qedhere \]
\end{proof}

\begin{proof}[Proof of Theorem \ref{theo:Gamma-closure-boundary-values}.] 
$\Gamma(L^p)\text{-}\lim_{\eps \to 0} F^{(\infty)}_{\eps} =: F^{(\infty)}_0$ was shown in Theorem \ref{theo:Gamma-closure-single-Garding}. The remaining assertions follow from the same Theorem and Lemma \ref{lemma:boundary-values} by noting that also the family $F^{(j)}_0$, $j \in \N \cup \{\infty\}$, is of uniform $p$-G{\r a}rding type on $\Omega$ with the same constants $\alpha_{\Omega}$ and $\gamma_{\Omega}$: For given $j$ let $u_{\eps}$ be a recovery sequence for $u \in W^{1,p}(\Omega; \R^m)$. By G\r{a}rding's inequality and boundedness of $\big(F^{(j)}_{\eps}(u_{\eps})\big)_{\eps > 0}$ we have $u_{\eps} \weakly u$ in $ W^{1,p}(\Omega;\R^{m})$ and so  
\begin{align*} 
  F^{(j)}_0(u) 
  &= \lim_{\eps \to 0} F^{(j)}_{\eps}(u_{\eps}) \\
  &\ge \liminf_{\eps \to 0} \Big(\alpha_{\Omega} \| \nabla u_{\eps} \|_{ L^{p}(\Omega)}^{p} 
   - \gamma_{\Omega} \| u_{ \eps } \|_{ L^{p}(\Omega)}^{p} \Big) \\
  &\ge \alpha_{\Omega} \| \nabla u \|_{ L^{p}(\Omega) }^{p} 
   - \gamma_{\Omega} \| u \|_{ L^{p}(\Omega)}^{p}. \qedhere
\end{align*}
\end{proof}

In view of our application to homogenization theory to be discussed below, however, we observe that Poincar{\'e}'s inequality guarantees coercivity on sufficiently small domains. 

\begin{prop}\label{prop:coercivity-small-domains}
Suppose $f : \Omega \times \M{m}{n} \to \R$ is a Borel function such that for some $\beta > 0$ and $1 < p < \infty$
\[ - \beta \le f(x, X) \le \beta( |X|^{p} + 1 ) \]
for all $x \in \Omega$, $X \in \M{m}{n}$ and there are $\alpha_{\Omega}, \gamma_{\Omega} > 0$ such that 
\[ F(u, \Omega) 
   \ge \alpha_{\Omega} \int_{\Omega} |\nabla u(x)|^p \, dx - \gamma_{\Omega} \int_{\Omega} |u(x)|^p \, dx \]
for all $u \in W^{1, p}(\Omega; \R^m)$. Then, if $U \subset \Omega$ is sufficiently small, there are constants $A > 0$ and $B$ such that
\[ F(u, U) 
   \ge A \int_{U} |\nabla u(x)|^p \, dx - B \] 
for all $u \in u_0 + W^{1, p}_0(U; \R^m)$. 
\end{prop}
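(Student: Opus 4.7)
The strategy is to reduce the claim on $U$ to the global G{\r a}rding inequality on $\Omega$ by extending the candidate $u$ by the fixed boundary datum $u_0$. Specifically, given $u \in u_0 + W^{1,p}_0(U;\R^m)$, define $\tilde u \in W^{1,p}(\Omega;\R^m)$ by setting $\tilde u = u$ on $U$ and $\tilde u = u_0$ on $\Omega \setminus U$; this is well-defined in $W^{1,p}(\Omega;\R^m)$ because $u - u_0 \in W^{1,p}_0(U;\R^m)$ extends by zero across $\partial U$. Additivity of the integral and the assumed upper growth bound then give
\[
  F(u,U) \;=\; F(\tilde u,\Omega) - \int_{\Omega \setminus U} f(x,\nabla u_0(x))\,dx
  \;\ge\; F(\tilde u,\Omega) \;-\; \beta\bigl(\|\nabla u_0\|_{L^p(\Omega)}^p + |\Omega|\bigr).
\]

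Next, I would apply the hypothesized G{\r a}rding inequality to $\tilde u$ and split the integrals over $U$ and $\Omega \setminus U$, discarding the non-negative contribution $\alpha_\Omega \int_{\Omega\setminus U}|\nabla u_0|^p\,dx$, to obtain
\[
  F(\tilde u,\Omega) \;\ge\; \alpha_\Omega \int_U |\nabla u|^p\,dx \;-\; \gamma_\Omega \int_U |u|^p\,dx \;-\; \gamma_\Omega \|u_0\|_{L^p(\Omega)}^p.
\]
Since $u - u_0 \in W^{1,p}_0(U;\R^m)$, the Poincar{\'e} inequality yields $\|u - u_0\|_{L^p(U)} \le C_{n,p}\operatorname{diam}(U)\|\nabla u - \nabla u_0\|_{L^p(U)}$. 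Combining this with the triangle inequality and the elementary bound $(a+b)^p \le 2^{p-1}(a^p+b^p)$, one finds constants $C = C(n,p)$ and $K = K(n,p,\gamma_\Omega,u_0,\Omega)$ such that
\[
  \gamma_\Omega \|u\|_{L^p(U)}^p \;\le\; C\,\gamma_\Omega \operatorname{diam}(U)^p \,\|\nabla u\|_{L^p(U)}^p \;+\; K.
\]

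Finally, choose $U \subset \Omega$ small enough (in diameter) that $C\,\gamma_\Omega \operatorname{diam}(U)^p \le \alpha_\Omega/2$. Chaining the three displays above absorbs the $\gamma_\Omega$-term into half of the coercive gradient term and produces
\[
  F(u,U) \;\ge\; \tfrac{\alpha_\Omega}{2} \int_U |\nabla u|^p\,dx \;-\; B
\]
with $B$ independent of $u$, which is the claimed inequality with $A := \alpha_\Omega/2$. The only genuinely nontrivial ingredient is the diameter-dependence of the Poincar{\'e} constant on $W^{1,p}_0(U)$, which makes the absorption possible; everything else is routine bookkeeping once one observes that extension by $u_0$ is compatible with the Sobolev class and that the upper growth bound controls the discarded integral over $\Omega\setminus U$ by a constant depending only on $u_0$, $\beta$, and $|\Omega|$.
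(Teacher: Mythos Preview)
Your proof is correct and follows essentially the same approach as the paper: extend $u$ by $u_0$ to all of $\Omega$, apply the global G{\r a}rding inequality to the extension, and then use the Poincar{\'e} inequality on $W^{1,p}_0(U)$ to absorb the $\gamma_\Omega\|u\|_{L^p(U)}^p$ term into the gradient term when $U$ is small. The only cosmetic difference is that you track the Poincar{\'e} constant explicitly through $\operatorname{diam}(U)$ and fix $A = \alpha_\Omega/2$, whereas the paper leaves the constant as a generic $C_{\rm P}$ and takes $A = \alpha_\Omega - 4^p \gamma_\Omega C_{\rm P}^p$.
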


\begin{proof}
Take any $ U \in {\cal A}( \Omega ) $. For $u \in u_0 + W^{1, p}_0(U; \R^m)$ let $\bar{u} \in W^{1, p}(\Omega; \R^m)$ be its extension by $u_0$ on $\Omega \setminus U$. We have 
\begin{align*}
  F(u, U) 
  &= F(\bar{u}, \Omega) - \int_{\Omega \setminus U} f(x, \nabla u_0(x)) \, dx \\ 
  &\ge \int_{\Omega} \Big( \alpha_{\Omega} |\nabla \bar{u}(x))|^p - \gamma_{\Omega} |\bar{u}(x)|^p \Big) \, dx 
       - \int_{\Omega \setminus U} \Big( \beta |\nabla u_0(x)|^p + \beta \Big) \, dx \\ 
  &= \int_{U} \Big( \alpha_{\Omega} |\nabla u(x))|^p - \gamma_{\Omega} |u(x)|^p \Big) \, dx 
       - \int_{\Omega \setminus U} \Big( (\beta - \alpha_{\Omega}) |\nabla u_0(x)|^p 
       + \gamma_{\Omega} |u_0(x)|^p + \beta \Big) \, dx. 
\end{align*}
Now if $C_{\rm P}$ denotes the Poincar{\'e} constant of $U$, then 
\begin{align*}
  \int_U |u(x)|^p \, dx 
  &\le \int_U \Big( 2^p |u(x) - u_0(x)|^p + 2^p |u_0(x)|^p \Big) \, dx \\ 
  &\le 2^p C_{\rm P}^p \int_U |\nabla u(x) - \nabla u_0(x)|^p \, dx 
     + 2^p \int_U |u_0(x)|^p \, dx \\ 
  &\le 4^p C_{\rm P}^p \int_U |\nabla u(x)|^p \, dx 
     + 4^p C_{\rm P}^p \int_U |\nabla u_0(x)|^p \, dx 
     + 2^p \int_U |u_0(x)|^p \, dx 
\end{align*} 
and the assertion follows with $:= \alpha_{\Omega} - 4 C_{\rm P}^p$ which is positive if $U$ is sufficiently small. 
\end{proof}

The proof shows that $A$ and $B$ only depend on the width of $U$ (through its Poincar\'{e} constant), $ p, \alpha_{\Omega} , \gamma_{\Omega} , \beta $ and $ \| u_{0} \|_{ W^{1,p}(\Omega) } $.

\section{General applications}\label{section:General-applications}

\subsection{A perturbation and a relaxation result}

When specialized to $j$- or $\eps$-independent families our $\Gamma$-closure theorems immediately imply the following perturbation and relaxation results. We only consider their formulation on a single domain $\Omega$, the adaption to variable domains is straightforward. 

The first easy consequence of the $\Gamma$-closure theorems is a stability result for $\Gamma$-limits under equivalent perturbations of the densities.
\begin{theo}[Perturbation]\label{theo:perturbation}
Let $\Omega \subset \R^n$ be bounded and open. Suppose that the families of functionals $F_{\eps}$ and $G_{\eps}$, $ \eps > 0$, with densities $f_{\eps}, g_{\eps} : \Omega \times \M{m}{n} \to \R$, respectively, are of uniform $p$-G{\r a}rding type on $\Omega$. Assume that $\Gamma(L^p)\text{-}\lim_{\eps \to 0} F_{\eps} = F_0$ and 
\[ \limsup_{\eps \to 0} \int_{\Omega} \sup_{|X| \le R} | f_{\eps}(x, X) - g_{\eps}(x , X) | \, dx = 0 \]
for all $ R > 0 $. 
Then also $\Gamma(L^p)\text{-}\lim_{\eps \to 0} G_{\eps}= F_0$. 
\end{theo}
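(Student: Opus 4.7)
The plan is to recognize that Theorem \ref{theo:perturbation} is the degenerate, one-parameter specialization of the $\Gamma$-closure Theorem \ref{theo:Gamma-closure-single-Garding}, and to obtain it by choosing the $j$-dependence trivially. No new analytic ingredient is needed; the entire content of the result is already encoded in the abstract closure theorem.

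Concretely, I would define a doubly indexed family by $f^{(j)}_{\eps} := f_{\eps}$ for every $j \in \N$ and $f^{(\infty)}_{\eps} := g_{\eps}$. Since the original families $F_{\eps}$ and $G_{\eps}$ are of uniform $p$-G\r{a}rding type on $\Omega$ with constants $\alpha,\beta,\alpha_{\Omega},\gamma_{\Omega}$ that do not depend on $\eps$, the joint family $(f^{(j)}_{\eps})_{j\in \N \cup\{\infty\},\,\eps>0}$ is of uniform $p$-G\r{a}rding type on $\Omega$ with the same constants (which are trivially uniform in $j$ as well).

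It remains to verify the two hypotheses of Theorem \ref{theo:Gamma-closure-single-Garding}. Condition (i) is immediate, since for each finite $j$ the corresponding family reduces to $F_{\eps}$ and we have assumed $\Gamma(L^p)\text{-}\lim_{\eps \to 0} F_{\eps} = F_0$, so $F^{(j)}_0 = F_0$ for every $j \in \N$. Condition (ii), equivalence of $((f^{(j)}_{\eps})_{\eps>0})_{j\in \N}$ and $(f^{(\infty)}_{\eps})_{\eps>0}$ on $\Omega$, reduces to
\[ \lim_{j \to \infty} \limsup_{\eps \to 0} \int_{\Omega} \sup_{|X| \le R} |f_{\eps}(x,X) - g_{\eps}(x,X)| \, dx = 0 \qquad \text{for all } R \ge 0, \]
and the inner $\limsup$ already vanishes by hypothesis (the outer $\lim_{j\to\infty}$ is then trivial).

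Applying Theorem \ref{theo:Gamma-closure-single-Garding} we conclude that $\Gamma(L^p)\text{-}\lim_{\eps \to 0} G_{\eps} = F^{(\infty)}_0$ exists and equals $\lim_{j \to \infty} F^{(j)}_0 = \lim_{j \to \infty} F_0 = F_0$, which is the claim. There is essentially no obstacle to overcome here: the only thing to check is that the hypothesis of Theorem \ref{theo:perturbation} is consistent with the equivalence notion used in the closure theorem, and the constant-in-$j$ choice makes this automatic.
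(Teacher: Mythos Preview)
Your proof is correct and follows exactly the same approach as the paper: set $F^{(j)}_{\eps} = F_{\eps}$ for all $j \in \N$ and $F^{(\infty)}_{\eps} = G_{\eps}$, then apply Theorem \ref{theo:Gamma-closure-single-Garding} directly. The paper's proof is essentially a one-line version of what you wrote.
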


\begin{figure}[h!]
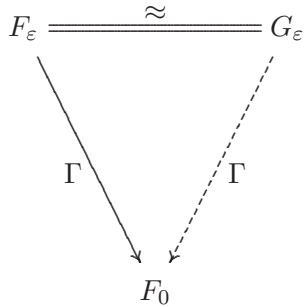

\null\hspace{5.3cm}
\begindc{\commdiag}[500]
\obj(0,2)[aa]{$ F_{ \eps } $}
\obj(2,2)[bb]{$ G_{ \eps } $}
\obj(1,0)[cc]{$ F_{ 0 } $}
\mor{aa}{bb}{$ \approx $}[\atleft,\equalline]
\mor{bb}{cc}{$ \Gamma $}[\atleft,1]
\mor{aa}{cc}{$ \Gamma $}[\atright,0]
\enddc\caption{Schematic diagram of the assumptions (solid lines) and consequences (dashed) in Theorem \ref{theo:perturbation}.}
\end{figure}

\begin{proof}
With $F^{(j)}_{\eps} = F_{\eps}$ for all $j \in \N$ and $\eps \ge 0$ and $G_{\eps} = F^{(\infty)}_{\eps}$ for $\eps > 0$ it follows directly from Theorem \ref{theo:Gamma-closure-single-Garding} that 
\[ \Gamma(L^p)\text{-}\lim_{\eps \to 0} G_{\eps} 
   = \lim_{j \to \infty} F_0 
   = F_0. \qedhere \]
\end{proof}

Our second straightforward application shows that a sequence of functionals equivalent to some fixed functional $\Gamma$-converges to the relaxation of this functional. 
\begin{theo}[Relaxation]\label{theo:relaxation}
Let $\Omega \subset \R^n$ be bounded and open. Suppose that the family of functionals $F^{(j)}$, $j \in \N \cup \{\infty\}$ with densities $f^{(j)} : \Omega \times \M{m}{n} \to \R$ are of uniform $p$-G{\r a}rding type on $\Omega$. Assume that 
\[ \lim_{j \to \infty} \int_{\Omega} \sup_{|X| \le R} | f^{(j)}(x, X) - f^{(\i)}(x , X) | \, dx = 0 \]
for all $ R > 0 $. 
Then $\Gamma(L^p)\text{-}\lim_{j \to \infty} F^{(j)} = \lsc F^{ (\infty) }$. 
\end{theo}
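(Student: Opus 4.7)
The plan is to reduce the statement to Theorem \ref{theo:Gamma-closure-single-Garding} by viewing the given $\eps$-independent sequence as a special case of the doubly-indexed framework. Specifically, I would set $F^{(j)}_{\eps} := F^{(j)}$ for every $\eps > 0$ and $j \in \N \cup \{\infty\}$, with the corresponding densities $f^{(j)}_{\eps} := f^{(j)}$ likewise independent of $\eps$. The uniform $p$-G\r{a}rding assumption transfers trivially. The equivalence hypothesis of Theorem \ref{theo:Gamma-closure-single-Garding} collapses to precisely the assumed convergence, since $\limsup_{\eps \to 0}$ acts on a constant-in-$\eps$ integrand and returns its value.

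Next, for a constant-in-$\eps$ family, $\Gamma(L^p)\text{-}\lim_{\eps \to 0} F^{(j)} = \lsc F^{(j)}$, because the $\Gamma$-limit of a constant family in a fixed topology is exactly the lower semicontinuous envelope. Thus hypothesis (i) of Theorem \ref{theo:Gamma-closure-single-Garding} holds with $F^{(j)}_{0} = \lsc F^{(j)}$, and its conclusion yields
\[ \lsc F^{(\infty)} = F^{(\infty)}_{0} = \Gamma(L^p)\text{-}\lim_{j \to \infty} \lsc F^{(j)}. \]

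It remains to replace $\lsc F^{(j)}$ by $F^{(j)}$ inside the outer $\Gamma$-limit. I would invoke the standard fact that $\Gamma$-$\liminf$ and $\Gamma$-$\limsup$ in a metrizable topology are unaffected by passage to lower semicontinuous envelopes: the inequality $\Gamma\text{-}\liminf_{j} \lsc F^{(j)} \le \Gamma\text{-}\liminf_{j} F^{(j)}$ is immediate from $\lsc F^{(j)} \le F^{(j)}$, while the reverse follows by a diagonal selection — for any recovery sequence $u_{j} \to u$ for $\Gamma\text{-}\liminf \lsc F^{(j)}$, one chooses $v_{j}$ with $\|v_{j} - u_{j}\|_{L^{p}} < 1/j$ and $F^{(j)}(v_{j}) \le \lsc F^{(j)}(u_{j}) + 1/j$ via the definition of the envelope. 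The corresponding statement for $\Gamma\text{-}\limsup$ is analogous. Combining this with the preceding display gives $\Gamma(L^p)\text{-}\lim_{j \to \infty} F^{(j)} = \lsc F^{(\infty)}$.

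The main obstacle is really just bookkeeping: recognizing that in this one-parameter setting the $\eps$-slot of the $\Gamma$-closure framework functions purely as a relaxation device, and that inserting lower semicontinuous envelopes before or after the $j$-limit does not change the outcome. Once these observations are made, no new technical ingredients beyond Theorem \ref{theo:Gamma-closure-single-Garding} are required.
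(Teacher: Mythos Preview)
Your proposal is correct and follows essentially the same route as the paper: set $F^{(j)}_{\eps} := F^{(j)}$ constant in $\eps$, use that the $\Gamma$-limit of a constant sequence is its lower semicontinuous envelope, apply Theorem~\ref{theo:Gamma-closure-single-Garding}, and then note that $\Gamma\text{-}\lim_{j} F^{(j)} = \Gamma\text{-}\lim_{j} \lsc F^{(j)}$. The paper states this last fact without justification, whereas you supply the standard diagonal argument.
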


\begin{figure}[h!]
\null\hspace{5.3cm}
\begindc{\commdiag}[500]
\obj(0,2)[aa]{$ F^{ (j) } $}
\obj(2,2)[bb]{$ F^{ (\infty) } $}
\obj(0,0)[cc]{$ \lsc F^{ (j) } $}
\obj(2,0)[dd]{$ \lsc F^{ (\infty) } $}
\mor{aa}{bb}{$ \approx $}[\atleft,\equalline]
\mor{aa}{cc}{$ \Gamma $}[\atleft,1]
\mor{bb}{dd}{$ \Gamma $}[\atleft,1]
\mor{cc}{dd}{$ \Gamma $}[\atleft,1]
\mor{aa}{dd}{$ \Gamma $}[\atleft,1]
\mor{cc}{dd}{ptw.}[\atright,1]
\enddc\caption{\label{fig:relaxation}Schematic diagram of the assumptions (solid lines) and consequences (dashed) in Theorem \ref{theo:relaxation}. }
\end{figure}

\begin{proof}
Let $F^{(j)}_{\eps} = F^{(j)}$ for all $j \in \N \cup \{\infty\}$ and $\eps > 0$. These sequences are constant in $\eps$ and so $\Gamma(L^p)\text{-}\lim_{\eps \to 0} F^{(j)}_{\eps} = \lsc F^{(j)}$. A direct application of Theorem \ref{theo:Gamma-closure-single-Garding} thus yields 
\[ \Gamma(L^p)\text{-}\lim_{j \to \infty} \lsc F^{(j)} 
   = \lim_{j \to \infty} \lsc F^{(j)} 
   = \lsc F^{ (\infty) }. \] 
Noting that $\Gamma(L^p)\text{-}\lim_{j \to \infty} F^{(j)} = \Gamma(L^p)\text{-}\lim_{j \to \infty} \lsc F^{(j)}$ finishes the proof. 
\end{proof}

\subsection{Commutability of $\Gamma$-limits}

In the general case with a doubly indexed family of functionals considered in the closure theorems above and schematically illustrated in Figure \ref{fig:Gamma-closure} the natural questions arise if the $\Gamma$-limits as $\eps \to 0$ and as $j \to \infty$ commute and if the limiting functional is also a simultaneous limit of $F^{(j_k)}_{\eps_k}$ as $k \to \infty$. In general this is not the case as the example 
\[ f^{(j)}_{\eps}(x, X) 
   = \begin{cases} 
       |X|^p, & \text{if } \eps < \tfrac{1}{j}, \\ 
       2|X|^p, & \text{if } \eps \ge \frac{1}{j},
     \end{cases} 
   \quad \text{for } j \in \N, \qquad 
   f^{(\infty)}_{\eps}(x, X) = |X|^p 
   \quad \forall\, \eps \] 
shows. 

However, again as a direct application of our closure results, we obtain that a stronger notion of equivalence does in fact imply commutability of these $\Gamma$-limits. This stronger condition in particular is satisfied if 
\[ \lim_{j \to \infty} \sup_{\eps > 0} \int_{U} 
   \sup_{|X| \le R} | f^{(j)}_{\eps}(x, X) - f^{(\infty)}_{\eps}(x , X) | \, dx = 0 \]
and thus holds true in our applications to homogenization to be discussed below. In the following it suffices to consider a single domain $\Omega$. 

\begin{theo}[Commutability]\label{theo:commutability}
Let $\Omega \subset \R^n$ be bounded and open. Suppose that the family of functionals $F^{(j)}_{\eps}$, $ j \in \N \cup \{ \infty \}$, $ \eps > 0$, with densities $f^{(j)}_{\eps} : \Omega \times \M{m}{n}$ is of uniform $p$-G{\r a}rding type on $\Omega$. Assume that
\begin{itemize} 
\item[(i)] For each $j < \infty$ the $\Gamma$-limit $\Gamma(L^p)\text{-}\lim_{\eps \to 0} F^{(j)}_{\eps} =: F^{(j)}_0$ exists. 

\item[(ii)] The families $((f^{(j)}_{\eps})_{\eps > 0})_{j \in \N}$ and $(f^{(\infty)}_{\eps})_{\eps > 0}$ are equivalent on $\Omega$ and moreover 
\[ \lim_{j \to \infty} \int_{\Omega} 
   \sup_{|X| \le R} | f^{(j)}_{\eps}(x, X) - f^{(\infty)}_{\eps}(x , X) | \, dx = 0 \]
for every $R > 0$ and all $\eps > 0$.
\end{itemize}

Then $\Gamma(L^p)\text{-}\lim_{\eps \to 0} F^{(\infty)}_{\eps} = \Gamma(L^p)\text{-}\lim_{j \to \infty} F^{(j)}_{0} =: F^{(\infty)}_0$. Moreover, $\Gamma(L^p)\text{-}\lim_{j \to \infty} F^{(j)}_{\eps} = \lsc F^{(\infty)}_{\eps}$ and 
\[ \Gamma(L^p)\text{-}\lim_{\eps \to 0} \Big( \Gamma(L^p)\text{-}\lim_{j \to \infty} F^{(j)}_{\eps} \Big)
   = \Gamma(L^p)\text{-}\lim_{j \to \infty} \Big( \Gamma(L^p)\text{-}\lim_{\eps \to 0} F^{(j)}_{\eps} \Big), \]
i.e., the following diagram commutes: 
\begin{figure}[h!]
\null\hspace{5.3cm}
\begindc{\commdiag}[500]
\obj(0,2)[aa]{$ F^{ (j) }_{\eps} $}
\obj(2,2)[bb]{$ \lsc F^{ (\infty) }_{\eps} $}
\obj(0,0)[cc]{$ F^{ (j) }_{0} $}
\obj(2,0)[dd]{$ F^{ (\infty) }_{0} $}
\mor{aa}{bb}{$ \Gamma $}
\mor{aa}{cc}{$ \Gamma $}
\mor{bb}{dd}{$ \Gamma $}
\mor{cc}{dd}{$ \Gamma $}
\enddc
\end{figure}
\end{theo}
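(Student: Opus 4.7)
The plan is to obtain each of the three assertions as a direct consequence of results already established earlier in the paper.

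First, the identity $\Gamma(L^p)\text{-}\lim_{\eps\to 0} F^{(\infty)}_{\eps} = \Gamma(L^p)\text{-}\lim_{j\to\infty} F^{(j)}_{0} =: F^{(\infty)}_{0}$, together with the existence of the $\Gamma$-limit on the left-hand side, I would read off immediately from Theorem \ref{theo:Gamma-closure-single-Garding}. The point is that the strengthened closeness hypothesis in (ii) is obviously stronger than the plain equivalence required there (one can pull the $\limsup_{\eps}$ out under the single limit $\lim_j$), so nothing new has to be proved at this stage.

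Second, the identity $\Gamma(L^p)\text{-}\lim_{j\to\infty} F^{(j)}_{\eps} = \lsc F^{(\infty)}_{\eps}$ I would obtain by applying the relaxation result (Theorem \ref{theo:relaxation}) for each fixed $\eps > 0$ to the $j$-indexed family of integral functionals with densities $f^{(j)}_{\eps}$. Uniform $p$-G{\r a}rding is inherited from the assumptions of the theorem, and the hypothesis $\lim_{j\to\infty}\int_{\Omega}\sup_{|X|\le R}|f^{(j)}_{\eps}-f^{(\infty)}_{\eps}|\,dx = 0$ is precisely the strong convergence appearing in (ii). Theorem \ref{theo:relaxation} then delivers exactly the desired formula.

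Third, for the commutativity of the square I would note that the down-then-right path is by Step~1 equal to $F^{(\infty)}_{0}$, while the right-then-down path, by Step~2, equals $\Gamma(L^p)\text{-}\lim_{\eps\to 0}\lsc F^{(\infty)}_{\eps}$. It therefore suffices to show that taking $L^p$-lower semicontinuous envelopes commutes with the $\Gamma$-limit as $\eps\to 0$, i.e.\ that
\[ \Gamma(L^p)\text{-}\lim_{\eps\to 0}\lsc F^{(\infty)}_{\eps} = \Gamma(L^p)\text{-}\lim_{\eps\to 0} F^{(\infty)}_{\eps} = F^{(\infty)}_{0}. \]
The $\Gamma$-$\limsup$ half is immediate from $\lsc F^{(\infty)}_{\eps}\le F^{(\infty)}_{\eps}$, since any recovery sequence for the right-hand side is a competitor with lower energy for the left-hand side. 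For the $\Gamma$-$\liminf$ half, given $u_{\eps}\to u$ in $L^{p}(\Omega;\R^{m})$ I would, by the very definition of the lsc envelope, pick $v_{\eps}$ with $\|v_{\eps}-u_{\eps}\|_{L^p}\le\eps$ and $F^{(\infty)}_{\eps}(v_{\eps})\le \lsc F^{(\infty)}_{\eps}(u_{\eps})+\eps$, note that still $v_{\eps}\to u$ in $L^p$, and then invoke the $\Gamma$-$\liminf$ inequality from Step~1 to conclude that $\liminf_{\eps\to 0}\lsc F^{(\infty)}_{\eps}(u_{\eps})\ge F^{(\infty)}_{0}(u)$.

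The proof is essentially bookkeeping: all the analytic content is hidden in Theorems \ref{theo:Gamma-closure-single-Garding} and \ref{theo:relaxation}. The only substantive (but mild) step is the stability of the $\Gamma$-limit under passage to lsc envelopes in Step~3, which is where the asymmetric appearance of $\lsc F^{(\infty)}_{\eps}$ in the diagram is accounted for.
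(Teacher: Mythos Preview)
Your proof is correct and follows exactly the approach of the paper: first invoke Theorem~\ref{theo:Gamma-closure-single-Garding}, then Theorem~\ref{theo:relaxation} at fixed $\eps$, and finally use that the $\Gamma$-limit of a sequence coincides with the $\Gamma$-limit of its lower semicontinuous envelopes. The paper dispatches this last point in a single sentence (``it remains to note that $\Gamma(L^p)\text{-}\lim_{\eps\to 0} F^{(\infty)}_{\eps} = \Gamma(L^p)\text{-}\lim_{\eps \to 0} \lsc F^{(\infty)}_{\eps}$''), treating it as a standard fact about $\Gamma$-convergence, whereas you spell out the easy diagonal argument; either is fine. One small remark: your parenthetical that the fixed-$\eps$ condition in (ii) implies the equivalence condition is not correct in general (pointwise convergence in $\eps$ does not control the $\limsup_{\eps}$), but this is harmless here since the theorem explicitly assumes both.
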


\begin{proof}
The first assertion follows immediately from Theorem \ref{theo:Gamma-closure-single-Garding}. Next, Theorem \ref{theo:relaxation} applied to $F^{(j)}_{\eps}$ with fixed $\eps$ gives $\Gamma(L^p)\text{-}\lim_{j \to \infty} F^{(j)}_{\eps} = \lsc F^{(\infty)}_{\eps}$. It remains to note that $\Gamma(L^p)\text{-}\lim_{\eps \to 0} F^{(\infty)}_{\eps} = \Gamma(L^p)\text{-}\lim_{\eps \to 0} \lsc F^{(\infty)}_{\eps}$. 
\end{proof}

\begin{figure}[h!]
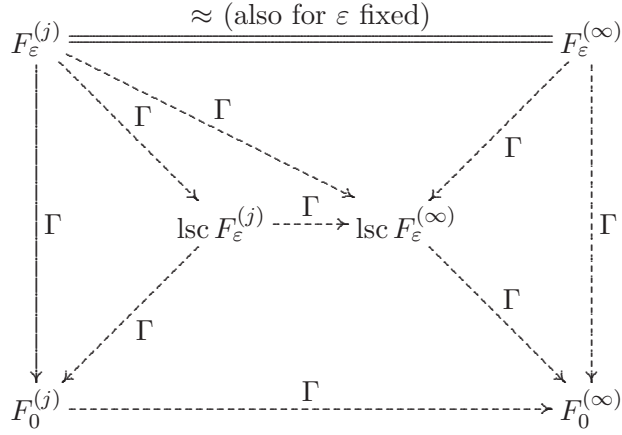

\null\hspace{3.3cm}
\begindc{\commdiag}[700]
\obj(0,2)[aa]{$ F^{ (j) }_{\eps} $}
\obj(3,2)[bb]{$ F^{ (\infty) }_{\eps} $}
\obj(1,1)[cc]{$ \lsc F^{ (j) }_{\eps} $}
\obj(2,1)[dd]{$ \lsc F^{ (\infty) }_{\eps} $}
\obj(0,0)[ee]{$ F^{ (j) }_{0} $}
\obj(3,0)[ff]{$ F^{ (\infty) }_{0} $}
\mor{aa}{bb}{$ \approx $ (also for $ \eps $ fixed)}[\atleft,\equalline]
\mor{aa}{cc}{$ \Gamma $}[\atleft,1]
\mor{bb}{dd}{$ \Gamma $}[\atleft,1]
\mor{cc}{dd}{$ \Gamma $}[\atleft,1]
\mor{aa}{dd}{$ \Gamma $}[\atleft,1]
\mor{cc}{ee}{$ \Gamma $}[\atleft,1]
\mor{dd}{ff}{$ \Gamma $}[\atleft,1]
\mor{aa}{ee}{$ \Gamma $}
\mor{bb}{ff}{$ \Gamma $}[\atleft,1]
\mor{ee}{ff}{$ \Gamma $}[\atleft,1]
\enddc\caption{Schematic diagram of the assumptions (solid lines) and consequences (dashed) in Theorem \ref{theo:commutability}. The arrows indicate $\Gamma$-convergence with respect to $j$, $\eps$ or of the constant sequence with fixed indices (the relaxation). }
\end{figure}

In fact, under the condition 
\[ \lim_{j \to \infty} \sup_{\eps > 0} \int_{U} 
   \sup_{|X| \le R} | f^{(j)}_{\eps}(x, X) - f^{(\infty)}_{\eps}(x , X) | \, dx = 0 \]
we also have $ \Gamma $-convergence along any diagonal sequence $(\eps_k, j_k)_{k \in \N}$. 

\begin{theo}[Simultaneous limits]\label{theo:simultaneous-limits}
Let $\Omega \subset \R^n$ be bounded and open. Suppose that the family of functionals $F^{(j)}_{\eps}$, $ j \in \N \cup \{ \infty \}$, $ \eps > 0$, with densities $f^{(j)}_{\eps} : \Omega \times \M{m}{n}$ is of uniform $p$-G{\r a}rding type on $\Omega$. Let $(j_k)_{k \in \N}$ and $(\eps_k)_{k \in \N}$ be subsequences of $j$ and $\eps$, respectively. Assume that $\Gamma(L^p)\text{-}\lim_{\eps \to 0} F^{(\infty)}_{\eps} = F^{(\infty)}_0$ and 
\[ \lim_{k \to \infty} \int_{\Omega} 
   \sup_{|X| \le R} | f^{(j_k)}_{\eps_k}(x, X) - f^{(\infty)}_{\eps_k}(x , X) | \, dx = 0 \]
for every $R > 0$ and all $\eps > 0$. Then $\Gamma(L^p)\text{-}\lim_{k \to \infty} F^{(j_k)}_{\eps_k} = F^{(\infty)}_0$. 
\end{theo}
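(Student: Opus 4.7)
The plan is to establish both the $\liminf$- and the recovery inequality for the diagonal sequence $F^{(j_k)}_{\eps_k}$ against the target functional $F^{(\infty)}_0$, by adapting the proof of Theorem~\ref{theo:Gamma-closure-single-Garding} to the coupled situation. The central linking idea is, as there, to use the equiintegrability Lemma~\ref{lemma:equi-int} to truncate high gradients, exchange $f^{(j_k)}_{\eps_k}$ for $f^{(\infty)}_{\eps_k}$ at a cost controlled by the diagonal equivalence hypothesis, and then to exploit the already-known $\Gamma$-convergence $F^{(\infty)}_\eps \to F^{(\infty)}_0$. The $\liminf$-inequality will be proved on the full index set; the recovery inequality only on arbitrary sub-subsequences, whence Theorem~\ref{theo:Urysohn} (pointwise Urysohn property) upgrades it to the full sequence.

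For the $\liminf$-inequality, let $u_k \to u$ in $L^p(\Omega;\R^m)$. The non-trivial case is $u \in W^{1,p}$ with $F^{(j_k)}_{\eps_k}(u_k)$ bounded along a subsequence (otherwise either the $\liminf$ is trivially $+\infty$, or G{\r a}rding plus weak convergence in $W^{1,p}$ forces $u\in W^{1,p}$ a posteriori). The uniform G{\r a}rding bound then gives $W^{1,p}$-boundedness of $(u_k)$, and Lemma~\ref{lemma:equi-int} extracts a further subsequence $(k_l)$ and modifications $v_l \in W^{1,p}(\Omega;\R^m)$ with $v_l \weakly u$ in $W^{1,p}$, $(|\nabla v_l|^p)$ equiintegrable, and $|A_l|\to 0$ for $A_l := \{\nabla u_{k_l}\ne \nabla v_l\}$. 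Splitting $\Omega$ along $A_l$ and $E_l^M := \{|\nabla v_l|\ge M\}$ and chaining the lower bounds verbatim as in Step~2 of the proof of Theorem~\ref{theo:Gamma-closure-single}, one arrives at
\[
F^{(j_{k_l})}_{\eps_{k_l}}(u_{k_l}) \ge F^{(\infty)}_{\eps_{k_l}}(v_l) - \int_\Omega \sup_{|X|\le M}\bigl|f^{(j_{k_l})}_{\eps_{k_l}}(x,X) - f^{(\infty)}_{\eps_{k_l}}(x,X)\bigr|\,dx - |A_l|\beta(M^p+2) - \int_{E_l^M}\beta(|\nabla v_l|^p+1)\,dx - \beta|E_l^M|.
\]
Given $\eta>0$, first choose $M$ so large that the last two terms are bounded by $\eta$ uniformly in $l$ (via equiintegrability and the uniform $L^p$-bound on $\nabla v_l$); then send $l\to\infty$: the diagonal equivalence hypothesis kills the $\sup$-integral, $|A_l|\to 0$, and Rellich plus the $\Gamma$-convergence $F^{(\infty)}_\eps\to F^{(\infty)}_0$ yield $\liminf_l F^{(\infty)}_{\eps_{k_l}}(v_l) \ge F^{(\infty)}_0(u)$. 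Since $\eta$ was arbitrary, the $\liminf$-inequality follows.

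For the recovery inequality it suffices to treat $u\in W^{1,p}$. Starting from a recovery sequence $u_\eps\to u$ for $F^{(\infty)}_\eps\to F^{(\infty)}_0$, G{\r a}rding and convergence of energies make $(u_{\eps_k})$ bounded in $W^{1,p}$, so Lemma~\ref{lemma:equi-int} delivers $v_l$ on a subsequence $k_l$ that agrees with $u_{\eps_{k_l}}$ off a set $A_l$ with $|A_l|\to 0$ and otherwise has the usual equiintegrability and weak-convergence properties. A symmetric computation, this time employing the upper growth bound in place of the $-\beta$ bound, gives
\[
F^{(j_{k_l})}_{\eps_{k_l}}(v_l) \le F^{(\infty)}_{\eps_{k_l}}(u_{\eps_{k_l}}) + \int_\Omega \sup_{|X|\le M}\bigl|f^{(j_{k_l})}_{\eps_{k_l}} - f^{(\infty)}_{\eps_{k_l}}\bigr|\,dx + |A_l|\beta(M^p+1) + \int_{E_l^M}\beta(|\nabla v_l|^p+1)\,dx + \beta|A_l \cup E_l^M|,
\]
and the same $M,l$-limit yields $\limsup_l F^{(j_{k_l})}_{\eps_{k_l}}(v_l) \le F^{(\infty)}_0(u)$, so $(v_l)$ is a recovery sequence along $(k_l)$. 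Since this construction runs from any initial subsequence of $k$, combining it with the $\liminf$-inequality on the full sequence and invoking Theorem~\ref{theo:Urysohn} delivers $\Gamma(L^p)\text{-}\lim_k F^{(j_k)}_{\eps_k} = F^{(\infty)}_0$ pointwise.

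The principal obstacle is the familiar ordering-of-limits subtlety in the truncation/equivalence trade-off: the cut-off level $M$ must be fixed first so that equiintegrability controls the truncation error uniformly in $l$, and only afterwards can $l\to\infty$ be taken to let the diagonal hypothesis drive the mismatch integral to zero. This is precisely why the weaker non-uniform-in-$\eps$ equivalence of Theorem~\ref{theo:Gamma-closure-single-Garding} would not suffice and why the statement demands the diagonal form of the hypothesis. The concluding Urysohn step is harmless because the $\liminf$-inequality holds on the full sequence without extraction, so nothing can be lost when descending to sub-subsequences during the recovery construction.
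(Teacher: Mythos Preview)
Your proof is correct, but the paper's is a one-liner: it simply applies Theorem~\ref{theo:perturbation} (Perturbation) with the sequences $F_{\eps} := F^{(\infty)}_{\eps_k}$ and $G_{\eps} := F^{(j_k)}_{\eps_k}$, reindexed by $k$. The diagonal hypothesis is precisely the perturbation hypothesis of Theorem~\ref{theo:perturbation}, and $\Gamma$-convergence of $F^{(\infty)}_{\eps_k}$ to $F^{(\infty)}_0$ is inherited from the assumed $\Gamma$-convergence of the full family $F^{(\infty)}_{\eps}$. What you have done instead is to unpack the chain Theorem~\ref{theo:Gamma-closure-single} $\to$ Theorem~\ref{theo:Gamma-closure-single-Garding} $\to$ Theorem~\ref{theo:perturbation} and re-derive the truncation/equiintegrability estimates directly for the diagonal sequence. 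This is self-contained and has the minor virtue of working directly in the G{\r a}rding setting without the $\lambda_k|X|^p$-addition trick of Proposition~\ref{prop:addition-trick} (which is legitimate, since the estimates of Theorem~\ref{theo:Gamma-closure-single} only use $-\beta \le f \le \beta(|X|^p+1)$ together with G{\r a}rding for $W^{1,p}$-boundedness, not the full lower $p$-growth), but it is considerably more laborious than recognising the statement as a perturbation result already in hand.
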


\begin{proof}
This is nothing but Theorem \ref{theo:perturbation} applied to $F^{(j_k)}_{\eps_k}$ and $F^{(\infty)}_{\eps_k}$. 
\end{proof}

Note that $\Gamma(L^p)\text{-}\lim_{\eps \to 0} F^{(\infty)}_{\eps} = F^{(\infty)}_0$ is known to exist under the assumptions of Theorem \ref{theo:Gamma-closure-variable-Garding}.

\subsection{Homogenization of G{\r a}rding type functionals}

As a corollary to our $\Gamma$-closure and commutability theorems we first obtain a homogenization closure theorem which generalizes the corresponding result of Braides to integral densities which are not assumed to be quasiconvex in their second argument and which instead of a standard $p$-growth assumption are only assumed to induce G{\r a}rding type integral functionals. Secondly we also obtain a general criterion for the interchangeability of homogenizing and taking the $\Gamma$-limit of a sequence of functionals. In view of our applications in the following sections we state the closure theorem on variable domains and with densities bounded below by a singled density inducing a G{\r a}rding inequality. Specific situations in which the assumptions of the following results are known to be satisfied will be discussed in a later section. 

For Borel functions $f^{(\infty)}, f^{(1)}, f^{(2)}, \ldots : \R^n \times \M{m}{n} \to \R$ and all $\eps > 0$ we consider the integral functionals $F^{(j)}_{\eps} : L^p(\R^n; \R^m) \times {\cal A}(\R^n) \to \R \cup \{+ \infty\}$ which for $u \in W^{1, p}(\R^n; \R^m)$ and $U \in {\cal A}(\R^n)$ are given by 
\begin{align*}
  F^{(j)}_{\eps}(u, U) 
  = \int_U f^{(j)} \Big( \frac{x}{\eps}, \nabla u(x) \Big) \, dx 
\end{align*}
and take the value $+ \infty$ otherwise. For the Borel functions $f^{(\infty)}_{\rm hom}, f^{(1)}_{\rm hom}, f^{(2)}_{\rm hom}, \ldots : \M{m}{n} \to \R$ we define $F^{(j)}_{\rm hom} : L^p(\R^n; \R^m) \times {\cal A}(\R^n) \to \R \cup \{+ \infty\}$ by 
\begin{align*}
  F^{(j)}_{\rm hom}(u, U) 
  = \int_U f^{(j)}_{\rm hom} (\nabla u(x)) \, dx 
\end{align*}
for $u \in W^{1, p}(\R^n; \R^m)$, $U \in {\cal A}(\R^n)$ and $+ \infty$ otherwise.

\begin{theo}[Homogenization closure]
\label{theo:homogenization-closure}
Let us for each $ j \in \N \cup \{ \i \} $ have a Borel function $ f^{(j)}: \R^n \times \M{m}{n} \to \R $ 
such that there are a constant $\beta > 0$ and a Borel function $g: \M{m}{n} \to \R$ with 
\[ g(X) \le f^{(j)}(x,X) \le \beta( |X|^{p} + 1 ) \] 
for all $x \in \R$, $X \in \M{m}{n}$ and $j \in \N \cup \{\infty\}$. Assume that for all $U \in {\cal A}(\R^n)$ there are $\alpha_U > 0, \gamma_U$ such that 
\[ \int_{U} g(\nabla u(x)) \, dx 
   \ge \alpha_U \int_{U} |\nabla u(x)|^p \, dx - \gamma_U \int_{U} |u(x)|^p \, dx \]
for $u \in W^{1, p}(U; \R^m)$. Suppose that 
\begin{itemize}
\item[(i)] for every $j \in \N$ the function $ f^{(j)} $ is homogenizable, i.e., there exists a Borel function $f^{(j)}_{\rm hom} : \M{m}{n} \to \R $ such that
\[ \Gamma(L^p)\text{-}\lim_{\eps \to 0} F^{(j)}_{\eps}(\cdot, U) 
   = F^{(j)}_{\rm hom}(\cdot, U) \]
for every $ U \in { \cal A }( \R^{n} ) $ and 
\item[(ii)] for every $R > 0$ 
\[ \lim_{j \to \infty} \limsup_{T \to \infty} \dashint_{(-T, T)^n} \sup_{|X| \le R} 
   |f^{(j)}(x, X) - f^{(\infty)}(x, X)| \ dx 
   = 0. \]
\end{itemize}

Then $ f^{(\infty)} $ is also homogenizable, say $\Gamma(L^p)\text{-}\lim_{\eps \to 0} F^{(\infty)}_{\eps}(\cdot, U) = F^{(\infty)}_{\rm hom}(\cdot, U)$ for $ U \in { \cal A }( \R^{n} ) $. The limiting density $f^{(\infty)}_{\rm hom}$ is given by 
\begin{align*}
  f^{(\infty)}_{\rm hom}(X) 
  = \lim_{k \to \infty} \inf \bigg\{ \dashint_{(0, k)^n} f^{(\infty)}(x, X + \nabla u(x)) \ dx : u \in W^{1, p}_0((0, k)^n; \R^m) \bigg\} 
\end{align*}
and satisfies $f^{(\infty)}_{\rm hom}(X) = \lim_{j \to \infty} f^{(j)}_{\rm hom}(X)$ for every $X \in \M{m}{n}$. 
\end{theo}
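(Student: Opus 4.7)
The plan is to reduce the statement to an application of Theorem \ref{theo:Gamma-closure-variable-Garding} for the rescaled family
\[ f^{(j)}_{\eps}(x,X) := f^{(j)}(x/\eps, X), \qquad j \in \N \cup \{\infty\}, \; \eps > 0. \]
Two conditions have to be checked: the uniform $p$-G{\r a}rding type bound on every $U \in \mathcal{A}(\R^n)$, and equivalence of the families $((f^{(j)}_\eps)_{\eps > 0})_{j \in \N}$ and $(f^{(\infty)}_\eps)_{\eps > 0}$. The upper bound $f^{(j)}_\eps(x,X) \le \beta(|X|^p+1)$ is immediate. The G{\r a}rding inequality follows from the pointwise lower bound by $g$, since $F^{(j)}_\eps(u,U) \ge \int_U g(\nabla u)\,dx \ge \alpha_U \|\nabla u\|_{L^p(U)}^p - \gamma_U \|u\|_{L^p(U)}^p$ with constants independent of $j$ and $\eps$.

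For equivalence I would fix $U \in \mathcal{A}(\R^n)$ and pick $T_0 > 0$ with $U \subset (-T_0,T_0)^n$. The change of variables $y = x/\eps$ yields
\[ \int_U \sup_{|X| \le R} |f^{(j)}_\eps(x,X) - f^{(\infty)}_\eps(x,X)|\,dx \le (2T_0)^n \dashint_{(-T_0/\eps, T_0/\eps)^n} \sup_{|X| \le R} |f^{(j)}(y,X) - f^{(\infty)}(y,X)|\,dy, \]
and since $T_0/\eps \to \infty$ as $\eps \to 0$, hypothesis (ii) supplies the required decay after $\limsup_{\eps \to 0}$ and $\lim_{j \to \infty}$. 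Theorem \ref{theo:Gamma-closure-variable-Garding} then produces $\Gamma(L^p)\text{-}\lim_{\eps \to 0} F^{(\infty)}_\eps(\cdot,U) = F^{(\infty)}_0(\cdot,U)$ with a Carath{\'e}odory density $f^{(\infty)}_0$ and weak-$*$ convergence $f^{(j)}_{\rm hom}(\cdot,X) \weakstar f^{(\infty)}_0(\cdot, X)$ in $L^\infty(U)$. Since each $f^{(j)}_{\rm hom}$ is $x$-independent, the weak-$*$ limit must also be constant in $x$. Setting $f^{(\infty)}_{\rm hom}(X) := f^{(\infty)}_0(\cdot,X)$ proves homogenizability of $f^{(\infty)}$ and gives the pointwise identification $f^{(\infty)}_{\rm hom}(X) = \lim_{j \to \infty} f^{(j)}_{\rm hom}(X)$.

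The remaining task is the cell formula. I would fix a cube $U = (0,\rho)^n$ with $\rho$ so small that Proposition \ref{prop:coercivity-small-domains} ensures equi-coercivity for the problem with boundary datum $\ell_X$. Theorem \ref{theo:Gamma-closure-boundary-values} provides $\Gamma$-convergence of $\bar F^{(\infty)}_\eps$ to $\bar F^{(\infty)}_0$, and combining this with equi-coercivity gives convergence of infima:
\[ \inf_{v \in W^{1,p}_0(U)} \int_U f^{(\infty)}(x/\eps, X + \nabla v(x))\,dx \to \min_{v \in W^{1,p}_0(U)} \int_U f^{(\infty)}_{\rm hom}(X + \nabla v(x))\,dx \quad \mbox{as } \eps \to 0. \]
Quasiconvexity of $f^{(\infty)}_{\rm hom}$ (it is the density of a $\Gamma$-limit) identifies the right-hand minimum as $f^{(\infty)}_{\rm hom}(X)\rho^n$. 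Choosing $\eps = \rho/k$ and rescaling via $y = x/\eps$, $\tilde v(y) := v(\eps y)/\eps \in W^{1,p}_0((0,k)^n)$, the integrand on the left transforms to $\rho^n \dashint_{(0,k)^n} f^{(\infty)}(y, X + \nabla \tilde v(y))\,dy$, and dividing by $\rho^n$ gives the claimed formula.

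The main obstacle I anticipate is bridging the centered cube-average condition (ii) to the equivalence condition of Theorem \ref{theo:Gamma-closure-variable-Garding}, which is formulated for an arbitrary bounded open set $U$; the displayed estimate above resolves this at the cost of a factor depending only on $U$. Beyond this, the only delicate point is ensuring convergence of infima in the derivation of the cell formula, where without boundary values there is no equi-coercivity for G{\r a}rding type functionals at all -- this is precisely why Proposition \ref{prop:coercivity-small-domains} and Theorem \ref{theo:Gamma-closure-boundary-values} need to be invoked in tandem on a sufficiently small cube.
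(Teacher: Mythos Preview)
Your proposal is correct and follows essentially the same route as the paper: you apply Theorem \ref{theo:Gamma-closure-variable-Garding} to the rescaled densities, verify equivalence via the same change of variables yielding the factor $(2T_0)^n$, and deduce $x$-independence of $f^{(\infty)}_0$ from weak-$*$ convergence of the constant functions $f^{(j)}_{\rm hom}$. For the cell formula you inline precisely the argument the paper packages as Proposition \ref{prop:representation-formula}, namely equicoercivity on a small set via Proposition \ref{prop:coercivity-small-domains}, $\Gamma$-convergence with boundary values via Theorem \ref{theo:Gamma-closure-boundary-values}, quasiconvexity of the limiting density, and the rescaling $\eps = \rho/k$; the paper states this for general $U \in \mathcal{A}(\R^n)$ rather than cubes, but your specialization to $(0,\rho)^n$ is exactly what the theorem needs.
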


Except for the representation formula of $f^{(\infty)}_{\rm hom}$ this is a direct consequence of Theorem \ref{theo:Gamma-closure-variable-Garding}. The representation formula is implied by the following proposition which in turn is a consequence of Theorem \ref{theo:Gamma-closure-boundary-values} and Proposition \ref{prop:coercivity-small-domains}. 

\begin{prop}[Representation formula]
\label{prop:representation-formula}
Suppose $f: \R^n \times \M{m}{n} \to \R$ is a Borel function satisfying the growth assumptions with $\beta$ and $g$ as in Theorem \ref{theo:homogenization-closure}. If there exists $f_{\rm hom} : \M{m}{n} \to \R$ such that 
\[ \Gamma(L^p)\text{-}\lim_{\eps \to 0} F_{\eps}(\cdot, U) 
   = F_{\rm hom}(\cdot, U) \] 
for all $U \in {\cal A}(\R^{n})$, then 
\begin{align*}
  f_{\rm hom}(X) 
  = \lim_{k \to \infty} \inf \bigg\{ \dashint_{ k U } f(x, X + \nabla \varphi(x)) \ dx : \varphi \in W^{1, p}_0( k U ; \R^m ) \bigg\} 
\end{align*}
for every $X \in \M{m}{n}$, $U \in {\cal A}(\R^{n})$.
\end{prop}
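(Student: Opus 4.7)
The plan is to recognise the infimum appearing on the right-hand side as the (rescaled) minimum of the Dirichlet problem for $F_{\eps}$ on $U$ with affine boundary data $\ell_X(x) := Xx$, and then to identify its limit via the $\Gamma$-convergence $F_{\eps}(\cdot,U) \to F_{\rm hom}(\cdot,U)$. Fix $X \in \M{m}{n}$ and $U \in {\cal A}(\R^n)$, and set
$$m_{\eps}(X,U) := \inf\{F_{\eps}(u,U) : u \in \ell_X + W^{1,p}_0(U;\R^m)\}.$$
The substitution $u(x) = \ell_X(x) + \eps\varphi(x/\eps)$ with $\varphi \in W^{1,p}_0(U/\eps;\R^m)$, combined with the change of variables $y = x/\eps$, yields
$$\frac{m_{\eps}(X,U)}{|U|} = \inf\bigg\{\dashint_{U/\eps} f(y, X + \nabla\varphi(y))\, dy : \varphi \in W^{1,p}_0(U/\eps;\R^m)\bigg\}$$
for every $\eps > 0$. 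Writing $k = 1/\eps$, the claim therefore reduces to showing $m_{\eps}(X,U)/|U| \to f_{\rm hom}(X)$ as $\eps \to 0$.

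First I would treat $U$ sufficiently small that Proposition \ref{prop:coercivity-small-domains} applies, so that $F_{\eps}(u,U) \ge A\|\nabla u\|_{L^p(U)}^p - B$ for every $u \in \ell_X + W^{1,p}_0(U;\R^m)$, with $A>0$ and $B$ independent of $\eps$. Since the lower bound $f \ge g$ forces the family $f(\cdot/\eps,\cdot)$ to inherit the Gårding inequality of $g$ on $U$ (and the upper bound $f(x,X) \le \beta(|X|^p+1)$ is preserved under rescaling of the first argument), Lemma \ref{lemma:boundary-values} applies and yields $\Gamma(L^p)\text{-}\lim_{\eps \to 0}\bar F_{\eps}(\cdot,U) = \bar F_{\rm hom}(\cdot,U)$, where $\bar F$ denotes the restriction to $\ell_X + W^{1,p}_0(U;\R^m)$. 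Equicoercivity combined with $\Gamma$-convergence forces convergence of the minimum values, so that $m_{\eps}(X,U) \to \min\bar F_{\rm hom}(\cdot,U)$. Since $f_{\rm hom}$ is quasiconvex (as the density of a $\Gamma$-limit) and independent of $x$, the affine map $\ell_X$ minimises $\bar F_{\rm hom}(\cdot,U)$; hence the minimum equals $|U|f_{\rm hom}(X)$, and dividing by $|U|$ gives the desired formula in the small-$U$ case.

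To treat an arbitrary $U \in {\cal A}(\R^n)$, pick $c>0$ so that $cU$ falls into the scope of the previous step. The substitution $v(x) = c\,\tilde v(x/c)$ sets up a bijection between $\ell_X + W^{1,p}_0(cU;\R^m)$ and $\ell_X + W^{1,p}_0(U;\R^m)$ and gives $m_{\eps}(X,cU) = c^n m_{\eps/c}(X,U)$; consequently $m_{\eps}(X,cU)/|cU| = m_{\eps/c}(X,U)/|U|$, and letting $\eps \to 0$ propagates the formula from $cU$ to the original $U$. The main obstacle is the equicoercivity step in the middle paragraph: Gårding's inequality only controls $\int_U|\nabla u|^p$ up to a penalising $\int_U|u|^p$ term, so genuine equicoercivity becomes available only after the Dirichlet condition $\ell_X$ has been imposed on a sufficiently small domain, where Poincaré's inequality can absorb the bad term — precisely the content of Proposition \ref{prop:coercivity-small-domains}, with the scaling argument above transferring the resulting formula to every $U$.
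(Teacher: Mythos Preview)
Your proof is correct and follows essentially the same approach as the paper's: rewrite the infimum as the (rescaled) Dirichlet minimum $m_\eps(X,U)/|U|$, invoke Proposition~\ref{prop:coercivity-small-domains} to get equicoercivity on a sufficiently small domain, use the compatibility of boundary values with $\Gamma$-convergence (Lemma~\ref{lemma:boundary-values}, which the paper cites via Theorem~\ref{theo:Gamma-closure-boundary-values}) to obtain convergence of minima, and then identify the minimum of $\bar F_{\rm hom}$ as $|U|f_{\rm hom}(X)$ by quasiconvexity. The only cosmetic difference is in the order of operations: the paper shrinks $U$ to $U' = U/K$ and chooses $\eps_k = 1/(kK)$ so that the change of variables lands directly on $kU$, whereas you first establish the formula on small $U$ and then transfer it to arbitrary $U$ via the scaling identity $m_\eps(X,cU)/|cU| = m_{\eps/c}(X,U)/|U|$; these are the same argument.
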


\begin{proof}
Fix any $U \in {\cal A}(\R^n)$. Let $ \Omega $ be an open ball centered in 0 such that $ U \subset \Omega $. 
Take $X \in \M{m}{n}$ and set $u_0(x) = \ell_X(x) = Xx$. By Proposition \ref{prop:coercivity-small-domains} the functionals $\bar{F}_{\eps}(\cdot, U')$ are then equicoercive on $W^{1, p}(U'; \R^m)$
for $ U' := \frac{1}{K} U $ with $ K > 0 $ sufficiently large. Quasiconvexity of $f_{\rm hom}$ and Theorem \ref{theo:Gamma-closure-boundary-values} thus imply that for $\eps_k = \frac{1}{kK}$
\begin{align*}
  f_{\rm hom}(X)
  &= \min \{ \tfrac{1}{|U'|}  F_{\rm hom}(u, U') : u \in \ell_X + W^{1, p}_0 (U' ; \R^m) \} \\  
  &= \min \{ \tfrac{1}{|U'|} \bar{F}_{\rm hom}(u, U') : u \in L^p(U'; \R^m) \} \\ 
  &= \lim_{k \to \infty} \inf \{ \tfrac{1}{|U'|} \bar{F}_{\eps_k}(u, U') : u \in L^p(U' ; \R^m) \} \\ 
  &= \lim_{k \to \infty} \inf \bigg\{ \dashint_{U'} f (k K x, \nabla u(x)) \, dx : u \in \ell_X + W^{1,p}_{0}(U'; \R^m) \bigg \} \\ 
  &= \lim_{k \to \infty} \inf \bigg\{ \dashint_{kU} f (x, X + \nabla \varphi(x)) \, dx : \varphi \in W^{1,p}_{0}(kU; \R^m) \bigg \}. \qedhere
\end{align*}
\end{proof}

\begin{proof}[Proof of Theorem \ref{theo:homogenization-closure}]
Let 
\begin{align*}
  f^{(j)}_{\eps}(x, X) = f^{(j)} \Big( \frac{x}{\eps}, X \Big) 
\end{align*}
for $j \in \N \cup \{ \infty \}$ and $\eps > 0$ and set 
\begin{align*}
  f^{(j)}_{0}(x, X) := f^{(j)}_{\rm hom} (X) 
\end{align*}
for $j \in \N$. For $U \in {\cal A}(\R^n)$ with $U \subset (-T_0, T_0)^n$ and $R > 0$ we then have 
\begin{align*}
  &\limsup_{\eps \to 0} \int_U \sup_{|X| \le R} \big| f^{(j)}_{\eps}(x, X) - f^{(\infty)}_{\eps}(x , X) \big| \, dx \\ 
  &\le \limsup_{\eps \to 0} \int_{(-T_0, T_0)^n} \sup_{|X| \le R} \Big| f^{(j)} \Big(\frac{x}{\eps}, X \Big) - f^{(\infty)} \Big( \frac{x}{\eps}, X \Big) \Big| \, dx \\ 
  &= \limsup_{\eps \to 0} \eps^n \int_{(-T_0/\eps, T_0/\eps)^n} \sup_{|X| \le R} \big| f^{(j)} (x, X) - f^{(\infty)} (x, X) \big| \, dx \\ 
  &= (2T_0)^{n} \limsup_{T \to \infty} \dashint_{(-T, T)^n} \sup_{|X| \le R} \big| f^{(j)} (x, X) - f^{(\infty)} (x, X) \big| \, dx \\ 
  &= 0.
\end{align*}

Thus we may apply Theorem \ref{theo:Gamma-closure-variable-Garding} and deduce that there exists a Borel function $ f^{(\infty)}_{0} : \R^n \times \M{m}{n} \to \R $ and corresponding integral functional $F^{(\infty)}_{0}$ such that 
\begin{align*}
  \Gamma(L^p)\text{-}\lim_{\eps \to 0} F^{(\infty)}_{\eps}(\cdot, U)
   = F^{(\infty)}_0(\cdot, U)
\end{align*}
for all $U \in {\cal A}(\R^n)$ and moreover 
\[ f^{(j)}_0 (\cdot, X) \weakstar f^{(\infty)}_0(\cdot, X) \] 
in $L^{\infty}(\R^n)$ for all $X \in \M{m}{n}$. Since $f^{(j)}_{0}(x, X) = f^{(j)}_{\rm hom} (X)$ is independent of $x$ we in fact have that $f^{(\infty)}_{0}(x, X) = f^{(\infty)}_{\rm hom}(X)$ for some Borel function $f^{(\infty)}_{\rm hom}$ for a.e.\ $x \in \R^n$ and all $X \in \M{m}{n}$. (As a quasiconvex function $f^{(\infty)}_{\rm hom}$ is even continuous.) In particular, 
\[ f^{(j)}_{\rm hom} (X) \to f^{(\infty)}_{\rm hom}(X) \] 
for all $X \in \M{m}{n}$. 
\end{proof}

We finally give an equivalence condition adapted to homogenization leading to a $\Gamma$-commuting diagram as in Theorem \ref{theo:commutability}. The analogous adaption of Theorem \ref{theo:simultaneous-limits} is straightforward.

\begin{theo}
\label{theo:homogenization-commutability}
Suppose $(f^{(j)})_{j \in \N}$ is a sequence of Borel functions satisfying the assumptions of Theorem \ref{theo:homogenization-closure} and in addition 
\[ \lim_{j \to \infty} \int_{(-T, T)^n} \sup_{|X| \le R} 
   |f^{(j)}(x, X) - f^{(\infty)}(x, X)| \ dx 
   = 0 \]
for every $ T \in \N $ and $ X \in \M{m}{n} $. 

Then the limit $ f^{(\infty)}( x , \cdot ) $ is also homogenizable and the following diagram commutes for every $ U \in {\cal A}( \R^{n} ) $:
\begin{figure}[h!]
\null\hspace{5.3cm}
\begindc{\commdiag}[500]
\obj(0,2)[aa]{$ F^{ (j) }_{ \eps }( \cdot , U ) $}
\obj(2,2)[bb]{$ \lsc F^{ (\i) }_{ \eps }( \cdot , U ) $}
\obj(0,0)[cc]{$ F^{ (j) }_{\hom}( \cdot , U ) $}
\obj(2,0)[dd]{$ F^{(\i)}_{\hom}( \cdot , U ) $}
\mor{aa}{bb}{$ \Gamma $}[\atleft,1]
\mor{aa}{cc}{$ \Gamma $}
\mor{bb}{dd}{$ \Gamma $}[\atleft,1]
\mor{cc}{dd}{$ \Gamma $}[\atleft,1]
\enddc
\end{figure}
\end{theo}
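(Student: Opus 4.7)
The plan is to reduce the assertion to the commutability result Theorem \ref{theo:commutability}, applied separately to each $U \in {\cal A}(\R^n)$ playing the role of the bounded domain $\Omega$ there. In other words, I shall verify that the family $F^{(j)}_{\eps}(\cdot,U)$, $j \in \N \cup \{\infty\}$, $\eps > 0$, satisfies (on $U$) the uniform $p$-G\r{a}rding condition, the existence of $\Gamma$-limits as $\eps \to 0$ for finite $j$, the standard equivalence of densities, and the strengthened equivalence condition of hypothesis (ii) of Theorem \ref{theo:commutability}; the conclusion then follows from that theorem combined with the identifications already obtained in Theorem \ref{theo:homogenization-closure}.

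First, I check that the uniform $p$-G\r{a}rding property and standard equivalence survive the rescaling $y = x/\eps$. The upper bound $f^{(j)}(y,X) \le \beta(|X|^p+1)$ is invariant under this substitution, while the lower bound $g(X) \le f^{(j)}(y,X)$ together with the coercivity assumed for $g$ in Theorem \ref{theo:homogenization-closure} yields $F^{(j)}_{\eps}(u,U) \ge \alpha_U \|\nabla u\|_{L^p(U)}^p - \gamma_U \|u\|_{L^p(U)}^p$ uniformly in $j$ and $\eps$. The existence of $\Gamma(L^p)$-$\lim_{\eps\to 0} F^{(j)}_{\eps}(\cdot,U) = F^{(j)}_{\hom}(\cdot,U)$ for $j \in \N$ is part of the hypotheses of Theorem \ref{theo:homogenization-closure}, and the $\limsup$-type equivalence on $U$ has already been established in its proof via the rescaling
\[ \limsup_{\eps \to 0} \int_{U} \sup_{|X| \le R} |f^{(j)}_{\eps}(x,X) - f^{(\infty)}_{\eps}(x,X)|\,dx \le (2T_0)^n \limsup_{T \to \infty} \dashint_{(-T,T)^n} \sup_{|X| \le R} |f^{(j)}(y,X) - f^{(\infty)}(y,X)|\,dy. \]

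Next comes the only substantive step: verifying the strengthened equivalence required by Theorem \ref{theo:commutability}, i.e.\ that for every $R > 0$ and every fixed $\eps > 0$,
\[ \lim_{j \to \infty} \int_{U} \sup_{|X| \le R} |f^{(j)}_{\eps}(x,X) - f^{(\infty)}_{\eps}(x,X)|\,dx = 0. \]
Pick $T_0 > 0$ with $U \subset (-T_0, T_0)^n$ and set $T := \lceil T_0/\eps \rceil \in \N$. Then $U/\eps \subset (-T, T)^n$, and the change of variables $y = x/\eps$ gives
\[ \int_{U} \sup_{|X| \le R} |f^{(j)}(x/\eps,X) - f^{(\infty)}(x/\eps,X)|\,dx \le \eps^n \int_{(-T,T)^n} \sup_{|X| \le R} |f^{(j)}(y,X) - f^{(\infty)}(y,X)|\,dy. \]
With $\eps$ fixed, the right-hand side vanishes as $j \to \infty$ by the new hypothesis applied to the fixed integer $T$. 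This is the only genuinely new ingredient; the rest is bookkeeping.

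With all hypotheses of Theorem \ref{theo:commutability} verified on each $U \in {\cal A}(\R^n)$, that theorem yields the commutativity of the square: the $\eps \to 0$ limits along the two horizontal sides are $F^{(j)}_{\hom}(\cdot,U)$ and $F^{(\infty)}_{\hom}(\cdot,U)$ respectively (using Theorem \ref{theo:homogenization-closure} to identify the $j = \infty$ row as the homogenized functional), while the $j \to \infty$ limit along the top side is $\lsc F^{(\infty)}_{\eps}(\cdot,U)$ by the relaxation identification built into Theorem \ref{theo:commutability} (equivalently Theorem \ref{theo:relaxation}); the bottom $j \to \infty$ limit is the pointwise and $\Gamma$-limit $F^{(\infty)}_{\hom}(\cdot,U)$ already furnished by Theorem \ref{theo:homogenization-closure}. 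Thus the diagram commutes, completing the proof.
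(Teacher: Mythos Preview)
Your proof is correct and follows exactly the route the paper takes: the paper's proof consists of the single sentence ``This is an immediate consequence of Theorem~\ref{theo:commutability},'' and you have simply spelled out the verification of its hypotheses in detail, including the one genuinely new step (the change of variables showing that the fixed-$\eps$ equivalence on $U$ follows from the new integral hypothesis on $(-T,T)^n$).
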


\begin{remark}
Note that if $f^{(j)}( x , \cdot ) \to f^{(\infty)}( x , \cdot )$ locally uniformly for almost every $ x \in \R^{n} $, then indeed 
\[ \lim_{j \to \infty} \int_{(-T, T)^n} \sup_{|X| \le R} 
   |f^{(j)}(x, X) - f^{(\infty)}(x, X)| \ dx 
   = 0 \]
for every $ T \in \N $ and $ X \in \M{m}{n} $. Vice versa, if this condition is satisfied, then there is a subsequence $ ( f^{(j_{k})} )_{k \in \N} $ such that $f^{(j_{k})}( x , \cdot ) \to f^{(\i)}( x , \cdot )$ locally uniformly almost everywhere. 
\end{remark}

\begin{proof}[Proof of Theorem \ref{theo:homogenization-commutability}.]
This is an immediate consequence of Theorem \ref{theo:commutability}. 
\end{proof}

\section{Applications in elasticity theory}\label{section:Elasticity}

In this chapter we discuss some applications of our abstract results to elasticity theory. If $y \in W^{1, p}(\Omega; \R^n)$ is a deformation of a hyperelastic body with reference configuration $\Omega \subset \R^n$, a bounded domain with Lipschitz boundary, and stored energy function $W : \Omega \times \M{n}{n} \to \R$, then the elastic energy of $y$ is given as 
\[ E(y) 
   = \int_{\Omega} W(x, \nabla y(x)) \, dx. \] 
Our homogenization closure and commutability results now shed light on the effective behavior of elastic materials in the limiting regimes 1.\ of highly oscillatory material properties (measured in terms of a small parameter $\eps$) and 2.\ of small displacements (measured in terms of $\delta \ll 1$). As $\eps \to 0$, we are led to a homogenization problem in nonlinear elasticity as has been studied in \cite{Braides:85, Mueller}, while the limit $\delta \to 0$ corresponds to a linearization of the energy functional, cf.\ \cite{DalMasoNegriPercivale,Schmidt}. Here we will allow for an explicit dependence of the energy density on $\delta$. This will enable us to also study effective functionals for stored energy functions with multiple wells, separated by a distance of the order of the displacements. As our limiting energy functional in this case is still nonlinear, the limit $\delta \to 0$ actually corresponds to a geometric linearization only. 

\subsection{Functionals for microstructured multiwell materials}

We consider the family of functionals $({\cal I}_{\eps, \delta})_{ \eps , \delta > 0 } $ given
for $ u \in W^{1,p}( \Omega; \R^{n} ) $ by
\[ {\cal I}_{ \eps , \delta }(u) 
:= \frac{1}{ \delta^{p} } \int_{ \Omega } W_{ \delta } \Big( \frac{x}{ \eps } , I + \delta \nabla u(x) \Big) \ dx , \]
and extended to $ L^{p}( \Omega; \R^{n} ) $ by $ +\infty $, the physically relevant cases being $n = 1,2,3$ and $p = 2$. Henceforth we will assume that $W_{\delta} : \R^n \times \M{n}{n} \to \R$ satisfies the physically reasonable objectivity condition $W_{\delta}(x, RX) = W_{\delta}(x, X)$ for all $x \in \Omega$, $X \in \M{n}{n}$ and $R \in {\rm SO}(n)$. 

For fixed $\delta$ the limit $\eps \to 0$ then leads to a classical homogenization problem rendering homogenized densities $W^{\rm hom}_{\delta}$ under suitable assumptions. 
For the limit $\delta \to 0$ with fixed $\eps$ let us first consider the standard single well case. Suppose that $I$ is (up to a rigid motion) the unique stress free strain and that $W_{\delta} = W$, not explicitly depending on $\delta$, has a single well structure. More precisely, let
\[ W(x, X) = 0 \iff X \in {\rm SO}(n)
   \quad \mbox{and} \quad
   W(x, X)
   \ge c \, \dist^p(X, {\rm SO}(n)) \quad \forall \, X \in \M{n}{n} \]
for a $c > 0$. For $p = 2$, sending $\delta \to 0$ under suitable regularity assumption leads to $\delta^{-2} W(\frac{x}{\eps}, I + \delta X) \to \frac{1}{2} D^2_X W(\frac{x}{\eps}, I)[X, X]$ and a corresponding energy functional in linear elasticity. By frame indifference, this density in fact only depends on the symmetric part $e(u) = X_{\sm} = \frac{X^T + X}{2}$ of the displacement gradient $\nabla u = X$. 

Here we would like to more generally also allow for multiwell energies whose minimizers are concentrated in a $\delta$-neighborhood of ${\rm SO}(n)$. We therefore impose a slightly milder non-degeneracy and growth condition in addition to our requirement of frame indifference: 
\begin{definition}
A family $W_{\delta}: \R^n \times \M{n}{n} \to \R$, $\delta > 0$, of Borel functions is said to be an admissible family of nonlinearly elastic stored energy functions if for a.e.\ $x \in \R^n$, all $X \in \M{n}{n}$ and $R \in {\rm SO}(n)$ and suitable $c, C > 0$ 
\begin{itemize}
\item[(i)] $W_{\delta}(x, RX) = W_{\delta}(x, X)$ and  
\item[(ii)] $c \, \dist^p(X, {\rm SO}(n)) - C \delta^p \le W_{\delta}(x, X) \le C(|X|^p + 1)$. 
\end{itemize}
\end{definition}

\begin{example}
In particular, this assumption is compatible with models for shape memory alloys in their martensitic phase, when the zero-set of $W_{\delta} \ge 0$ consists of multiple wells and is of the form 
\[ W_{\delta}(x, X) = 0 \iff X \in \bigcup_{i = 1}^N {\rm SO}(n) ( I + \delta U_i ) \] 
for given positive matrices $U_i \in \M{n}{n}_{\sm}$. It is not hard to see that our non-degeneracy assumption is satisfied if we require that $W_{\delta}(x, X) \ge c\, \dist^p\big(X, \bigcup_{i = 1}^N {\rm SO}(n) ( I + \delta U_i )\big)$. 
\end{example}

By way of contrast to the single well case, for multiwell energies $W_{\delta}$ one may only assume that $\delta^{-p} W_{\delta}(x; I + \delta X) \to V(x, X)$ for $X \in \M{n}{n}_{\sm}$, where $V$ is a linearly frame indifferent and non-degenerate multiwell energy density acting on symmetrized displacement gradients inducing a nonlinear limiting functional in (geometrically) linearized elasticity. Accordingly we define 
\[ {\cal I}_{\eps,0} (u) 
:= \int_{ \Omega } V \Big( \frac{x}{ \eps } , e(u)(x) \Big) \ dx , \]
for $u \in W^{1, p}(\Omega; \R^n)$ and extended to $ L^{p}( \Omega , \R^{n} ) $ by $ + \infty $ and summarize our admissibility assumptions on linear non-degeneracy, frame indifference and growth in the following definition.
\begin{definition}
A Borel function $V : \R^n \times \M{n}{n} \to \R$ is an admissible linearly elastic stored energy function if for a.e.\ $x \in \R^n$, all $X \in \M{n}{n}$ and suitable $c, C > 0$ 
\begin{itemize}
\item[(i)] $V(x, X) = V(x, X_{\sm})$ and 
\item[(ii)] $c |X_{\sm}|^p \le V(x, X) \le C(|X|^p + 1)$. 
\end{itemize}
\end{definition}

Whereas in the single well case the limiting density $\frac{1}{2} D^2_X W(\frac{x}{\eps}, I)[X, X]$ is quadratic and thus convex in $X$, the multiwell density $V$ in general will not even be quasiconvex. (For compatible matrices $U_1, \ldots, U_N$ in the above example the quasiconvex envelope on linear strains is strictly larger than $\{U_1, \ldots, U_N\}$.) However, since in solving variational problems for ${\cal I}_{\eps, 0}$ one may pass to its relaxation $\lsc {\cal I}_{\eps, 0}$, in order to justify that ${\cal I}_{\eps,0}$ is an adequate description of variational problems in the small displacement regime, we will therefore ask if $\Gamma(L^p)\text{-}\lim_{\delta \to 0} {\cal I}_{\eps, \delta} = \lsc {\cal I}_{\eps, 0}$. (Also cf.\ the discussion in \cite{Schmidt} for the $\eps$-independent case).  

Korn's inequality in Theorem \ref{theo:Korn} and its nonlinear counterpart Theorem \ref{theo:Korn-nonlinear}, which is based on the geometric rigidity estimate Theorem \ref{theo:geometric-rigidity} from \cite{FrieseckeJamesMueller}, provide the link to our theory developed for G{\r a}rding type functionals: 
\begin{lemma}\label{lemma:rigity-Garding}
Suppose that the $W_{\delta}, V : \R^n \times \M{n}{n} \to \R$ are admissible (non-)linearly elastic stored energy functions. Then the family of functionals ${\cal I}_{\eps, \delta}$ with $\eps > 0$ and $\delta \ge 0$ is of uniform $p$-G{\r a}rding type on every $U \in {\cal A}(\R^n)$. 
\end{lemma}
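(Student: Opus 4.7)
The plan is to verify the two conditions in the uniform $p$-Gårding definition separately: the pointwise density bounds $-\beta \le f(x,X) \le \beta(|X|^p + 1)$ uniform in the parameters, and the Gårding coercive inequality on $W^{1,p}(U;\R^n)$. The densities of ${\cal I}_{\eps,\delta}$ are $f_{\eps,\delta}(x,X) := \delta^{-p} W_{\delta}(x/\eps, I + \delta X)$ for $\delta > 0$ and $V(x/\eps, X)$ for $\delta = 0$. For the density bounds, the lower bound is immediate from admissibility: $W_{\delta} \ge -C\delta^p$, so $\delta^{-p} W_{\delta} \ge -C$, and $V \ge 0$. For the polynomial upper bound I would use frame indifference to rewrite $W_{\delta}(\cdot, I + \delta X) = W_{\delta}(\cdot, \sqrt{(I+\delta X)^T (I+\delta X)})$, whose argument differs from $I$ by at most $\dist(I + \delta X, {\rm SO}(n)) \le \delta |X|$; the admissibility upper bound then yields $W_{\delta}(\cdot, I + \delta X) \le C(\delta^p |X|^p + \delta^p)$, and dividing by $\delta^p$ gives $f_{\eps, \delta} \le \beta(|X|^p + 1)$. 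For $V$ the upper bound is direct.

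For the Gårding coercivity in the case $\delta = 0$, I would invoke classical Korn's inequality (Theorem \ref{theo:Korn}): since $V(x,X) = V(x, X_{\sm}) \ge c |X_{\sm}|^p$, one obtains
\[ {\cal I}_{\eps, 0}(u) \ge c \int_U |e(u)|^p \, dx \ge \alpha_U \int_U |\nabla u|^p \, dx - \gamma_U \int_U |u|^p \, dx. \]
The substantive step is the case $\delta > 0$, where the nonlinear Korn inequality (Theorem \ref{theo:Korn-nonlinear}), itself a consequence of the Friesecke-James-Müller geometric rigidity estimate (Theorem \ref{theo:geometric-rigidity}), enters. Applied to $y(x) = x + \delta u(x)$ on $U$, Theorem \ref{theo:Korn-nonlinear} supplies $\delta$-uniform constants $C_U, C_U' > 0$ with
\[ \delta^p \int_U |\nabla u|^p \, dx \le C_U \int_U \dist^p(I + \delta \nabla u, {\rm SO}(n)) \, dx + C_U' \delta^p \int_U |u|^p \, dx. \]
Combining this with the admissibility lower bound $W_{\delta}(\cdot, Y) \ge c\, \dist^p(Y, {\rm SO}(n)) - C\delta^p$ and dividing by $\delta^p$ then yields ${\cal I}_{\eps, \delta}(u) \ge \alpha_U \int_U |\nabla u|^p - \gamma_U \int_U |u|^p$, as required.

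The hard part will be the correct invocation of Theorem \ref{theo:Korn-nonlinear}: the FJM rigidity estimate produces a rotation $R$ with $\int_U |I + \delta \nabla u - R|^p \le C \int_U \dist^p(I + \delta \nabla u, {\rm SO}(n))$, but $R$ itself need not be close to $I$, so a plain triangle inequality only controls $\int_U |\delta \nabla u - (R - I)|^p$. The key observation underlying the nonlinear Korn inequality is that the deviation $R - I$ is itself controlled by the mean of $\delta u$, which is exactly what produces the characteristic $\gamma_U \int_U |u|^p$ lower-order term and accounts for the fact that ${\cal I}_{\eps, \delta}$ is only Gårding-coercive rather than fully $W^{1,p}$-coercive. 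Everything else is bookkeeping with the admissibility bounds.
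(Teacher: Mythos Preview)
Your treatment of the G{\r a}rding inequality is correct and matches the paper exactly: Korn's inequality (Theorem~\ref{theo:Korn}) for $\delta=0$, and the nonlinear Korn inequality (Theorem~\ref{theo:Korn-nonlinear}) applied to $\delta u$ for $\delta>0$. The pointwise lower bound $f_{\eps,\delta}\ge -C$ is also right.

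There is, however, a genuine gap in your derivation of the uniform upper bound $f_{\eps,\delta}(x,X)\le\beta(|X|^p+1)$. The admissibility upper bound reads $W_\delta(x,Y)\le C(|Y|^p+1)$, which applied to $Y=I+\delta X$ gives only
\[
W_\delta(x,I+\delta X)\le C\big(|I+\delta X|^p+1\big),
\]
and after dividing by $\delta^p$ this blows up as $\delta\to 0$. Passing to the symmetric factor $P=\sqrt{(I+\delta X)^T(I+\delta X)}$ via frame indifference does not help: one has $|P|=|I+\delta X|$ (they differ by a rotation), so the bound is unchanged. Your inequality $|P-I|\le\delta|X|$ is correct, but the admissibility upper bound controls $W_\delta$ in terms of $|P|$, not $|P-I|$; nothing in the stated hypotheses gives $W_\delta(x,P)\le C(|P-I|^p+\delta^p)$.

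In fact the uniform upper bound does \emph{not} follow from admissibility alone: take $W_\delta(x,Y)=\dist^p(Y,{\rm SO}(n))+1$, which is admissible, yet $\delta^{-p}W_\delta(x,I)=\delta^{-p}\to\infty$. The paper's own two-line proof is silent on this point and addresses only the G{\r a}rding inequality; the upper growth is effectively an extra standing assumption (and is forced anyway by the convergence hypothesis (ii) in Theorem~\ref{theo:multiwells}, where the lemma is actually used). So the gap is not in your strategy but in trying to extract from admissibility something it does not contain.
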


\begin{proof}
For $\delta = 0$ this is immediate from Theorem \ref{theo:Korn}. For $\delta > 0$ it follows from Theorem \ref{theo:Korn-nonlinear} applied to $\delta u$. 
\end{proof}

\subsection{Homogenization and geometric linearization}

Our main result is the following theorem on homogenization and geometric linearization for multiwell energy functionals. In particular we will see that these limiting processes commute and may be taken simultaneously. As it turns out, the (geometric) linearization and commutability results in \cite{DalMasoNegriPercivale,Schmidt,MuellerNeukamm,GloriaNeukamm} are direct consequences that theorem. 

\begin{theo}[Homogenization closure and commutability]
\label{theo:multiwells}
Suppose that the $W_{\delta}, V : \R^n \times \M{n}{n} \to \R$ are admissible (non-)linearly elastic stored energy functions. Assume that 
\begin{itemize}
\item[(i)] for every $\delta > 0$ the function $W_{\delta}$ is homogenizable, i.e., there exists a Borel function $W_{\delta}^{\rm hom} : \M{n}{n} \to \R $ such that
\[ \Gamma(L^p)\text{-}\lim_{\eps \to 0} {\cal I}_{\eps, \delta}(\cdot, U) 
   = {\cal I}_{\delta}^{\rm hom}(\cdot, U) \]
for every $ U \in { \cal A }( \R^{n} ) $ and 
\item[(ii)] for every $R > 0$ 
\[ \lim_{\delta \to 0} \sup_{T > 0} \dashint_{(-T, T)^n} \sup_{X \in \M{n}{n}_{\sm} \atop |X| \le R} 
   |\delta^{-p} W_{\delta}(x, I + \delta X) - V(x, X)| \ dx 
   = 0 \]
and 
\[ \lim_{|Z| \to 0} \sup_{T > 0} \dashint_{(-T, T)^n} \sup_{|X| \le R} |V(x, X + Z) - V(x, X)| \, dx 
   = 0. \]
\end{itemize}

Then also $V$ is homogenizable, say $\Gamma(L^p)\text{-}\lim_{\eps \to 0} {\cal I}_{\eps,0}(\cdot, U) =: {\cal I}_0^{\rm hom}(\cdot, U)$ for $ U \in { \cal A }( \R^{n} ) $. The limiting density $V^{\rm hom}$ is given by 
\begin{align*}
  V^{\rm hom}(X) 
  = \lim_{k \to \infty} \inf \bigg\{ \dashint_{(0, k)^n} V(x, X + e(u)(x)) \ dx : u \in W^{1, p}_0((0, k)^n; \R^n) \bigg\} 
\end{align*}
and satisfies $V^{\rm hom}(X) = \lim_{\delta \to 0} \delta^{-p} W_{\delta}^{\rm hom}(I + \delta X)$ for every $X \in \M{n}{n}$. Moreover, also every simultaneous limit $\eps_k, \delta_k \to 0$ satisfies 
\[ \Gamma(L^p)\text{-}\lim_{k \to \infty} {\cal I}_{\eps_k,\delta_k} 
   = {\cal I}_0^{\rm hom} \] 
and the following diagram commutes for every $ U \in {\cal A}( \R^{n} ) $: 
\begin{figure}[h!]
\null\hspace{5.3cm}
\begindc{\commdiag}[500]
\obj(0,2)[aa]{$ {\cal I}_{\eps,\delta}(\cdot, U) $}
\obj(2,2)[bb]{$ \lsc {\cal I}_{\eps, 0}(\cdot , U) $}
\obj(0,0)[cc]{$ {\cal I}_{\delta}^{\rm hom}(\cdot, U) $}
\obj(2,0)[dd]{$ {\cal I}_0^{\rm hom}(\cdot, U) $}
\mor{aa}{bb}{$ \Gamma $}[\atleft,1]
\mor{aa}{cc}{$ \Gamma $}
\mor{bb}{dd}{$ \Gamma $}[\atleft,1]
\mor{cc}{dd}{$ \Gamma $}[\atleft,1]
\enddc
\end{figure}\\
Here 
\[ \lsc {\cal I}_{\eps, 0}(u, U) 
   = \int_U V^{\qc}\Big( \frac{x}{\eps}, e(u)(x) \Big) \, dx \]
for $u \in W^{1, p}(\Omega; \R^n)$ and $+ \infty$ otherwise, where $V^{\qc}$ denotes the quasiconvex envelope of $V$ in its second argument. 
\end{theo}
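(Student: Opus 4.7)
The plan is to cast this theorem within the framework of the abstract homogenization closure and commutability results (Theorems \ref{theo:homogenization-closure} and \ref{theo:homogenization-commutability}). Pick any null sequence $\delta_j \searrow 0$ and set $f^{(j)}(x, X) := \delta_j^{-p} W_{\delta_j}(x, I + \delta_j X)$ for $j \in \N$ together with $f^{(\infty)}(x, X) := V(x, X)$, so that $F^{(j)}_{\eps}(u, U) = {\cal I}_{\eps, \delta_j}(u, U)$ and $F^{(\infty)}_{\eps}(u, U) = {\cal I}_{\eps, 0}(u, U)$ in the notation of Section \ref{section:Gamma-closure}. Lemma \ref{lemma:rigity-Garding} then places us in the uniform $p$-G{\r a}rding setting on every $U \in {\cal A}(\R^n)$, so I would work with the G{\r a}rding versions of the abstract theory.

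The heart of the argument is to verify the equivalence condition
\[ \lim_{j \to \infty} \int_{(-T, T)^n} \sup_{|X| \le R} \big|f^{(j)}(x, X) - V(x, X)\big|\, dx = 0 \qquad \forall\, T, R > 0, \]
which is the homogenization analogue of the commutability hypothesis required in Theorem \ref{theo:homogenization-commutability}. Assumption (ii) of the theorem only controls symmetric arguments, so frame indifference is needed to reduce to that case. For $|X| \le R$ and $\delta$ small enough, the polar decomposition $I + \delta X = R_{\delta, X}(I + \delta Y_{\delta, X})$ with $R_{\delta, X} \in {\rm SO}(n)$ and $Y_{\delta, X} \in \M{n}{n}_{\sm}$, together with the identity $(I + \delta Y_{\delta, X})^2 = I + 2\delta X_{\sm} + \delta^2 X^T X$, yields by an elementary perturbation computation the estimate $|Y_{\delta, X} - X_{\sm}| \le C \delta R^2$ (and in particular $|Y_{\delta, X}| \le R + 1$). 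Since $f^{(j)}(x, X) = f^{(j)}(x, Y_{\delta_j, X})$ by frame indifference and $V(x, X) = V(x, X_{\sm})$ by admissibility, the triangle inequality splits the integrand into a symmetric-argument term handled by the first part of assumption (ii) (applied with $R + 1$ in place of $R$) and a continuity-of-$V$ term handled by the second part of assumption (ii).

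Once the equivalence is established, Theorems \ref{theo:homogenization-closure} and \ref{theo:homogenization-commutability} immediately deliver all remaining assertions: the homogenizability of $V$ with limit ${\cal I}_0^{\rm hom}$, the representation formula for $V^{\rm hom}$ via Proposition \ref{prop:representation-formula} (the infimum over $\nabla u$ collapses to one over $e(u)$ since $V$ is insensitive to skew-symmetric parts), the pointwise convergence $\delta^{-p} W_{\delta}^{\rm hom}(I + \delta X) \to V^{\rm hom}(X)$ (which is exactly the assertion $f^{(j)}_{\rm hom}(X) \to f^{(\infty)}_{\rm hom}(X)$ of the abstract theorem), $\Gamma$-convergence along arbitrary diagonal sequences via the homogenization analogue of Theorem \ref{theo:simultaneous-limits}, and the commuting diagram. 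The relaxation identity $\lsc {\cal I}_{\eps, 0}(u, U) = \int_U V^{\qc}(x/\eps, e(u)(x))\, dx$ then follows from the standard relaxation theorem for integral functionals of G{\r a}rding type applied to $V$, together with the fact that $V^{\qc}$ in its second argument coincides with the quasiconvex envelope taken over all matrices.

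I expect the main obstacle to be the upper growth bound for the rescaled densities: the admissibility estimate $W_{\delta}(x, X) \le C(|X|^p + 1)$ only gives $f^{(j)}(x, X) \le C \delta_j^{-p} + C |X|^p$, which blows up as $\delta_j \to 0$, so $f^{(j)}$ does not satisfy a uniform pointwise $p$-growth bound from above as demanded by Theorem \ref{theo:Gamma-closure-variable-Garding}. Following Remark \ref{remark:no-upper} I would circumvent this by invoking gradient-truncation approximations based on \cite[Proposition A.1]{FrieseckeJamesMueller} to replace recovery sequences by sequences with uniformly bounded gradients throughout the $\Gamma$-closure arguments; this first gives $F^{(\infty)}_0 = \lim_{j} F^{(j)}_0$ on $W^{1, \infty}$ and then, by density, the full $\Gamma$-convergence statement on $W^{1, p}$. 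The quantitative polar-decomposition estimate in the equivalence step is the other technical point that has to be executed carefully.
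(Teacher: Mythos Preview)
Your proposal is correct and follows essentially the same route as the paper's proof. Both arguments reduce to the abstract homogenization closure and commutability theorems via Lemma~\ref{lemma:rigity-Garding}, and both establish the required equivalence condition on general (not merely symmetric) matrices by polar decomposition and frame indifference: writing $I + \delta X = Q(I + \delta Y)$ with $Q \in {\rm SO}(n)$ and $Y$ symmetric, one has $\delta^{-p} W_\delta(x, I + \delta X) = \delta^{-p} W_\delta(x, I + \delta Y)$, and the estimate $|Y - X_{\sm}| = O(\delta |X|^2)$ then allows splitting $|\delta^{-p} W_\delta(x, I + \delta X) - V(x, X)|$ into one term controlled by the first part of assumption~(ii) and one by the second. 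The identification of $\lsc {\cal I}_{\eps,0}$ and the application of Theorems~\ref{theo:homogenization-closure}, \ref{theo:homogenization-commutability} and (the homogenization analogue of) \ref{theo:simultaneous-limits} are likewise the same.

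Your remark on the upper growth bound is in fact more careful than the paper. The admissibility assumption on $W_\delta$ gives only $\delta^{-p} W_\delta(x, I + \delta X) \le C\delta^{-p}(|I + \delta X|^p + 1)$, which is not uniformly of the form $\beta(|X|^p + 1)$ as required in Definition~\ref{definition:Garding}; the paper simply invokes Lemma~\ref{lemma:rigity-Garding} without comment on this point. Your proposed resolution via Remark~\ref{remark:no-upper} (gradient truncation \`a la \cite[Proposition~A.1]{FrieseckeJamesMueller}, first obtaining the pointwise limit on $W^{1,\infty}$ and then extending by density) is exactly the mechanism the paper has in mind there, and is the right way to close this gap.
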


\begin{remark}\label{remark:muwe}
\begin{enumerate}
\item Assumption (i) is known to be satisfied in two cases of special interest: periodic and stochastic homogenization. More precisely, if the $W_{\delta}$ are periodic in $x$, then the results on periodic homogenization of Braides \cite{Braides:85} and M{\"u}ller \cite{Mueller} directly apply to ${\cal I}_{\eps, \delta}$ for fixed $\delta$. Their results have been extended to the field of stochastic homogenization by Messaoudi and Michaille in \cite{MessaoudiMichaille} (based on the ideas in \cite{DalMasoModica}): Suppose that for every $ \xi $ from some probability space we have an admissible nonlinearly elastic stored energy function $ W_{\delta}^{\xi} $ such that the corresponding mapping $ \xi \mapsto {\cal I}_{1,\delta}^{\xi}(\cdot, U) $ is a random integral functional, periodic in law and ergodic. Then the functions $ W_{\delta}^{\xi} $ are uniformly homogenizable, i.e., for almost every $ \xi $ there exists a homogenized density $ W_{\delta}^{\hom} $, which moreover does not depend on $ \xi $. Therefore, Assumption (i) is satisfied for almost every $\xi$ along any null sequence $\delta \to 0$. (If $W_{\delta}^{\xi} = W^{\xi}$ does not explicitly depend on $\delta$, Assumption (i) is satisfied for every $\delta$ for almost every $\xi$.) We however remark that in \cite{MessaoudiMichaille} the authors assume the local Lipschitz condition
\begin{align}\label{Wdo-loc-Lip}
  |W_{\delta}^{\xi}(x, X) - W_{\delta}^{\xi}(x, X')|
  \le C(1 + |X|^{p-1} + |X'|^{p-1}) |X - X'|,
\end{align}
which may in fact be omitted, see \cite{Jesenko}. 

\item The second assumption in (ii) is a mild continuity assumption on $V$. If, e.g., $V$ is a Carath{\'e}odory function which is periodic in $x$, this condition automatically holds by the Scorza-Dragoni theorem. It is also easily seen to be satisfied if the first assumption in (ii) is true and the $W_{\delta}$ fulfill \eqref{Wdo-loc-Lip}. 

The equivalence condition in (ii) only requires that $\delta^{-p} W_{\delta}(x, I + \delta X)$ be close to $V(x, X)$ when $X$ is symmetric. This assumption is satisfied in particular if $\delta^{-p} W_{\delta}(x, I + \delta X) \to V(x, X)$ as $\delta \to 0$ uniformly in $x \in \R^n$ and uniformly in $X$ on compact subsets of $\M{n}{n}_{\sm}$. In particular this is true if $W_{\delta} = W$ admits a quadratic Taylor expansion as in \cite{DalMasoNegriPercivale,MuellerNeukamm,GloriaNeukamm}.

\item Combining the previous two remarks (and Remark \ref{remark:no-upper}) we observe that the (geometric) linearization and commutability results in \cite{DalMasoNegriPercivale,Schmidt,MuellerNeukamm,GloriaNeukamm} are implied by Theorem \ref{theo:multiwells}. 

\item A straightforward adaption of Theorem \ref{theo:Gamma-closure-boundary-values} yields a version of Theorem \ref{theo:multiwells} for the functionals $\bar{\cal I}_{\eps, \delta}$ with prescribed boundary values. 

\item\label{rem:boundary-values-compactness} The proof of Theorem \ref{theo:Gamma-closure-boundary-values} shows that in fact also the homogenized functionals ${\cal I}_{\delta}^{\rm hom}$ are of $p$-G{\r a}rding type for all $\delta \ge 0$. It is worth noting that Theorem \ref{theo:p-Zhang} implies that in fact the homogenized densities also satisfy the non-degeneracy assumption 
\[ W_{\delta}^{\rm hom}(X) 
   \ge c \, \dist^p(X, {\rm SO}(n)) - C \delta^p, 
   \quad\text{respectively}, \quad 
   V^{\rm hom}(X) 
   \ge |X_{\sm}|. \]
In combination with \cite[Proposition 3.4]{DalMasoNegriPercivale} this observation implies equicoercivity of the functionals ${\cal I}_{\delta}^{\rm hom}$ under prescribed boundary conditions, thus complementing our $\Gamma$-convergence result. 
\end{enumerate}
\end{remark}

\begin{proof}
Korn's inequality (see Theorem \ref{theo:Korn}) and Theorem \ref{theo:lsc} show that $\lsc {\cal I}_{\eps, 0}$ is an integral functional with density $V^{\qc}$. By Lemma \ref{lemma:rigity-Garding} the remaining assertions of this theorem are a direct consequence of Theorems \ref{theo:homogenization-closure}, \ref{theo:homogenization-commutability} and \ref{theo:simultaneous-limits} (adapted to homogenization) once we have established that in fact 
\[ \lim_{\delta \to 0} \sup_{T > 0} \dashint_{(-T, T)^n} \sup_{X \in \M{n}{n} \atop |X| \le R} 
   |\delta^{-p} W_{\delta}(x, I + \delta X) - V(x, X)| \ dx 
   = 0 \]
for any $R > 0$, where the supremum inside the integral is taken over bounded subsets in $\M{n}{n}$ rather than $\M{n}{n}_{\sm}$. To this end, we decompose $I + \delta X \in \M{n}{n}$ as $I + \delta X = Q Y$ where $ Q \in SO(n) $ and $ Y \in \M{n}{n}_{\sm} $ in such a way that $Y = \sqrt{ (I + \delta X)^{T} (I + \delta X) }$ for $\det (I + \delta X) > 0$. By frame indifference 
\[ \tfrac{1}{ \delta^{p} } W_{ \delta }( x , I + \delta X ) 
   = \tfrac{1}{ \delta^{p} } W_{ \delta }( x , I + \delta X_{\sm} + q(\delta X) ) \]
for $q(\delta X) = Y - I + \delta X_{\sm}$. Note that $|Y| = |I + \delta X|$ grows at most linearly with $|\delta X|$ for large $|\delta X|$ whereas for small $|\delta X|$ a Taylor expansion gives $|Y - I - \delta X_{\sm}| \le C |\delta X|^2$, so that 
\[ |q(\delta X)| \le C \min \{|\delta X|, |\delta X|^2\}. \] 
Now 
\begin{align*}
  \tfrac{1}{\delta^{p}} W_{\delta}(x, I + \delta X) - V(x, X) 
  &= \tfrac{1}{\delta^{p}} W_{\delta}(x, I + \delta X_{\sm} + q(\delta X)) 
    - V(x, X_{\sm} + \delta^{-1} q(\delta X)) \\ 
  &\qquad + V(x, X_{\sm} + \delta^{-1} q(\delta X)) - V(x, X_{\sm}),   
\end{align*}
where by assumption 
\[ \lim_{\delta \to 0} \sup_{T > 0} \dashint_{(-T, T)^n} \sup_{X \in \M{n}{n} \atop |X| \le R} 
   |\delta^{-p} W_{\delta}(x, I + \delta X_{\sm} + q(\delta X)) - V(x, X_{\sm} + \delta^{-1} q(\delta X))| \, dx 
   = 0 \]
and also 
\[ \lim_{\delta \to 0} \sup_{T > 0} \dashint_{(-T, T)^n} \sup_{X \in \M{n}{n} \atop |X| \le R} 
   |V(x, X_{\sm} + \delta^{-1} q(\delta X)) - V(x, X_{\sm})| \, dx 
   = 0 \]
for any $R > 0$. 
\end{proof}

\begin{appendix}

\section{Appendix}

\subsection{$\Gamma$-convergence}

For the convenience of the reader we recall the definition and some important properties of $\Gamma$-convergence that were used frequently in our proofs.

\begin{definition}
Let $ F_{j} : M \to [-\i,\i]$, $ j \in \N $ be a sequence of functionals on a metric space $ (M,d) $. 
For $ x \in M $ we define the $\Gamma$-$\liminf$ and $\Gamma$-$\limsup$ at $x$ as 
\begin{align*}
\Gamma(d)\text{-}\liminf_{j \to \i} F_{j}( x ) &:= \inf \Big\{ \liminf_{j \to \infty} F_{j}( x_{j} ) : x_{j} \to x \Big\}, \\
\Gamma(d)\text{-}\limsup_{j \to \i} F_{j}( x ) &:= \inf \Big\{ \limsup_{j \to \infty} F_{j}( x_{j} ) : x_{j} \to x \Big\}. 
\end{align*}
If these values are equal, then we call it the $ \Gamma $-limit of $ ( F_{j} )_{j \in \N} $ in $x$ and write 
$ \Gamma(d) \text{-} \lim_{ j \to \infty } F_{j}( x ) $. Equivalently, $F_{\infty}(x) = \Gamma(d) \text{-} \lim_{ j \to \infty } F_{j}( x ) $ if the following conditions are satisfied:
\begin{itemize}
\item[(i)] (lim inf-inequality) If $ x_{j} \to x $ in $M$ then
$ \liminf_{j \to \i} F_{j}( x_{j} ) \ge F_{\i}(x) $. 
\item[(ii)]  (recovery sequence) There exists a sequence $ x_{j} \to x $ in $M$ such that
$ \lim_{j \to \i} F_{j}( x_{j} ) = F_{\infty}(x) $.
\end{itemize}
We say that $ ( F_{j} )_{j \in \N} $ $ \Gamma $-converges to some functional $F_{\infty}$, if and only if it $ \Gamma $-converges to $F_{\infty}(x)$ at every $x \in M$. 
\end{definition}

This convergence possesses Urysohn property.
\begin{theo}[see, e.g., {\cite[Proposition 7.11]{BraidesDefranceschi}}]\label{theo:Urysohn}
Take $ \lambda \in [-\i,\i] $ and $ x \in M $. Then $ \lambda = \Gamma(d) \mbox{-} \lim_{ j \to \i } F_{j}( x ) $ if and only if
for every subsequence $ ( F_{ j_{k} } )_{k \in \N} $ there exists a further subsequence $ ( F_{ j_{k_{l}} } )_{l \in \N} $
such that $ \lambda = \Gamma(d) \mbox{-} \lim_{ l \to \i } F_{ j_{k_{l}} }( x ) $. 
\end{theo}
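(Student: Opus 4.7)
The forward direction $(\Rightarrow)$ is immediate from the definitions: if $F_\infty(x) = \Gamma(d)\text{-}\lim_j F_j(x) = \lambda$ and $(F_{j_k})_{k \in \N}$ is any subsequence, then both defining conditions pass to the subsequence. For the $\liminf$-inequality, any $y_k \to x$ extends to a full sequence (e.g.\ $y_j := x$ for $j \notin \{j_k\}$) whose full $\liminf$ is $\ge \lambda$ by assumption, and every subsequential $\liminf$ dominates the full $\liminf$. A recovery sequence for $(F_j)$ at $x$, restricted to the indices $j_k$, provides a recovery sequence for $(F_{j_k})$ at $x$.

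For the converse $(\Leftarrow)$, assume that every subsequence $(F_{j_k})$ admits a further subsequence $(F_{j_{k_l}})$ with $\Gamma(d)\text{-}\lim_l F_{j_{k_l}}(x) = \lambda$. I plan to establish separately that $\Gamma(d)\text{-}\liminf_j F_j(x) \ge \lambda$ and $\Gamma(d)\text{-}\limsup_j F_j(x) \le \lambda$, which together give $\Gamma(d)\text{-}\lim_j F_j(x) = \lambda$.

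The $\liminf$ inequality is a short contradiction argument. If some $x_j \to x$ satisfied $\mu := \liminf_j F_j(x_j) < \lambda$, extract a subsequence with $F_{j_k}(x_{j_k}) \to \mu$ and by hypothesis a further sub-subsequence $(j_{k_l})$ whose $\Gamma$-limit at $x$ is $\lambda$. Its own $\liminf$-inequality applied to $x_{j_{k_l}} \to x$ forces $\liminf_l F_{j_{k_l}}(x_{j_{k_l}}) \ge \lambda$, contradicting $\lim_l F_{j_{k_l}}(x_{j_{k_l}}) = \mu$.

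The $\limsup$ inequality is the main obstacle, because the hypothesis only supplies recovery sequences along sub-subsequences, while $\Gamma(d)\text{-}\limsup \le \lambda$ demands a recovery-type sequence indexed by all $j$. My plan is a near-minimizer construction based on the sub-claim that for every $\delta, \varepsilon > 0$ there exists $J \in \N$ with
\[
   \inf \{ F_j(y) : d(y,x) < \delta \} < \lambda + \varepsilon \qquad \text{for all } j \ge J.
\]
If this failed along some subsequence $(j_k)$, the hypothesis would produce a sub-subsequence $(j_{k_l})$ whose $\Gamma$-limit at $x$ is $\lambda$, hence a recovery sequence $y_l \to x$ with $F_{j_{k_l}}(y_l) \to \lambda$; eventually $d(y_l,x) < \delta$, forcing $\inf \{ F_{j_{k_l}}(y) : d(y,x) < \delta \} \le F_{j_{k_l}}(y_l) < \lambda + \varepsilon$ for large $l$, a contradiction. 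Granted the sub-claim, I pick integers $J_1 < J_2 < \cdots$ such that for each $j \ge J_m$ some $x_j$ with $d(x_j, x) < 1/m$ obeys $F_j(x_j) < \lambda + 1/m$, padding arbitrarily for $j < J_1$. Then $x_j \to x$ and $\limsup_j F_j(x_j) \le \lambda$, which yields $\Gamma(d)\text{-}\limsup_j F_j(x) \le \lambda$, completing the proof.
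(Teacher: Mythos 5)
The paper does not prove this theorem; it is quoted as a known fact from Braides--Defranceschi, Proposition~7.11. Your proof is correct and gives a self-contained, direct argument. The forward implication and the $\Gamma$-$\liminf$ half of the converse are standard; the only delicate point is exactly the one you identified, the $\Gamma$-$\limsup$ inequality, and your sub-claim (for every $\delta,\eps>0$ the near-minimal values $\inf\{F_j(y):d(y,x)<\delta\}$ eventually drop below $\lambda+\eps$) together with the diagonal near-minimizer construction handles it cleanly. The textbook route in Braides--Defranceschi is slightly different: one first records the representation $\Gamma\text{-}\liminf_j F_j(x)=\sup_{U\ni x}\liminf_j\inf_U F_j$ and $\Gamma\text{-}\limsup_j F_j(x)=\sup_{U\ni x}\limsup_j\inf_U F_j$ over open neighborhoods $U$ of $x$, and then invokes the numerical Urysohn property for the scalar sequences $j\mapsto\inf_U F_j$. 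That approach trades your explicit gluing argument for an auxiliary representation formula; your version is more elementary and arguably more transparent, at the cost of being a bit longer. One small remark: you implicitly work with $\lambda\in\R$ in the sub-claim ($\lambda+\eps$ is meaningless when $\lambda=\pm\infty$). The case $\lambda=+\infty$ makes the $\Gamma$-$\limsup$ bound trivial, and for $\lambda=-\infty$ the sub-claim should instead read ``$\inf\{F_j(y):d(y,x)<\delta\}<-K$ for $j$ large'' for every $K>0$; the identical contradiction argument then works. Stating this explicitly would complete the argument for $\lambda\in[-\infty,+\infty]$ as in the theorem.
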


On a separable metric space every sequence of functionals always contains a subsequence that $ \Gamma $-converges (cf., e.g., {\cite[Proposition 7.9]{BraidesDefranceschi}}). For a suitable class of integral functionals the following stronger compactness result holds:

\begin{theo}[see, e.g., {\cite[Theorem 12.5]{BraidesDefranceschi}}]\label{theo:standard-growth-Gamma-compactness}
Let $ f_{\eps} : \Omega \times \M{m}{n} \to \R, $ $ \eps > 0 $, be a family of Borel functions which for some $ \alpha , \beta > 0 $
satisfy the estimate
\[ \alpha |X|^{p} - \beta \le f_{\eps}(x,X) \le \beta( 1 + |X|^{p} ) \]
for all $ x \in \Omega $ and $ X \in \M{m}{n} $. Define for $ U \in {\cal A}( \Omega ) $ and $ u \in W^{1,p}( \Omega ; \R^{m} ) $
\[ F_{\eps}(u,U) := \int_{U} f_{\eps}( x , \nabla u(x) ) \ dx \]
and extend the definition to $ L^{p}( \Omega ; \R^{m} ) $ by $ \i $. Then, for every subsequence $ \eps_{j} \searrow 0 $ there
exists a further subsequence $ ( \eps_{ j_{k} } )_{ k \in \N } $ and a Carath\'{e}odory function $ \phi : \Omega \times \M{m}{n} \to \R $
satisfying the same growth estimate as $ ( f_{\eps} )_{\eps} $ such that 
\[ F_{0}( u , U ) 
   := \int_{U} \phi( x , \nabla u(x) ) \, dx 
   = \Gamma( L^{p} ) \mbox{-} \lim_{ k \to \infty } F_{ \eps_{ j_{k} } }( u , U ) \]
for all $u \in L^{p}(\Omega ; \R^{m})$ and $U \in {\cal A}( \Omega )$.
\end{theo}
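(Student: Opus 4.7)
The plan is to combine abstract $\Gamma$-compactness on the separable space $L^p(\Omega;\R^m)$ with the localization method of De Giorgi--Letta and an integral representation theorem of Buttazzo--Dal Maso type. The two ingredients that do the real work are (a) the ``fundamental estimate'' which allows us to patch together recovery sequences via cut-off functions, and (b) the characterization of local functionals as integrals of Carath\'eodory densities.

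First I would fix a countable family $\mathcal{R} \subset \mathcal{A}(\Omega)$ (say, finite unions of open cubes with rational vertices compactly contained in $\Omega$) that is rich enough to approximate every $U \in \mathcal{A}(\Omega)$ from within in the sense that for every $U \in \mathcal{A}(\Omega)$ and every $V \Subset U$ there is some $W \in \mathcal{R}$ with $V \Subset W \Subset U$. Since $L^p(\Omega;\R^m)$ is separable and the sequential $\Gamma$-convergence on a separable metric space is compact, a diagonal extraction along $\mathcal{R}$ yields a subsequence $(\eps_{j_k})_{k \in \N}$ and a functional $F_0(\cdot,W)$ for each $W \in \mathcal{R}$ with $\Gamma(L^p)\text{-}\lim_{k \to \infty} F_{\eps_{j_k}}(\cdot,W) = F_0(\cdot,W)$. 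From the two-sided $p$-growth of $f_\eps$ one immediately obtains that $F_0(\cdot,W)$ takes finite values exactly on $W^{1,p}$ and satisfies there the same two-sided $p$-growth bound as $F_\eps$; in particular $F_0(\cdot,W)$ is weakly lower semicontinuous on $W^{1,p}$ and is local in the sense that $F_0(u,W)$ depends only on $u|_W$.

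Next I would upgrade convergence from $\mathcal{R}$ to all of $\mathcal{A}(\Omega)$ by an inner regularization. Define
\[ F_0(u,U) := \sup\bigl\{ F_0(u,W) : W \in \mathcal{R},\ W \Subset U \bigr\} \]
for $u \in W^{1,p}(\Omega;\R^m)$ and $U \in \mathcal{A}(\Omega)$. The crux is to show that $F_0(u,\cdot)$ is the trace on $\mathcal{A}(\Omega)$ of a Borel measure: one needs superadditivity on disjoint open sets (which is essentially immediate from the $\liminf$-inequality on disjoint pieces), inner regularity (built into the definition), and, above all, subadditivity. Subadditivity is established via the fundamental estimate: given $U',U'' \Subset U$ and recovery sequences on $U'$ and $U''$, one glues them together by means of a cut-off function concentrated on an overlap strip, using the $p$-growth from above to make the interfacial contribution arbitrarily small. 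This is the main technical obstacle and is precisely where the standard $p$-growth from above enters; the Gamma-compactness argument would not work with a mere lower bound $-\beta$ (which is exactly why Proposition~\ref{prop:addition-trick} is needed in the G\r{a}rding setting). Having verified the measure property, one then checks on elements of $\mathcal{R}$ that the inner-regularized $F_0$ agrees with the original $\Gamma$-limit, and uses a further diagonal/cut-off argument to conclude that $\Gamma(L^p)\text{-}\lim_{k \to \infty} F_{\eps_{j_k}}(\cdot,U) = F_0(\cdot,U)$ for every $U \in \mathcal{A}(\Omega)$.

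Finally, the integral representation. The functional $F_0$ now satisfies the hypotheses of the Buttazzo--Dal Maso representation theorem: it is local, lower semicontinuous in $u$ with respect to weak $W^{1,p}$-convergence, its set-function $F_0(u,\cdot)$ is the restriction of a Borel measure, and it obeys the standard $p$-growth bound. Hence there exists a Carath\'eodory integrand $\phi : \Omega \times \M{m}{n} \to \R$, inheriting the same growth bounds, such that $F_0(u,U) = \int_U \phi(x,\nabla u(x))\,dx$ for every $u \in W^{1,p}(\Omega;\R^m)$ and every $U \in \mathcal{A}(\Omega)$. (As a by-product $\phi(x,\cdot)$ is quasiconvex for a.e.\ $x$, since it is the density of a sequentially weakly lower semicontinuous integral functional.) Extending $F_0(\cdot,U)$ by $+\infty$ on $L^p(\Omega;\R^m) \setminus W^{1,p}(\Omega;\R^m)$ yields the claimed representation.
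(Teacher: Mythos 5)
The paper does not prove this result itself; it states it as known and cites Braides--Defranceschi, Theorem~12.5. Your sketch faithfully reproduces the standard argument from that reference: $\Gamma$-compactness on a countable base of open sets, the De~Giorgi--Letta criterion with the fundamental estimate supplying subadditivity, and an integral representation theorem of Buttazzo--Dal~Maso type yielding the Carath\'eodory density. So this is essentially the same (cited) approach, and the outline is sound.

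One small caveat on ordering: you assert that locality of $F_0(\cdot,W)$ (dependence on $u$ only through $u|_W$) follows ``immediately'' from the growth bounds. It does not: since recovery sequences converge in $L^p(\Omega;\R^m)$, the $\Gamma$-limit on $W$ could a priori see information outside $W$, and one must first modify recovery sequences near $\partial W$ using a cut-off. This is precisely the fundamental estimate again, so locality belongs logically after that step rather than as a free consequence of coercivity.
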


If we have a constant sequence, i.e., $ F_{j} = F $ for all $ j \in \N $, then $ \Gamma \text{-} \lim_{j} F_{j} = \lsc F $
where $ \lsc $ stands for lower semicontinuous envelope (in the metric $d$). In our case the envelope may be determined by
\begin{theo}[{\cite[Statement III.7]{AcerbiFusco}}]\label{theo:lsc}
Let $ f: \R^{n} \times \M{m}{n} \to \R $ be a Carath\'{e}odory function which
satisfies for some $ p \ge 1 $ and $ \beta > 0 $
\[ 0 \le f(x,X) \le \beta( 1 + |X|^{p} ) \]
for almost all $ x \in \R^{n} $ and all $ X \in \M{m}{n} $. Define 
\[ F( u , U ) := \int_{U} f( x , \nabla u(x) ) \, dx \]
for $ u \in W^{1,p}( \R^{n} ; \R^{m} ) $ and $ U \in {\cal A}( \R^{n} ) $.
Then the sequentially weakly lower semicontinuous envelope of $ F( \cdot , U ) $ for every $ U \in {\cal A}( \R^{n} ) $ is 
\[ \swlsc F( u , U ) = \int_{U} f^{\qc}( x , \nabla u(x) ) \, dx. \]
\end{theo}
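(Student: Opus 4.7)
The plan is to prove the two-sided inequality
\[ \swlsc F(u,U) = \int_U f^{\qc}(x,\nabla u(x))\,dx \]
for every $u \in W^{1,p}(\R^{n};\R^{m})$ and $U\in\mathcal{A}(\R^{n})$ by separately bounding the envelope from above and from below. Throughout, write $G(u,U):=\int_U f^{\qc}(x,\nabla u)\,dx$.

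For the easy direction $G(u,U) \le \swlsc F(u,U)$, I would first verify that $f^{\qc}$ is itself a Carath\'{e}odory function with the same bounds $0 \le f^{\qc}(x,X) \le \beta(1+|X|^{p})$. Measurability in $x$ follows from writing $f^{\qc}(x,X)$ as a countable infimum using rational test functions in the cell formula, while continuity in $X$ is automatic for quasiconvex functions with $p$-growth. The classical sequential weak lower semicontinuity theorem for quasiconvex integrands with $p$-growth (Acerbi--Fusco, Marcellini) then gives that $G(\cdot,U)$ is swlsc on $W^{1,p}(U;\R^{m})$. Since $f^{\qc}\le f$ pointwise, $G(\cdot,U)$ is a swlsc minorant of $F(\cdot,U)$ and hence $G(u,U) \le \swlsc F(u,U)$.

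For the converse $\swlsc F(u,U) \le G(u,U)$ I must construct a recovery sequence $u_{k}\weakly u$ in $W^{1,p}$ with $\limsup_{k} F(u_{k},U) \le G(u,U)$. The starting point is the cell formula
\[ f^{\qc}(x,X) = \inf\bigl\{\dashint_{Q} f(x,X+\nabla\varphi(y))\,dy : \varphi \in W^{1,\infty}_{0}(Q;\R^{m})\bigr\}, \]
valid pointwise in $x$ for any unit cube $Q$. I would first approximate $u$ strongly in $W^{1,p}$ by piecewise affine functions $u^{(\nu)}$ on a fine cubical partition of $U$, so that on each cube $Q_{i}^{(\nu)}$ the gradient equals a constant $X_{i}^{(\nu)}$. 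On each such cube a measurable selection combined with the cell formula produces correctors $\varphi_{i}^{(\nu),k} \in W^{1,\infty}_{0}(Q_{i}^{(\nu)};\R^{m})$ with $\varphi_{i}^{(\nu),k}\weakly 0$ in $W^{1,p}_{0}$ and
\[ \int_{Q_{i}^{(\nu)}} f(x,X_{i}^{(\nu)}+\nabla\varphi_{i}^{(\nu),k}(x))\,dx \longrightarrow \int_{Q_{i}^{(\nu)}} f^{\qc}(x,X_{i}^{(\nu)})\,dx. \]
Since the correctors vanish on $\partial Q_{i}^{(\nu)}$ they glue to a global $u_{k}^{(\nu)}$ satisfying $\limsup_{k} F(u_{k}^{(\nu)},U) \le G(u^{(\nu)},U)$. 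A standard diagonal extraction together with the continuity of $v \mapsto \int_{U} f^{\qc}(\cdot,\nabla v)\,dx$ along $\nabla u^{(\nu)}\to \nabla u$ in $L^{p}$ (via the $p$-growth bound and Vitali/dominated convergence applied to $f^{\qc}$) then yields a recovery sequence for $u$ itself.

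The main obstacle is the measurable $x$-dependence in both the density and the cell-problem near-minimizers. The cell formula only furnishes a minimizer for each \emph{fixed} $x$, so passing from the pointwise identity to the integrated asymptotics above requires a jointly measurable choice of near-optimal correctors on $Q_{i}^{(\nu)}\times Q_{i}^{(\nu)}$ and a Scorza--Dragoni or truncation-based localization that realizes the envelope simultaneously at almost every $x\in Q_{i}^{(\nu)}$. This is the heart of the Acerbi--Fusco argument; once the correctors on affine pieces are constructed, the diagonal passage in $\nu$ and $k$ only uses the upper $p$-growth and standard equi-integrability.
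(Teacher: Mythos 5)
The paper does not prove Theorem \ref{theo:lsc}; it imports it verbatim as \cite[Statement III.7]{AcerbiFusco}. There is therefore no in-paper proof to compare against, and your plan is exactly a sketch of the Acerbi--Fusco argument: the lower bound $G\le\swlsc F$ from the swlsc theorem for quasiconvex integrands of $p$-growth applied to the Carath\'{e}odory minorant $f^{\qc}$, and the upper bound from the cell formula, piecewise-affine approximation, local correctors glued across the partition, and a diagonal argument. You also correctly identify the genuine difficulty --- the cell formula is pointwise in the frozen variable $x$, so producing a single corrector that realizes the envelope in the $x$-integral requires a Scorza--Dragoni style localization (continuity of $f$, hence near-constancy of the optimal test function, on a large compact set of $x$'s) rather than a plain pointwise application.

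Two small points worth sharpening if you were to write this out in full. First, ``measurable selection'' is not quite the right tool for Step~2: you do not need a measurable map $x\mapsto\varphi_x$; you need finitely many frozen-$x$ correctors on a fine sub-partition of each $Q_i^{(\nu)}$, with uniform continuity of $f$ on a compact set (Scorza--Dragoni) controlling the error between $f(x,\cdot)$ and $f(x_{\mathrm{center}},\cdot)$ --- the ``riffling'' trick --- while the exceptional set contributes $O(\beta(1+R^p))\cdot(\text{small measure})$ via the upper growth bound. Second, in the diagonal step you must ensure $u^{(\nu)}_{k(\nu)}\weakly u$; this is fine because the candidate sequence is $W^{1,p}$-bounded so weak convergence is metrizable on that ball, but it deserves a sentence since you cannot diagonalize along a non-metrizable topology. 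Neither issue is a gap in the plan --- both are exactly where the Acerbi--Fusco argument spends its effort --- but your present phrasing makes Step~2 look like an appeal to a selection theorem when it is really a uniform-continuity covering argument.
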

In particular, if $F(\cdot, U)$ satisfies a $p$-G{\r a}rding inequality as in Definition \ref{definition:Garding}, then its $L^p$-lower semicontinuous envelope is $\swlsc F( \cdot , U )$ extended by $+ \infty$ to $L^p$.

\subsection{Auxiliary results}

For easy reference we state here a number of auxiliary results that were crucial in our analysis. 

\subsubsection*{Equiintegrable modifications}

The following equiintegrability result of Fonseca, M{\"u}ller and Pedregal is crucial in the proof of our main closure theorem \ref{theo:Gamma-closure-single}
\begin{lemma}[see {\cite[Lemma 1.2]{FonsecaMuellerPedregal}} and {\cite[Lemma 8.3]{Pedregal}}]\label{lemma:equi-int}
Let $\Omega \subset \R^n$ be an open, bounded set and let $ (u_i)_{ i \in \N } $ be a bounded sequence in $W^{1, p}(\Omega; \R^m)$, $1 < p < \infty$. 
There exists a subsequence $ (u_{i_k})_{ k \in \N } $ and and a sequence $ (v_k)_{ k \in \N } \subset W^{1, p}(\Omega; \R^m) $ such that 
\[ \lim_{k \to \infty} | \{ \nabla v_k \ne \nabla u_{i_k} \} \cup \{ v_k \ne u_{i_k} \} | = 0   \] 
and $( |\nabla v_k|^p )_{ k \in \N } $ is equi-integrable. Moreover, if $u_i \weakly u $ in $W^{1, p}(\Omega; \R^m)$, then the $v_k$ can be chosen in such a way that $v_k = u$ on $\partial \Omega$ and $v_k \weakly u$ in $W^{1, p}(\Omega; \R^m)$. 
\end{lemma}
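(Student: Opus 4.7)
I would construct each $v_k$ as a Lipschitz truncation of $u_{i_k}$ based on the Hardy-Littlewood maximal function $M$, with the truncation level tuned via a diagonal argument combined with Chacon's biting lemma to secure equi-integrability. This is the standard Acerbi-Fusco approach at the heart of Fonseca-M\"uller-Pedregal.

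By reflexivity of $W^{1,p}(\Omega;\R^m)$ I first extract a subsequence (not relabelled) converging weakly to some $u$; in the \emph{moreover}-case this is the prescribed weak limit. After extending each $u_{i_k}$ to $\tilde u_k \in W^{1,p}(\R^n;\R^m)$ with uniformly bounded norm (multiplying by a fixed smooth cut-off supported in a large ball containing $\overline{\Omega}$, so that no boundary regularity of $\Omega$ is needed), I define for $\lambda > 0$
\[ E_k(\lambda) := \{x \in \R^n : \max(M|\tilde u_k|, M|\nabla \tilde u_k|)(x) \le \lambda\}. \]
A classical Calder\'on-Zygmund estimate shows that $\tilde u_k|_{E_k(\lambda)}$ is $C\lambda$-Lipschitz, hence extendable by McShane to $w_k^{\lambda} \in W^{1,\infty}(\R^n;\R^m)$ satisfying $|w_k^\lambda|,|\nabla w_k^\lambda| \le C\lambda$ a.e., with $w_k^\lambda = \tilde u_k$ and $\nabla w_k^\lambda = \nabla \tilde u_k$ almost everywhere on $E_k(\lambda)$. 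The strong $(p,p)$ bound for $M$ (here using $p > 1$) yields $|\R^n \setminus E_k(\lambda)| \le C\lambda^{-p}$ uniformly in $k$.

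To arrange equi-integrability, I apply Chacon's biting lemma to the $L^1$-bounded sequence $(|\nabla u_{i_k}|^p)_k$: after further subsequence extraction, there exist measurable $F_k \subset \Omega$ with $|F_k| \to 0$ such that $(|\nabla u_{i_k}|^p \chi_{\Omega \setminus F_k})_k$ is equi-integrable. Choose $\lambda_k \nearrow \infty$ so slowly that $\lambda_k^p |\R^n \setminus E_k(\lambda_k)| \to 0$, and set $v_k := w_k^{\lambda_k}|_\Omega$. Then $\{v_k \ne u_{i_k}\} \cup \{\nabla v_k \ne \nabla u_{i_k}\} \subset \Omega \setminus E_k(\lambda_k)$, whose measure tends to zero, and for any measurable $A \subset \Omega$
\[ \int_A |\nabla v_k|^p \, dx \le (C\lambda_k)^p |A \cap (\Omega \setminus E_k(\lambda_k))| + \int_{A \cap (\Omega \setminus F_k)} |\nabla u_{i_k}|^p \, dx + \int_{A \cap F_k} |\nabla u_{i_k}|^p \, dx, \]
in which the first summand is $o(1)$ uniformly in $A$ by the choice of $\lambda_k$, the second is uniformly small as $|A| \to 0$ by the biting property, and the third can be absorbed by shrinking $F_k$ further along a diagonal. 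For the moreover-part I would apply the same construction to $u_{i_k} - u$ to obtain $\tilde v_k \in W^{1,\infty}_0$-type functions with $\tilde v_k \to 0$ in $L^p$ and equi-integrable gradients; then $v_k := u + \eta_k \tilde v_k$ for a cut-off $\eta_k \in C_c^\infty(\Omega)$ equal to $1$ outside a thin boundary layer of volume $o(\|\nabla \eta_k\|_\infty^{-p})$ achieves $v_k = u$ on $\partial \Omega$ in the trace sense, $v_k \weakly u$ in $W^{1,p}$, and preserves both the coincidence and equi-integrability properties.

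The main obstacle is the joint selection of $\lambda_k$ so that the discarded set $\Omega \setminus E_k(\lambda_k)$ is simultaneously negligible in measure and contributes negligibly to $\int |\nabla v_k|^p$: without the biting lemma, concentration of $|\nabla u_{i_k}|^p$ cannot be ruled out, and the raw Lipschitz truncation (with $|\nabla v_k| \le C\lambda_k$ on the bad set) would fail to give equi-integrable gradients. Producing $F_k$ first and then adjusting $\lambda_k$ to it is the technical heart of the argument.
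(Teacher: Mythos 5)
The paper does not prove this lemma; it is quoted verbatim from Fonseca--M\"uller--Pedregal and from Pedregal's book, so there is no internal proof to compare against. Your sketch follows what is indeed the standard route in those references (Acerbi--Fusco Lipschitz truncation via the maximal function, combined with Chacon's biting lemma to tune the truncation levels), so the plan is the right one. Two of your steps, however, are stated in a way that does not work as written.

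First, the extension to $\R^n$. Multiplying $u_{i_k} \in W^{1,p}(\Omega;\R^m)$ by a smooth cut-off supported in a ball containing $\overline{\Omega}$ does not produce a function in $W^{1,p}(\R^n;\R^m)$: the product is still only defined on $\Omega$, and without boundary regularity there is no extension operator. The correct device --- which you do use in the \emph{moreover}-part --- is to take cut-offs $\theta_\ell \in C^\infty_c(\Omega)$ equal to $1$ on the inner set $\{\dist(\cdot,\partial\Omega) > 1/\ell\}$, work with $\theta_\ell(u_{i_k}-u)$ (which lies in $W^{1,p}_0(\Omega)$ and extends by zero to $\R^n$), and diagonalize over $\ell$ as well as over the truncation level; the first part of the lemma is then recovered by adding $u$ back and shrinking $\{\theta_\ell < 1\}$. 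You should fold this into the first part of your argument rather than treating it as an afterthought.

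Second, the equi-integrability bookkeeping. In your display, on $A \cap E_k(\lambda_k)$ one has $\nabla v_k = \nabla u_{i_k}$ pointwise, so you should split $\int_{A\cap E_k}|\nabla u_{i_k}|^p$ into the parts over $\Omega\setminus F_k$ and over $F_k$. The part over $\Omega\setminus F_k$ is controlled by the biting equi-integrability, uniformly in $k$. For the part over $F_k$, you should not "shrink $F_k$ further" --- the biting sets are fixed and shrinking them can destroy equi-integrability. Instead, use that on $E_k(\lambda_k)$ the Lebesgue differentiation theorem gives $|\nabla u_{i_k}| \le M|\nabla\tilde u_k| \le \lambda_k$ a.e., so $\int_{A\cap E_k\cap F_k}|\nabla u_{i_k}|^p \le \lambda_k^p|F_k|$; choosing $\lambda_k\nearrow\infty$ so slowly that also $\lambda_k^p|F_k|\to 0$ closes the estimate. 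Together with the diagonal choice making $\lambda_k^p|\{M(\cdot)>\lambda_k\}|\to 0$ (which for fixed $k$ follows from the refined weak-type bound $\lambda^p|\{Mf>\lambda\}|\le C\int_{\{|f|>\lambda/2\}}|f|^p\,dx$), this gives the claimed equi-integrability. With these two repairs your proof matches the literature argument.
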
 

\subsubsection*{Geometric rigidity}

The following geometric rigidity theorem proved in \cite{FrieseckeJamesMueller} (and extended to general $p$ in \cite{ContiSchweizer}) is a key step in the application of our abstract results to elasticity theory. 
\begin{theo}\label{theo:geometric-rigidity}
Suppose that $\Omega \subset \R^n$ is a bounded domain with Lipschitz boundary, $1 < p < \infty$. Then there exists a constant $C > 0$ such that for all $u \in W^{1, p}(\Omega; \R^n)$ there is an $R \in {\rm SO}(n)$ with 
\[ \| \nabla u - R \|_{L^p(\Omega; \M{n}{n})} 
   \le C \| \dist(\nabla u, {\rm SO}(n)) \|_{L^p(\Omega; \M{n}{n})}. \] 
\end{theo}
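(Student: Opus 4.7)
The plan is to follow the strategy of Friesecke, James and M\"uller for the case $p = 2$ and then extend to general $p \in (1, \infty)$ via a maximal function argument in the spirit of Conti--Schweizer. The fundamental idea is to combine the classical (linear) Korn inequality with a stopping-time / Lipschitz truncation argument that reduces the problem to the regime where $\nabla u$ is already close to a single rotation in $L^\infty$, where linearization becomes effective.

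First, by a standard extension and covering argument it suffices to prove the inequality on a ball $B \subset \R^n$ with constant depending only on $n$ and $p$. Set $E := \|\dist(\nabla u, {\rm SO}(n))\|_{L^p(B)}^p$. The goal is to find a single rotation $R \in {\rm SO}(n)$ and a decomposition of $B$ into a ``good'' set $G$, where $\nabla u$ is close to $R$ in $L^\infty$ after a controlled truncation, and a ``bad'' set whose measure and $p$-energy are absorbed into $E$. On $G$, after the change of variables $v(x) := R^T u(x) - x$, the deviation $\nabla v = R^T \nabla u - I$ is small, and a Taylor expansion of the smooth manifold ${\rm SO}(n)$ around $I$ (whose tangent space is the skew-symmetric matrices) shows that $\dist(\nabla u, {\rm SO}(n))$ is comparable to $|(\nabla v)_{\sm}|$ on $G$. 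Applying the linear Korn inequality to $v$ then yields
\[ \|\nabla u - R\|_{L^p(G)} = \|\nabla v\|_{L^p(G)}
   \le C \|(\nabla v)_{\sm}\|_{L^p(G)}
   \le C \|\dist(\nabla u, {\rm SO}(n))\|_{L^p(G)}. \]

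The main obstacle, and the heart of the proof, is the simultaneous selection of $R$ and the construction of the good set via a Calder\'on--Zygmund / stopping-time argument. Starting from a coarse choice of $R$ (for instance, a minimizer of $X \mapsto \int_B |\nabla u - X|^p$ over ${\rm SO}(n)$), one performs a dyadic decomposition of $B$, stopping at cubes $Q$ where either the averaged distortion $\dashint_Q \dist(\nabla u, {\rm SO}(n))^p$ is a fixed fraction of $\dashint_Q |\nabla u - R|^p$ (bad cubes, whose contribution is directly controlled by $E$) or the oscillation of $\nabla u$ is already small enough to apply the local linearization (good cubes). On good cubes one may iterate with a locally updated rotation $R_Q$; the decisive technical point is to show that the family $(R_Q)$ is sufficiently close to $R$ in a summable sense so that the local estimates patch into a global one without loss of geometric constants.

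For general $p \neq 2$, the linear Korn inequality is still available, but the truncation step must be carried out in $L^p$ via the Hardy--Littlewood maximal function $\mathcal{M}$: the good set is chosen as $\{\mathcal{M}(\dist(\nabla u, {\rm SO}(n))^p) \le \lambda\}$ for a suitable threshold $\lambda$, and the Hardy--Littlewood--Wiener inequality controls the measure of its complement by $C \lambda^{-1} E$. Combining this with an Acerbi--Fusco type Lipschitz truncation yields a modification $\tilde u$ with $\|\nabla \tilde u\|_{L^\infty} \le C \lambda^{1/p}$ coinciding with $u$ on the good set; the linearization argument sketched above applies to $\tilde u$, and optimizing in $\lambda$ gives the asserted bound. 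The main obstacle throughout is precisely the nonlinear, non-convex geometry of ${\rm SO}(n)$, which prevents a direct application of Korn's inequality and forces the iterative/stopping-time machinery described above.
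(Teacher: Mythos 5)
The paper does not contain a proof of this theorem: it is quoted as a known result, attributed to Friesecke, James and M\"uller \cite{FrieseckeJamesMueller} for the case $p=2$ and to Conti and Schweizer \cite{ContiSchweizer} for the extension to general $p \in (1,\infty)$. So there is no argument in the paper for your sketch to be compared with; the paper treats Theorem \ref{theo:geometric-rigidity} as an imported black box.

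As a stand-alone sketch of the Friesecke--James--M\"uller argument, what you write correctly identifies the ingredients (linearization of ${\rm SO}(n)$ at a rotation plus the classical Korn inequality, Hardy--Littlewood maximal function and Acerbi--Fusco-type Lipschitz truncation to force an $L^\infty$ bound on $\nabla u$, and a covering argument to pass from a cube to a Lipschitz domain), but it does not constitute a proof. The step you yourself flag as ``the decisive technical point'' --- that the locally selected rotations $R_Q$ agree with a single global rotation up to errors summable against $\|\dist(\nabla u,{\rm SO}(n))\|_{L^p}^p$ --- is asserted rather than established, and this is precisely where all the difficulty lies. The dyadic Calder\'on--Zygmund stopping-time scheme you propose is also not how FJM proceed; they establish the estimate on a cube by a contradiction/compactness argument combined with the truncation lemma and then patch local cube estimates on a Lipschitz domain using a weighted Poincar\'e inequality on a Whitney-type chain, not by a CZ decomposition with updated rotations. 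Finally, the closing remark that one should ``optimize in $\lambda$'' does not produce the theorem by itself: after truncation one still only controls $\nabla \tilde u$ near a rotation \emph{locally}, and the measure of the bad set is controlled by $\lambda^{-1}E$, which enters multiplicatively with the pointwise bound $\lambda$; making this scheme close requires exactly the patching lemma that was skipped. In short, the sketch is a fair road map to the FJM/Conti--Schweizer proof but it omits the two genuinely hard steps (the quantitative selection of one global rotation and the chain/patching argument on a Lipschitz domain), so it has real gaps.
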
 

This theorem can be seen as a nonlinear variant of Korn's inequality, where instead of rotations $R$ and the distance from ${\rm SO}(n)$ one has an analogous estimate for the distance to the set $\M{n}{n}_{\skw}$ of skew symmetric matrices in terms of a single matrix in $\M{n}{n}_{\skw}$. Another variant of Korn's inequality states that 
\begin{theo}\label{theo:Korn}
Suppose that $\Omega \subset \R^n$ is a bounded domain with Lipschitz boundary, $1 < p < \infty$. Then there exists a constants $C > 0$ such that for all $u \in W^{1, p}(\Omega; \R^n)$ 
\[ \| \nabla u \|_{L^p(\Omega; \M{n}{n})} 
   \le C \big( \| e(u) \|_{L^p(\Omega; \M{n}{n})} 
      + \| u \|_{L^p(\Omega; \M{n}{n})} \big), \] 
where $e(u) = \frac{(\nabla u)^T + \nabla u}{2}$. 
\end{theo}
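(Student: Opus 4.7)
The plan is to prove the second Korn inequality via the classical Nečas--Lions route, relying on a distributional identity that recovers $\nabla u$ from $e(u)$ up to lower-order terms. The starting point is the algebraic identity
\[
  \partial_j \partial_i u_k
  = \partial_j e_{ik}(u) + \partial_i e_{jk}(u) - \partial_k e_{ij}(u),
\]
valid in the sense of distributions for $u \in W^{1,p}(\Omega; \R^n)$. Thus each second-order partial derivative of $u$ lies in $W^{-1,p}(\Omega)$ with norm controlled by $\|e(u)\|_{L^p(\Omega)}$.

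First, I would record the Lions/Nečas lemma on bounded Lipschitz domains: if $T \in \mathcal{D}'(\Omega)$ satisfies $T \in W^{-1,p}(\Omega)$ and $\nabla T \in W^{-1,p}(\Omega; \R^n)$, then in fact $T \in L^p(\Omega)$ with
\[
  \|T\|_{L^p(\Omega)} \le C\bigl( \|T\|_{W^{-1,p}(\Omega)} + \|\nabla T\|_{W^{-1,p}(\Omega; \R^n)} \bigr),
\]
the constant $C$ depending only on $\Omega$ and $p$. This is the analytic core and is the main obstacle: its proof requires localization, a partition of unity, flattening the boundary via bi-Lipschitz charts (where Lipschitz regularity of $\partial\Omega$ enters crucially), and $L^p$-boundedness of singular integral operators of Calderón--Zygmund type (or, equivalently, $L^p$-solvability of the divergence equation with zero boundary data). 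I would invoke it as a cited tool rather than reprove it.

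Next I would apply this lemma coordinatewise to $T = \partial_i u_k$ for each pair $(i,k)$. By construction $\partial_i u_k \in L^p(\Omega) \subset W^{-1,p}(\Omega)$ trivially with $\|\partial_i u_k\|_{W^{-1,p}} \le C \|u\|_{L^p}$ (integration by parts against a test function and the Poincaré--Sobolev embedding), while the displayed identity yields $\|\nabla(\partial_i u_k)\|_{W^{-1,p}} \le C \|e(u)\|_{L^p}$. The Lions lemma then gives
\[
  \|\partial_i u_k\|_{L^p(\Omega)}
  \le C\bigl( \|u\|_{L^p(\Omega)} + \|e(u)\|_{L^p(\Omega; \M{n}{n})} \bigr),
\]
and summing over $i, k$ delivers the claimed bound.

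As an alternative route, one could avoid the Nečas lemma by combining the geometric rigidity theorem (Theorem~\ref{theo:geometric-rigidity}) with a linearization: applying it to $u_t := \mathrm{id} + t u$, one obtains an $R_t \in \mathrm{SO}(n)$ with $\|t\nabla u + I - R_t\|_{L^p} \le C t \|e(u)\|_{L^p} + o(t)$, and extracting the leading $t$-order yields control of $\nabla u$ up to a skew-symmetric matrix, which is then absorbed using $\|u\|_{L^p}$ via a standard compactness/projection argument. This route has the advantage of being self-contained within the tools already cited in the appendix, but it is technically heavier than the direct Nečas--Lions argument, which is why I would present the former as the primary proof.
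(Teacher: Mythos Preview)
The paper does not supply a proof of this theorem at all: Korn's inequality is recorded in the appendix under ``Auxiliary results'' purely for reference, with no argument given. So there is no ``paper's own proof'' to compare against; your proposal is supplying content the authors deliberately left to the literature.

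Your primary route via the Ne\v{c}as--Lions lemma is the standard textbook proof and is correct as outlined. The identity $\partial_j\partial_i u_k = \partial_j e_{ik}(u) + \partial_i e_{jk}(u) - \partial_k e_{ij}(u)$ together with the negative-norm lemma on Lipschitz domains is exactly how one usually establishes the second Korn inequality for $1<p<\infty$; your identification of the Lions lemma as the analytic core, and your decision to cite rather than reprove it, are both appropriate.

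A word of caution on your alternative route via geometric rigidity. The expansion $\dist(I + t\nabla u, {\rm SO}(n)) = t|e(u)| + O(t^2|\nabla u|^2)$ is only valid pointwise where $|t\nabla u|$ is small, so the claimed bound $\|t\nabla u + I - R_t\|_{L^p} \le Ct\|e(u)\|_{L^p} + o(t)$ does not follow directly for a general $u\in W^{1,p}$; you would first need to argue by density from $u\in W^{1,\infty}$ (or truncate $\nabla u$). More importantly, within this paper the logic runs the other way: the proof of Theorem~\ref{theo:Korn-nonlinear} \emph{uses} Korn's inequality in its final step, so presenting Korn as a consequence of the rigidity machinery would invert the paper's dependency structure. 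If you want a self-contained appendix, the Ne\v{c}as--Lions argument is the cleaner choice.
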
 
(Note that $\dist(X, \M{n}{n}_{\skw}) = |X_{\sm}|$.) Similarly, there is also an analogous version of the geometric rigidity estimate. 
\begin{theo}\label{theo:Korn-nonlinear}
Let $ \Omega \subset \R^{n} $ be a bounded Lipschitz domain and $ 1 < p < \infty $. There exists a constant 
$ C $, depending only on $\Omega$ and $p$, such that for every $ u \in W^{1,p}( \Omega , \R^{n} ) $
\[ \| \nabla u \|_{ L^{p}( \Omega , \M{n}{n} ) } 
   \le C \big( \| \dist \big( I + \nabla u , SO(n) \big) \|_{ L^{p}( \Omega ) } 
   + \| u \|_{ L^{p}( \Omega , \R^{n} ) } \big). \]
\end{theo}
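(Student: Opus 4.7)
The plan is to reduce to the geometric rigidity estimate of Theorem \ref{theo:geometric-rigidity} by compensating the rotation selected there with a lower bound coming from the $L^p$-norm of $u$. Given $u \in W^{1,p}(\Omega;\R^n)$, I would first apply Theorem \ref{theo:geometric-rigidity} to the deformation $y(x) := x + u(x)$, whose gradient is $\nabla y = I + \nabla u$. This yields a rotation $R \in \mathrm{SO}(n)$ with
\[ \| I + \nabla u - R \|_{L^p(\Omega;\M{n}{n})} \le C \| \dist(I + \nabla u,\mathrm{SO}(n)) \|_{L^p(\Omega)}. \]
Writing $\nabla u = (I + \nabla u - R) + (R - I)$ and using $\|R-I\|_{L^p(\Omega)} = |R-I|\,|\Omega|^{1/p}$, one obtains
\[ \| \nabla u \|_{L^p(\Omega;\M{n}{n})} \le C \| \dist(I + \nabla u,\mathrm{SO}(n)) \|_{L^p(\Omega)} + |R - I|\, |\Omega|^{1/p}, \]
so the whole task reduces to controlling $|R - I|$ by the right-hand side of the asserted inequality.

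To do this, I would consider the auxiliary function $w(x) := u(x) - (R-I)x$, whose gradient is exactly $\nabla w = I + \nabla u - R$. The Poincaré--Wirtinger inequality on the bounded Lipschitz domain $\Omega$ gives a constant $C_\Omega$ such that
\[ \| w - \bar{w} \|_{L^p(\Omega;\R^n)} \le C_\Omega \| \nabla w \|_{L^p(\Omega;\M{n}{n})} = C_\Omega \| I + \nabla u - R \|_{L^p(\Omega;\M{n}{n})}. \]
Since $w - \bar w = (u - \bar u) - (R - I)(x - \bar x)$, the triangle inequality then yields
\[ \| (R - I)(x - \bar x) \|_{L^p(\Omega;\R^n)} \le \| u - \bar u \|_{L^p(\Omega;\R^n)} + C_\Omega \| I + \nabla u - R \|_{L^p(\Omega;\M{n}{n})}, \]
and $\|u - \bar u\|_{L^p} \le 2\|u\|_{L^p}$ by the definition of the average.

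The remaining, and only non-routine, step is to extract $|R - I|$ from the left-hand side. For this I would invoke the finite-dimensional fact that
\[ A \longmapsto \bigl( \textstyle\int_\Omega |A(x - \bar x)|^p \, dx \bigr)^{1/p} \]
is a norm on $\M{n}{n}$: it is clearly a seminorm, and it vanishes only when $A = 0$, since $x \mapsto x - \bar x$ is not constant on the open set $\Omega$, so that $A(x - \bar x) \equiv 0$ on $\Omega$ forces $A = 0$. By equivalence of norms on $\M{n}{n}$ there is therefore a constant $c_\Omega > 0$, depending only on $\Omega$ and $p$, with $\| A(x - \bar x) \|_{L^p(\Omega;\R^n)} \ge c_\Omega |A|$ for every $A \in \M{n}{n}$. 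Applying this to $A = R - I$ and combining with the previous estimates yields $|R - I| \le C (\|u\|_{L^p} + \|\dist(I + \nabla u,\mathrm{SO}(n))\|_{L^p})$, which, together with the decomposition of $\nabla u$ above, closes the argument. The only genuine subtlety is the last equivalence-of-norms step; everything else is a direct consequence of Theorem \ref{theo:geometric-rigidity} and Poincaré--Wirtinger.
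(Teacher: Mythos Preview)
Your proof is correct, but it takes a genuinely different route from the paper's.

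The paper argues by contradiction: assuming a sequence $(u_j)$ violating the estimate with constants $j \to \infty$, it first shows $(\nabla u_j)$ is bounded in $L^p$ and $u_j \to 0$ in $L^p$, hence $u_j \rightharpoonup 0$ in $W^{1,p}$. Geometric rigidity produces rotations $Q_j$ with $\|I + \nabla u_j - Q_j\|_{L^p} \le \tfrac{C}{j}\|\nabla u_j\|_{L^p}$, which together with the weak convergence forces $Q_j \to I$. Then, using that for a rotation close to $I$ one has $|I - (Q_j)_{\sm}| \le C'|I - Q_j|^2$, the symmetric part of $I + \nabla u_j - Q_j$ is estimated from below by $\|e(u_j)\|_{L^p}$ minus a term of order $\tfrac{1}{j}\|\nabla u_j\|_{L^p}$, and this contradicts the \emph{linear} Korn inequality (Theorem \ref{theo:Korn}).

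Your argument is direct: after applying geometric rigidity, you control $|R-I|$ by Poincar\'e--Wirtinger applied to $w(x)=u(x)-(R-I)x$ and the finite-dimensional fact that $A \mapsto \|A(x-\bar x)\|_{L^p(\Omega)}$ is a norm on $\M{n}{n}$. This avoids any contradiction argument, any passage to subsequences, the quadratic estimate on $(Q_j)_{\sm}$, and---most notably---any use of the linear Korn inequality. What your approach buys is a shorter, fully constructive proof with transparent constants; what the paper's approach buys is an explicit link showing that the nonlinear estimate is in a precise sense a consequence of the linear one combined with rigidity.
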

Although this Corollary to Theorem \ref{theo:geometric-rigidity} appears to be well-known, a proof is hard to find in the literature, therefore we include it here.

\begin{proof}
We will show this result by contradiction. To this end, let us suppose that there exists
such sequence $ ( u_{j} )_{ j \in \N } \subset W^{1,p}( \Omega; \R^{n} ) $ that
\[ \| \nabla u_{j} \|_{ L^{p} } 
> j 
\bigg( \| \dist \big( I + \nabla u_{j} , SO(n) \big) \|_{ L^{p} } 
+ \| u_{j} \|_{ L^{p} } \bigg). \]
Since, for every $ X \in \M{n}{n} $, $| X | \le \dist \big( I + X , SO(n) \big) + 2 \sqrt{n}$, we have for any $ u \in W^{1,p}( \Omega ; \R^{n} ) $
\[ \| \nabla u \|_{ L^{p} } \le 
\| \dist \big( I + \nabla u , SO(n) \big) \|_{ L^{p} } 
+ 2 \sqrt{n} | \Omega |^{1/p} . \]
Applying the latter inequality to our sequence we obtain
\[ 2 \sqrt{n} | \Omega |^{1/p}
   > \big( 1 - \tfrac{1}{j} \big) \| \nabla u_{j} \|_{ L^{p} } + \| u_{j} \|_{ L^{p} }. \]
Hence, $ ( \nabla u_{j} )_{ j \in \N } $ is bounded in $ L^{p}( \Omega; \M{n}{n} ) $. From our assumption it follows immediately that $ u_{j} \to 0 $ in $ L^{p}( \Omega; \R^{n} ) $ and consequently $u_{j} \weakly 0$ in $W^{1,p}( \Omega; \R^{n} )$. Furthermore, by Theorem \ref{theo:geometric-rigidity} there exists a sequence $ ( Q_{j} )_{ j \in \N } \subset SO(n) $, such that
\[ \| I + \nabla u_{j} - Q_{j} \|_{ L^{p} }
\le C \ \| \dist \big( I + \nabla u_{j} , SO(n) \big) \|_{ L^{p} } 
\le \tfrac{ C }{j} \ \| \nabla u_{j} \|_{ L^{p} }. \]
Hence $ I + \nabla u_{j} - Q_{j} \to 0 $ in $ L^{p}( \Omega; \M{n}{n} ) $. This, together with
$ \nabla u_{j} \weakly 0 $ in $ L^{p}( \Omega; \M{n}{n} ) $, yields $ I - Q_{j} \weakly 0 $ in $ L^{p}( \Omega; \M{n}{n} ) $.
But the latter is a sequence of constant matrices and therefore converges strongly. Therefore,
\[ u_{j} \to 0 \quad \mbox{in} \quad W^{1, p}( \Omega ; \M{n}{n} ). \]
Let us, if necessary, pass to a not-relabelled subsequence so that $\| \nabla u_{j} \|_{ L^{p} } \le \frac{1}{j}$. Since $\| I - Q_{j} \|_{L^p} \le 2C \| \nabla u_{j} \|_{ L^{p} }$, we can, at least from some $j_{0}$ on, estimate
\[ | I - ( Q_{j} )_{\sm} | \le C' \| \nabla u_{j} \|_{ L^{p} }^{2} \le \frac{ C' }{j} \| \nabla u_{j} \|_{ L^{p} }. \]
Hence,
\begin{align*} 
\frac{ C }{j} \| \nabla u_{j} \|_{ L^{p} } 
  & \ge \| I + \nabla u_{j} - Q_{j} \|_{ L^{p} } \\ 
  & \ge \| I + e(u_{j}) - ( Q_{j} )_{ \sm } \|_{ L^{p} } 
  \ge \| e(u_{j}) \|_{ L^{p} } - \frac{ C' | \Omega |^{1/p} }{j} \| \nabla u_{j} \|_{ L^{p} }.
\end{align*} 
It follows
\[ \frac{ C' | \Omega |^{1/p} + C + 1 }{j} \| \nabla u_{j} \|_{ L^{p} } 
> \| e(u)_{j} \|_{ L^{p} } + \| u_{j} \|_{ L^{p} }, \]
which contradicts Korn's inequality (cf.\ Theorem \ref{theo:Korn}). 
\end{proof}

\subsubsection*{A version of Zhang's theorem}

When remarking on the non-degeneracy of the homogenized energy functionals we made use of the following extension of a theorem of Zhang (see \cite{Zhang}) to general $p$. 

\begin{theo}\label{theo:p-Zhang}
For $ 1 < p < \infty $ it holds
\[ [ \dist^{p}( \cdot , SO(n) ) ]^{\qc} 
   \ge C_{\rm geo}^{-p} \, \dist^{p}( \cdot , SO(n) ) \]
where $ C_{\rm geo} $ is the constant from the geometric rigidity result for unit cube $(0,1)^n$ and for the exponent $p$.
\end{theo}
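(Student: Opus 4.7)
\textbf{Proof plan for Theorem \ref{theo:p-Zhang}.} The strategy is to exploit the definition of the quasiconvex envelope as an infimum over test functions on the unit cube $Q := (0,1)^n$ and then apply the geometric rigidity result of Theorem \ref{theo:geometric-rigidity} on $Q$ with exponent $p$. Recall that for a continuous function $f$ with $p$-growth, the quasiconvex envelope admits the representation
\[
  f^{\qc}(X)
  = \inf \Big\{ \int_{Q} f(X + \nabla \varphi(x)) \, dx : \varphi \in W^{1,p}_0(Q; \R^n) \Big\}.
\]
Since $\dist^p(\cdot, SO(n))$ has $p$-growth, it suffices to bound the right hand side from below by $C_{\rm geo}^{-p} \dist^p(X, SO(n))$ uniformly over admissible test functions $\varphi$.

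First I would fix $X \in \M{n}{n}$ and $\varphi \in W^{1,p}_0(Q; \R^n)$ and set $u(x) := Xx + \varphi(x)$, so that $\nabla u = X + \nabla \varphi$. Applying Theorem \ref{theo:geometric-rigidity} to $u$ on $Q$ produces a rotation $R \in SO(n)$ with
\[
  \int_Q |X + \nabla \varphi(x) - R|^p \, dx
  \le C_{\rm geo}^{\, p} \int_Q \dist^p(X + \nabla \varphi(x), SO(n)) \, dx.
\]
The next step is to use the zero boundary values of $\varphi$: this gives $\int_Q \nabla \varphi \, dx = 0$, hence
\[
  X - R = \int_Q (X + \nabla \varphi(x) - R) \, dx.
\]
Jensen's inequality (using $|Q| = 1$) then yields
\[
  \dist^p(X, SO(n))
  \le |X - R|^p
  \le \int_Q |X + \nabla \varphi(x) - R|^p \, dx
  \le C_{\rm geo}^{\, p} \int_Q \dist^p(X + \nabla \varphi(x), SO(n)) \, dx.
\]
Taking the infimum over all admissible $\varphi$ delivers the claim.

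No serious obstacle is expected: the whole argument is a two-line combination of geometric rigidity with Jensen, exactly as in Zhang's original treatment of the case $p = 2$. The only point worth a brief verification is that the representation formula for $f^{\qc}$ through $W^{1,p}_0(Q)$ test functions (as opposed to smooth or $W^{1,\infty}_0$ test functions) is available under the standard $p$-growth bound $0 \le \dist^p(X, SO(n)) \le 2^{p-1}(|X|^p + n^{p/2})$, which is a routine density argument given that both sides of the inequality above extend continuously to the $W^{1,p}_0$-closure. Finally, the constant $C_{\rm geo}$ is precisely the rigidity constant for the unit cube and exponent $p$, so no dimensional rescaling is needed.
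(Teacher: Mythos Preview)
Your proof is correct and follows essentially the same approach as the paper: apply geometric rigidity on the unit cube to obtain a rotation $R$, use Jensen's inequality together with $\int_Q \nabla\varphi = 0$ to pass from $\int_Q |X+\nabla\varphi - R|^p$ to $|X-R|^p \ge \dist^p(X, SO(n))$, and then take the infimum over test functions. The only cosmetic difference is that the paper works directly with $C^\infty_c$ test functions in the formula for the quasiconvex envelope, whereas you use $W^{1,p}_0$ test functions and note the routine density argument.
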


\begin{proof}
By Theorem \ref{theo:geometric-rigidity} for any $\varphi \in C^{\infty}_{c}((0,1)^n)$ there exists an $R \in {\rm SO}(n)$ such that
\begin{align*}
  &\int_{(0,1)^n} \dist^{p} \big( X + \nabla \varphi(x) , {\rm SO}(n) \big) \, dx 
   \ge C_{\rm geo}^{-p} \, \int_{(0,1)^n} | X + \nabla \varphi(x) - R |^{p} \ dx \\
  &\ge C_{\rm geo}^{-p} \, \bigg| \int_{(0,1)^n} \big( X + \nabla \varphi(x) - R \big) \ dx \bigg|^{p} 
   = C_{\rm geo}^{-p} | \, X - R |^{p} 
  \ge C_{\rm geo}^{-p} \, \dist^{p}( X , {\rm SO}(n) ), 
\end{align*} 
where in the second step we have applied Jensen's inequality. Consequently, 
\begin{align*}
  \big[ \dist^{p}( \cdot , {\rm SO}(n) ) \big]^{\qc}(X)
  &= \inf_{ \varphi \in C^{\i}_{c}((0,1)^n) } \int_{(0,1)^n} 
    \dist^{p} \big( X + \nabla \varphi(x) , {\rm SO}(n) \big) \, dx \\
  &\ge C_{\rm geo}^{-p} \, \dist^{p}( X , {\rm SO}(n) ). \qedhere
\end{align*} 
\end{proof}

\end{appendix}

 \typeout{References}

\end{document}